\documentclass[12pt]{amsart}

\usepackage[left=3.5cm, right=3.5cm, top=3cm]{geometry}
\usepackage{amssymb,amsmath,amsthm,enumitem,MnSymbol,array}
\usepackage{graphicx}
\graphicspath{{./images/} }
\usepackage{xcolor}
\usepackage{hyperref}
\usepackage{mathbbol}
\usepackage[font=small]{caption}
\hypersetup{
  pdftitle={Quadrangles entangled in conic nets},
  pdfauthor={Bruce Olberding and Elaine A. Walker},
  pdfsubject={Quadrangles, nets of conics, bisector fields, and entanglement},
  pdfkeywords={Planar four-point configuration, quadrangle, pencil of conics, net of conics, bisector field, entanglement, nine-point conic}
}

\def\AA{\mathsf{A}}
\def\BB{\mathsf{B}}
\def\CC{\mathsf{C}}

\def\P{{\mathbb{P}}}

\begin{document}

\newtheorem{theorem}{Theorem}[section]
\newtheorem{lemma}[theorem]{Lemma}
\newtheorem{proposition}[theorem]{Proposition}
\newtheorem{corollary}[theorem]{Corollary}
\newtheorem{problem}[theorem]{Problem}
\newtheorem{construction}[theorem]{Construction}

\theoremstyle{definition}
\newtheorem{defi}[theorem]{Definitions}
\newtheorem{definition}[theorem]{Definition}
\newtheorem{notation}[theorem]{Notation}
\newtheorem{remark}[theorem]{Remark}
\newtheorem{example}[theorem]{Example}
\newtheorem{question}[theorem]{Question}
\newtheorem{comment}[theorem]{Comment}
\newtheorem{comments}[theorem]{Comments}

\newtheorem{discussion}[theorem]{Discussion}

\renewcommand{\thedefi}{}

\long\def\alert#1{\smallskip{\hskip\parindent\vrule%
\vbox{\advance\hsize-2\parindent\hrule\smallskip\parindent.4\parindent%
\narrower\noindent#1\smallskip\hrule}\vrule\hfill}\smallskip}

\def\ff{\frak}
\def\tf{torsion-free}
\def\Spec{\mbox{\rm Spec }}
\def\Proj{\mbox{\rm Proj }}
\def\hgt{\mbox{\rm ht }}
\def\type{\mbox{type}}
\def\Hom{\mbox{Hom}}
\def\rank{\mbox{rank}}
\def\Ext{\mbox{Ext}}
\def\Tor{\mbox{Tor}}
\def\ker{\mbox{Ker }}
\def\Max{\mbox{\rm Max}}
\def\End{\mbox{\rm End}}
\def\xpd{\mbox{\rm xpd}}
\def\Ass{\mbox{\rm Ass}}
\def\emdim{\mbox{\rm emdim}}
\def\epd{\mbox{\rm epd}}
\def\repd{\mbox{\rm rpd}}
\def\ord{\mbox{\rm ord}}

\def\htt{\mbox{\rm ht}}

\def\EE{{\mathcal E}}
\def\FF{{\mathcal F}}
\def\GG{{\mathcal G}}
\def\HH{{\mathcal H}}
\def\II{{\mathcal I}}
\def\LL{{\mathcal L}}
\def\MM{{\mathcal M}}
\def\PP{{\mathsf P}}

\def\k{\mathbb{k}}

\title[Quadrangles entangled in conic nets]{Quadrangles entangled in conic nets}
\author{Bruce Olberding} 
\address{Department of Mathematical Sciences, New Mexico State University,\\ Las Cruces, NM 88003-8001}

\email{bruce@nmsu.edu}

\author{Elaine A.~Walker}
\address{Las Cruces, NM}

\email{miselaineeous@yahoo.com}

\begin{abstract}
A net of conics can remain unchanged as the four-point configuration generating it varies. We study this phenomenon over a field $\k$ of characteristic different from $2$. We allow degenerate quadrangles, consisting of four not necessarily distinct points and six joining lines, provided that three sides form a nondegenerate triangle. The net of a quadrangle is obtained by adjoining constants to the defining polynomials in the pencil generated by its pairs of opposite sides, viewed as conics. We call two quadrangles entangled when they generate the same net. We prove that entanglement is equivalent to the apparently asymmetric condition that every side of one quadrangle bisects the other quadrangle at that side's midpoint. The line-pair members of the net form a bisector field that determines the entire net, so entanglement is also equivalent to equality of bisector fields. Finally, we prove that the locus of centers of the central conics in the net, together with one nonparallel line-pair member, determines the net.

\end{abstract}

\subjclass[2020]{Primary 51N10; Secondary 51A20, 51N35}

\keywords{Planar four-point configuration, quadrangle, pencil of conics, net of conics, bisector field, nine-point conic}

\date{\today}

\maketitle

\section{Introduction}

Let $\k$ be a field of characteristic other than $2$. 
Given four points in general position in the  plane $\k^2$, there is a natural way to associate both a pencil and a net of conics to the configuration. The six lines through these four points can be grouped into three pairs of opposite sides  that can be viewed as reducible conics belonging to a common pencil of conics, namely the pencil of conics through the four points. 
 By adjoining constants to the defining polynomials of this pencil, we obtain  a three-dimensional vector space  whose projectivization is a net   that  gathers the level conics of the pencil into a single projective linear system. While the pencil recovers the four points as its base locus, the net of its level conics does not, and the four-point configuration can vary while this larger linear system remains fixed. We call two such configurations {\it entangled} if they determine the same net.  Their common net is a projective plane in the space of conics, and each four-point configuration's pencil is a projective line in that plane. The points on this line represent  the conics in the pencil. We study the geometry shared by entangled four-point configurations and ask what data determines their common net.

 Four-point configurations and their degenerations arise naturally in shape spaces and moduli problems, as in \cite{GonLop1,GonLop2,Kendall,OWManifold}, and in these contexts degenerate configurations are important as a boundary case. 
 In our approach in \cite{OWManifold} we work with quadrangles in place of four-point configurations since quadrangles, which consist of both the original four points and the six lines joining any two of them, retain the data of the lines between points and thus {allow a finer distinction among degeneracies}. We allow degenerate quadrangles in our study  \cite{OWManifold} of moduli spaces of quadrangles and do so here too, 
but as in the moduli setting we impose  a noncollapse condition   requiring that three of the six sides of the quadrangle form a nondegenerate triangle. A sequel to this article will show how entanglement is useful in the analysis of moduli spaces of quadrangles. 

Even if the quadrangle $Q$ is degenerate, we show we can associate a net to   $Q$  as above in a well-defined way. 
 The reducible conics in the net form a maximal self-bisecting family $\mathcal B(Q)$ of line pairs, in the sense that every component line bisects every pair in the family at the same bisection point. We call $\mathcal B(Q)$ the {\it bisector field} of $Q$ and show it is indeed a bisector field in the sense of 
 \cite{OWQuad,OWPencil,OWProj}; see Figure~\ref{cardioid}. We prove that the lines in the pairs in $\mathcal B(Q)$ are   the bisectors of $Q$ and that which bisectors occur together in its line pairs can be recognized directly from the six sides. More generally, Theorem~\ref{conic net criterion} shows that a conic belongs to the net of $Q$ exactly when every side of $Q$ bisects it at the side midpoint. 
 \begin{figure}[ht]
     \begin{center}
\includegraphics[width=.75\textwidth]{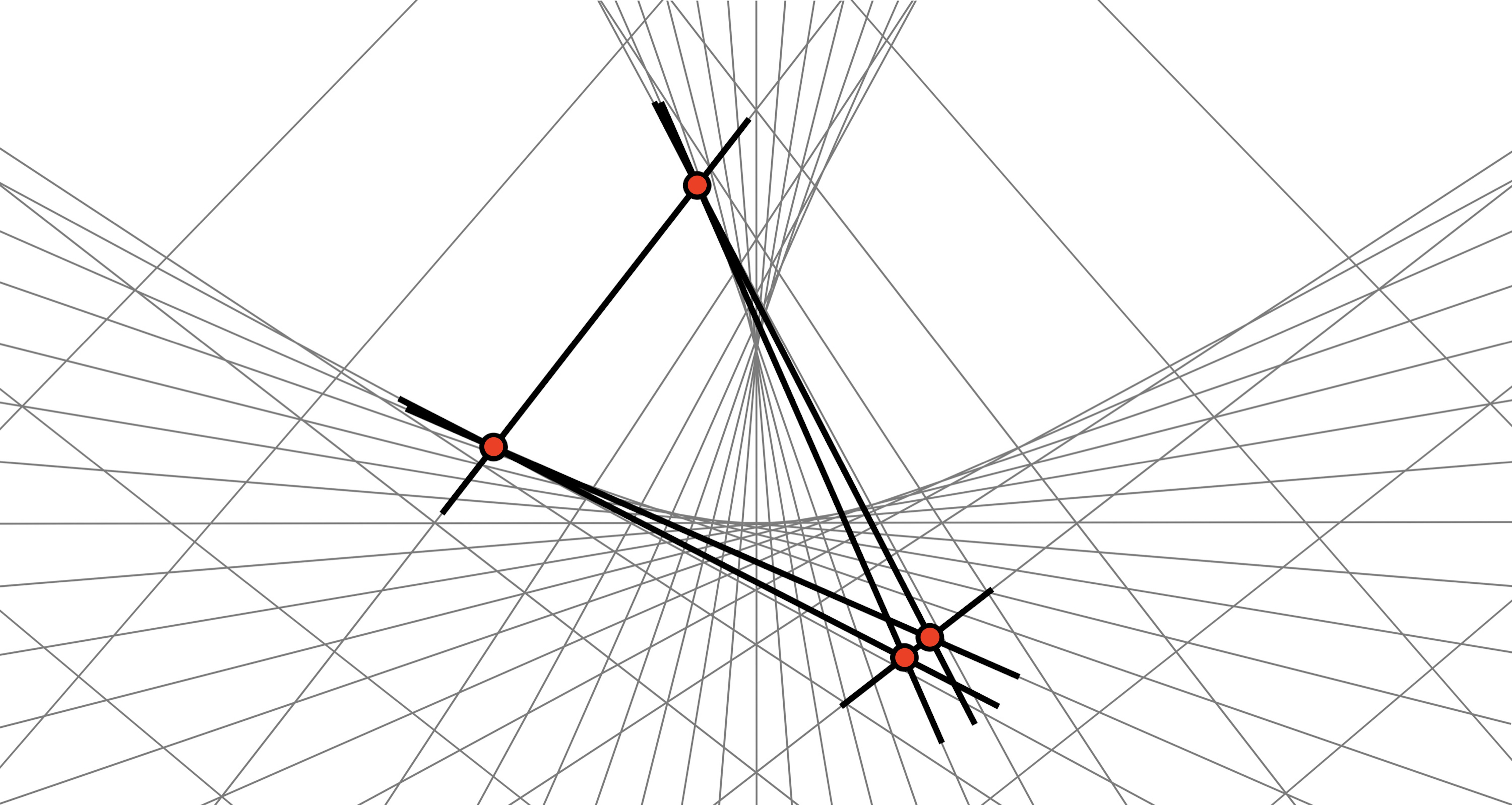}
     \end{center}
\caption{A quadrangle in its bisector field. The four red points and six thick black joining lines form the quadrangle. The light lines belong to pairs in its bisector field.}
\label{cardioid}
\end{figure}
 
We prove that two quadrangles $Q_1$ and $Q_2$ are entangled if and only if $\mathcal B(Q_1)=\mathcal B(Q_2)$, so that the line pairs in the net alone are decisive for entanglement. We also give a particularly transparent criterion for entanglement: $Q_1$ and $Q_2$ are entangled if and only if 
every side of $Q_1$ bisects $Q_2$ at its side midpoint. 
Thus, although this condition is imposed in only one direction, it is symmetric; see Figure~\ref{entangled quads}.
\begin{figure}[ht]
     \begin{center}
\includegraphics[width=.95\textwidth]{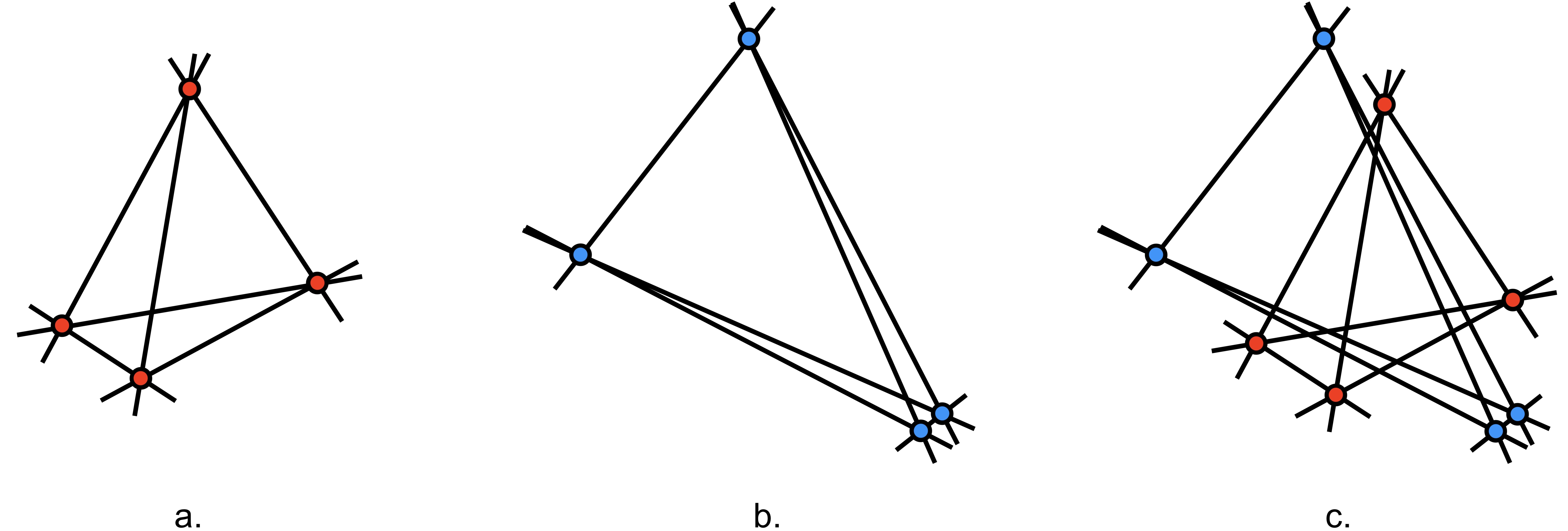}
     \end{center}
\caption{After the translation shown in (c), each side of the first quadrangle bisects the three pairs of opposite sides of the second at that side's midpoint. Corollary~\ref{entanglement bisection} implies that the  quadrangles are entangled and that the same bisection relation holds with the quadrangles interchanged.}
\label{entangled quads}
\end{figure}

 We next ask how much information is needed to recover the common net. For this purpose, we introduce a conic arising from the first derivatives of the pencil. If $F,G$ form a basis of its pencil space, then the Jacobian $$J(F,G)=(\partial_XF)(\partial_YG)-(\partial_YF)(\partial_XG)$$
defines a conic independent of the chosen basis. We prove that its $\k$-points are simultaneously the centers of the central conics in the pencil and the finite bisection points of the bisectors of $Q$. We call this set the {\it locus of $Q$} and the conic defined by $J(F,G)$ its {\it locus conic}. For a nondegenerate quadrangle, the locus conic is the classical nine-point conic. We determine its possible degenerations and show that it is centered at the centroid of $Q$.

Although the locus conic is constructed from the net rather than chosen as one of its members, it nearly determines the net.  Theorem~\ref{locus determines} shows that two quadrangles are entangled if and only if they have the same locus and share one nonparallel bisector pair. Equivalently, the locus conic together with one nonparallel line-pair  determines the entire quadrangle net. 

\smallskip

{\it Notation.} Throughout the article, $\k$ represents an arbitrary field of characteristic other than $2$, and $\P^n$ represents $n$-dimensional projective space over $\k$, with points given in homogeneous coordinates. Our focus is on configurations of lines and points in the plane $\k^2$. Unless explicitly stated otherwise, a line means an affine line in $\k^2$, and its points are understood to be its $\k$-points. Projective closures and points on the line at infinity will always be indicated explicitly.

\section{Quadrangles}\label{quadrangles section}

We loosen the classical definition of a quadrangle, which requires vertices to be in general position, to permit  degenerate quadrangles also because they arise naturally as boundary cases in moduli spaces, as in \cite{OWManifold}. However, we do not allow the quadrangles to degenerate too far. We require three of the six sides to form a nondegenerate triangle. This core of non-degeneracy is crucial for the principal results on bisectors and entanglement in this article, as well as the geometric results on moduli spaces of real quadrangles  in \cite{OWManifold}.

 \begin{definition} \label{quadrangle def} A {\it quadrangle} $Q$ is a planar configuration of four not necessarily distinct points $v_1,\ldots,v_4$, called {\it vertices}, and six indexed lines $L_{ij}$, $1 \leq i<j \leq 4$, called {\it sides}, such that each line $L_{ij}$ passes through $v_i$ and $v_j$ and  three of the sides form a nondegenerate triangle. Sides and vertices need not be distinct. 
  The {\it side midpoint} of the side $L_{ij}$ is the midpoint of $v_i$ and $v_j$, and the {\it centroid} of $Q$ is the centroid of the four vertices. Two sides $L_{ij}$ and $L_{i'j'}$ are called {\it opposite sides} if $\{i,j\} \cap \{i',j'\} = \emptyset$; otherwise, they are {\it adjacent}. A quadrangle is {\it nondegenerate} if its four  vertices are distinct and no three are collinear; otherwise it is {\it degenerate}.

 \end{definition}

Throughout the article, quadrangles are considered unlabeled.  The definition can be unpacked to make clearer what the degenerate quadrangles are, which we do in Proposition~\ref{unpack} and illustrate in Figure~\ref{degenerate quadrangles figure}. By a {\it line pair} we mean two not-necessarily-distinct lines $L$ and $L'$ considered without order, and we write this as $\{L,L'\}$, even though the ``pair'' may consist of only one line.  A variation on the proof of the proposition appears also in the real number setting of  \cite{OWManifold}, and we include it here for completeness.

 \begin{proposition} \label{unpack} \label{Ent}  Let $\AA,\BB,\CC$ be three pairs of opposite sides of a quadrangle $Q$. The quadrangle $Q$ is degenerate if and only if, up to renaming the  line pairs, one of the following conditions holds.
 \begin{enumerate}
\item[{\rm (a)}] $\AA=\CC$, the two lines in $\AA$ are distinct, and  neither line of $\AA$ belongs to $\BB$;

\item[{\rm (b)}] all three pairs $\AA,\BB,\CC$ share a line; or

\item[{\rm (c)}]  $\AA=\CC$, the two lines in $\AA$ are the same and this line does not belong to $\BB$.
 \end{enumerate}
 
 \end{proposition}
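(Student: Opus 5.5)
We prove the forward direction by a case analysis on how the four vertices fail to be in general position, and the reverse direction by showing each of (a), (b), (c) forces a coincidence among the vertices. Write the pairs as $\AA=\{L_{12},L_{34}\}$, $\BB=\{L_{13},L_{24}\}$, $\CC=\{L_{14},L_{23}\}$. The noncollapse condition says some three sides form a nondegenerate triangle. Three sides forming a triangle cannot include two opposite sides through a common point unless they are distinct; more usefully, such a triple is either the three sides through a vertex (impossible, since they are concurrent) or contains exactly one side from each of two pairs and one other. We use this triangle throughout to rule out further collapses.

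\emph{Degenerate implies (a), (b) or (c).} There are three cases.
\begin{enumerate}
\item[(i)] Three distinct collinear vertices, say $v_1,v_2,v_3$ on a line $M$. Then $L_{12}=L_{13}=L_{23}=M$, and $M$ lies in each of $\AA,\BB,\CC$, which is (b).
\item[(ii)] Exactly two vertices coincide, say $v_1=v_2$, with $v_3,v_4$ distinct from it and from each other. Then $L_{13}=L_{23}$ (the line through $v_1,v_3$) and $L_{14}=L_{24}$. So $\BB=\{L_{13},L_{24}\}=\{L_{23},L_{14}\}=\CC$. If $v_1,v_3,v_4$ are noncollinear, these two lines are distinct. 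If $L_{12}$ or $L_{34}$ equals one of them, we are in (b). Otherwise we are in (a) after renaming. If $v_1,v_3,v_4$ are collinear, case (i) gives (b).
\item[(iii)] More coincidences: $v_1=v_2$ and $v_3=v_4$, or three equal vertices. In the first situation $L_{13}=L_{14}=L_{23}=L_{24}$ is the line through the two points, so $\BB=\CC$ is a doubled line, giving (c) unless it lies in $\AA$, which is (b). If three vertices coincide, say $v_1=v_2=v_3$, then every side through $v_1$ among $L_{12},L_{13},L_{23}$ is arbitrary through $v_1$, and $L_{14},L_{24},L_{34}$ all equal the line $v_1v_4$ when $v_4\ne v_1$. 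That line then lies in all three pairs, giving (b). If all four vertices coincide, all six sides are concurrent, so no three form a nondegenerate triangle, which is excluded.
\end{enumerate}

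\emph{Each condition implies degeneracy.} Suppose $Q$ is nondegenerate. Then the six sides are distinct, because two sides sharing a vertex coincide only if three vertices are collinear, and opposite sides meet in no vertex but are distinct lines through disjoint pairs of points. Condition (b) requires some line to occur in all three pairs. That line contains a side from each pair, hence at least three distinct sides coincide, which is impossible. Conditions (a) and (c) require two pairs to be equal, again forcing two distinct sides to coincide. So nondegenerate quadrangles satisfy none of (a), (b), (c).

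The main obstacle is the bookkeeping in (ii) and (iii). When vertices coincide, the sides joining equal vertices are not determined by the vertices. We must use the triangle condition to pin down which free sides are distinct, and check that "neither line of $\AA$ belongs to $\BB$" fails exactly when we land in (b). I would handle this by fixing the nondegenerate triangle first and then listing which sides it can contain in each coincidence pattern.
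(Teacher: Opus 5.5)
Your proof is correct and follows essentially the same route as the paper. For the forward direction you split on the vertex-coincidence pattern (three distinct collinear vertices, one double vertex, two double vertices), and for the converse you use that a nondegenerate quadrangle has six distinct sides.

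The one difference is where the triangle condition is used. The paper uses it first to exclude three coincident vertices, the collinear patterns other than two double vertices, and the quadruple side lying in the remaining pair. You instead observe that (b) holds in those subcases, which are actually impossible for a quadrangle, so you need the triangle condition only to exclude four coincident vertices. As a result, your closing worry about further bookkeeping is unfounded: your cases as written already close. Your preamble classification of triples of sides is also unused, and it is inaccurate, since a triple such as $L_{12},L_{34},L_{13}$ contains two sides of one pair.
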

 
  \begin{figure}[ht] 
     \begin{center}
\includegraphics[width=.95\textwidth]{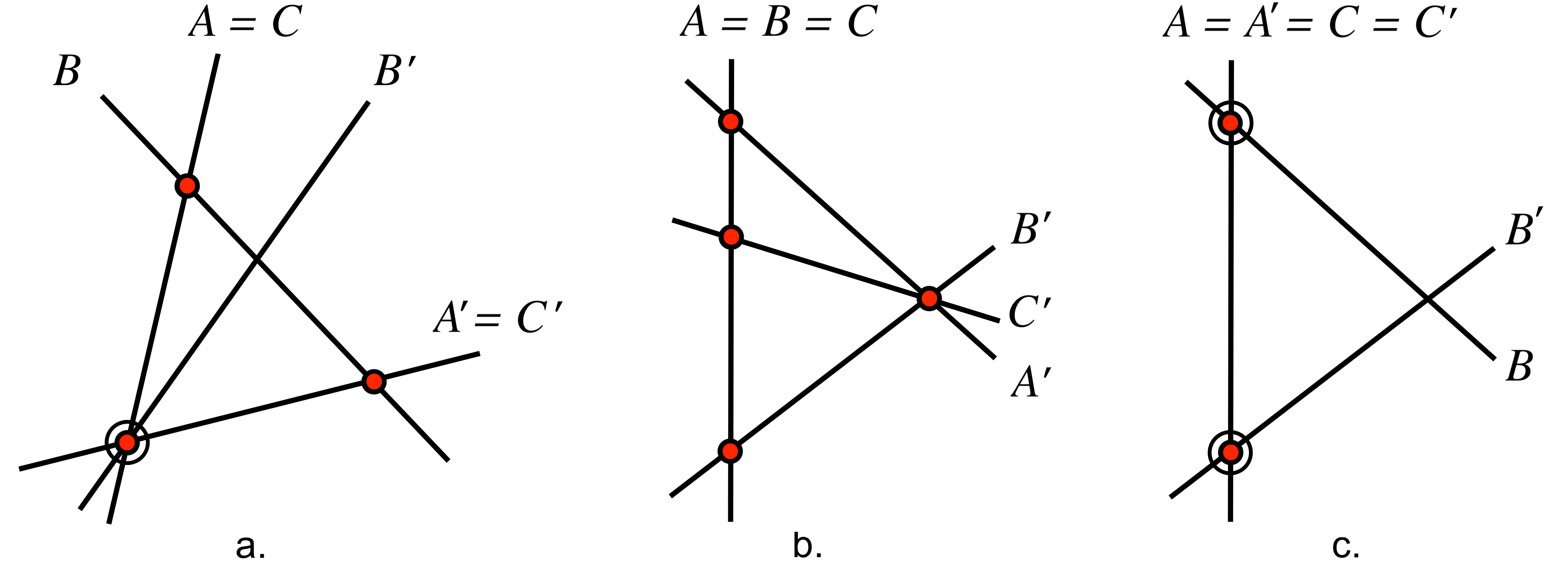}
 \end{center}
\caption{ Representative quadrangles for the cases in Proposition~\ref{unpack}: (a) a double-side quadrangle, (b) a triple-side quadrangle, and (c) a quadruple-side quadrangle.   Opposite-side pairs are labeled $A,A'$, $B,B'$, and $C,C'$, and double vertices are circled.}
{\label{degenerate quadrangles figure}}
\end{figure}

 \begin{proof} A nondegenerate quadrangle has six distinct lines, whereas each of conditions~(a), (b), and~(c) has non-distinct sides, so each condition in the proposition implies that the quadrangle is degenerate.
Conversely, suppose $Q$ is degenerate.   If all four vertices are collinear, the definition of  quadrangle excludes  four distinct vertices, at least three coincident vertices, and one double vertex together with two other distinct vertices, so in this case the vertices consist of two distinct double vertices. If the four vertices are not collinear, then neither three coincident vertices nor two double vertices can occur, since either configuration would be collinear. Hence either there is one double vertex and the other two vertices are not collinear with it, or the four vertices are distinct and exactly three are collinear. After   relabeling the vertices and the sides, we consider these three cases.

{\textsc{Case 1:}} {\it There are two distinct double vertices.}  Write $v_1=v_2=p$ and $v_3=v_4=q$, where $p\ne q$, and let $T$ be the line through $p$ and $q$. Then $L_{13}=L_{14}=L_{23}=L_{24}=T.$ Hence the pairs of opposite sides $\{L_{13},L_{24}\}$ and $\{L_{14},L_{23}\}$ both have $T$ as each of their two sides.  The remaining sides are $L_{12}$ and $L_{34}$. Since three sides form a nondegenerate triangle, neither of these lines is $T$, so~(c) holds.

{\textsc{Case 2:}} {\it There is exactly one double vertex, and the three distinct vertices are not collinear.}  Write $v_1=v_2=p$, $v_3=q$, and $v_4=r$, where $p,q,r$ are noncollinear. Let $L,M,N$ be the lines $pq,pr,qr$, respectively, and set $S=L_{12}$. Then $$ \{L_{13},L_{24}\}=\{L,M\}=\{L_{14},L_{23}\}, \:\: \{L_{12},L_{34}\}=\{S,N\}.$$ Since $p,q,r$ are noncollinear, $N \not \in \{L,M\}$. If $S=L$ or $S=M$, all three pairs of opposite sides share that line, and~(b) holds. Otherwise, neither line in  $\{L,M\}$ occurs in $\{S,N\}$, and~(a) holds.

{\textsc{Case 3:}} {\it Three vertices are distinct and collinear and the fourth vertex is not collinear with these three.} After relabeling, suppose $v_1,v_2,v_3$ lie on a line $T$ and $v_4$ does not. Then $L_{12}=L_{13}=L_{23}=T.$ Each of the three pairs of opposite sides of $Q$ contains one of these sides equal to $T$, so~(b) holds.
 \end{proof}

\begin{definition}
 For a line occurring among the six sides of a quadrangle, its \emph{side multiplicity} is the number of times it occurs. A line of side multiplicity two, three, or four is  a \emph{double side}, \emph{triple side}, or \emph{quadruple side}, respectively. A degenerate quadrangle is called \emph{double-side}, \emph{triple-side}, or \emph{quadruple-side} according to whether its greatest side multiplicity is two, three, or four.
\end{definition}

Thus the quadrangles in cases~(a), (b), and~(c) of Proposition~\ref{unpack} are double-side, triple-side, and quadruple-side, respectively. 
We  consider also two familiar classes of quadrangles.

 \begin{definition} \label{trap def} A quadrangle is a
{\it trapezoid} if it has a pair of parallel opposite sides.  A quadrangle is a {\it parallelogram} if it has two pairs of parallel opposite sides, with neither pair parallel to the other.
  \end{definition}

  \section{The pencil and net of a quadrangle}\label{conic pencil section}
  
A nondegenerate quadrangle has four vertices in general position, and it is a classical theorem that the conics containing all four vertices form a pencil parameterized by the projective line $\P^1$; see  \cite[Theorem~5.13, p.~91]{BambergPenttila}. Because we also allow degenerate quadrangles, we must define our pencil a little more generally, though it will still be parameterized by $\P^1$. We establish some terminology.

\smallskip

{\it Conventions for conics.} If $0\ne H\in\k[X,Y]$ has degree at most~$2$, its degree-two homogenization is $$H^h(X,Y,Z)=Z^2H(X/Z,Y/Z).$$ Write $[H]$ for the class of nonzero scalar multiples of $H$. We call $[H]$ a {\it conic}, and we call $H$ a {\it defining polynomial} of $[H]$. If $C=[H]$, we write $$C(\k)=V(H)=\{p\in\k^2:H(p)=0\},$$
for the   $\k$-point set of $C$. Its projective realization is the conic $H^h=0$. The exceptional conic $[1]$, which will appear when dealing with the net of a quadrangle, is supported on the line at infinity with multiplicity two and has empty (affine)  $\k$-point set. The conic $[H]$ is {\it degenerate} if $H^h$ factors over the algebraic closure $\overline{\k}$ of $\k$ as a product of linear forms, and is {\it nondegenerate} otherwise.  If $H$ is nonconstant, a {\it center} of $[H]$ is a point $m\in\k^2$ such that $\nabla H(m)=0$. Such a conic is {\it central} if it has a center.

\smallskip

  We identify a line pair $\mathsf L=\{L,L'\}$ with the conic $[F_{\mathsf L}]$, where a \emph{line-pair polynomial} $F_{\mathsf L}$ is the product of linear polynomials defining $L$ and $L'$, with repetition when $L=L'$. Let  $\AA$ and $\BB$ be two distinct pairs of opposite sides of a quadrangle $Q$, and let $F_{\AA}(X,Y)$ and $F_{\BB}(X,Y)$ be line-pair polynomials for $\AA$ and $\BB$, respectively.

\begin{definition}\label{bf def}
The {\it pencil space} of the quadrangle $Q$ is the $\k$-vector space $${\mathcal V}(Q)=\{rF_\AA+sF_\BB:r,s\in\k\}.$$
The {\it pencil of $Q$} is the projective linear system
$${\mathcal P}(Q)=\P({\mathcal V}(Q))=\{[rF_\AA+sF_\BB]:[r:s]\in\P^1\},$$
and the {\it net of $Q$} is the projective linear system $$\mathcal N(Q)=\mathbb P\bigl(\mathcal V(Q)\oplus\k\mathbin{\cdot}1\bigr) =\{[rF_\AA+sF_\BB+t]:[r:s:t]\in\P^2\}. $$ We denote by $\mathcal B(Q)$ the line pairs in $\mathcal N(Q)$, i.e.,  the set of members of $\mathcal N(Q)$ represented by products of two nonconstant linear polynomials over $\k$. 
 \end{definition}

The next proposition shows that these definitions are independent of the choice of $\AA$ and $\BB$. Sections~\ref{bisector field section} and~\ref{bisector pairs section} {will interpret} $\mathcal B(Q)$ {geometrically}.



\begin{proposition}\label{pencil independent}
For a quadrangle $Q$,  $\mathcal V(Q)$ is a two-dimensional vector space whose {nonzero} elements have degree $2$, 
 the pencil $\mathcal P(Q)$ is a projective line, and the net $\mathcal N(Q)$ is a projective plane. 
 Moreover,  $\mathcal V(Q)$, $\mathcal P(Q)$, $\mathcal N(Q)$ and $\mathcal B(Q)$ are independent of the choice of two distinct pairs of opposite sides. 
\end{proposition}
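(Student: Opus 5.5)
The key identity I will establish is that the three line-pair polynomials of the opposite-side pairs are linearly dependent. Concretely, with suitable scalings, $F_\AA$, $F_\BB$, $F_\CC$ satisfy a relation $aF_\AA+bF_\BB+cF_\CC=0$ in which at least two coefficients are nonzero. Equivalently, each is a linear combination of the other two.

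Once this holds, the independence statements follow quickly. If in addition $F_\AA$ and $F_\BB$ are linearly independent, then the spans $\langle F_\AA,F_\BB\rangle$, $\langle F_\AA,F_\CC\rangle$ and $\langle F_\BB,F_\CC\rangle$ all coincide. That is the case whenever $\AA\neq\BB$, since a line-pair polynomial determines its line pair by unique factorization. The case $\AA=\CC$ needs no relation at all: then $F_\CC$ is a scalar multiple of $F_\AA$, and any two distinct pairs span the same space. The span therefore has dimension exactly two whenever the two chosen pairs are distinct, so $\mathcal V(Q)$ is well defined. Then $\mathcal P(Q)=\P(\mathcal V(Q))$, $\mathcal N(Q)$ and $\mathcal B(Q)$, which is defined from $\mathcal N(Q)$ alone, are all well defined.

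For the net to be a projective plane, the constant $1$ must not lie in $\mathcal V(Q)$. This follows from the degree claim. To prove that every nonzero element $rF_\AA+sF_\BB$ has degree $2$, I look at its quadratic part $rq_\AA+sq_\BB$, where $q_\AA$ and $q_\BB$ are the products of the homogeneous linear parts, that is, the directions of the lines. If this quadratic part vanished with $(r,s)\neq 0$, then $q_\AA$ and $q_\BB$ would be proportional, so the two pairs $\AA$ and $\BB$ would have the same direction pairs. I would rule this out with the nondegenerate triangle formed by three of the sides. Any triangle among the six sides uses sides from at least two opposite pairs, and the sides involved have distinct directions. I expect to check this case by case, using the span independence already shown to choose the most convenient pair of opposite pairs. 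For example, if $\AA$ and $\BB$ had identical direction multisets, each pair would consist of two parallel copies of the same directions. Then all six sides would take at most two (or three) directions in a way that forces the vertices into a degenerate position, contradicting the triangle.

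The main obstacle is proving the linear dependence uniformly, including for degenerate quadrangles. In the nondegenerate case it is classical: all three pairs pass through the four vertices, and the conics through four points in general position form a pencil. For the degenerate cases I would use Proposition~\ref{unpack}.
\begin{itemize}
\item In case (a), $\AA=\CC$ and the dependence is trivial.
\item In case (c), likewise $\AA=\CC$.
\item In case (b), the three pairs share a line $\ell$, so $F_\AA=\ell\,a$, $F_\BB=\ell\,b$ and $F_\CC=\ell\,c$ with linear $a,b,c$. It then suffices that $a$, $b$, $c$ are linearly dependent. In the triple-side situation of Case~3, these remaining lines are concurrent at $v_4$, or they correspond to Case~2 with $S\in\{L,M\}$; in either case they all pass through a common point. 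Three linear polynomials vanishing at a common point lie in a two-dimensional space, so they are dependent.
\end{itemize}
In every case, therefore, the three pairs lie in a common two-dimensional space.

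To keep the argument uniform, I could instead verify the dependence by direct coordinates. Translate and scale so that the relevant points are placed conveniently, then exhibit the relation explicitly.
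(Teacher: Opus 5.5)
Your argument for the linear dependence of the three opposite-side polynomials matches the paper's: the classical pencil in the nondegenerate case, nothing to prove in cases (a) and (c), and in case (b) all three polynomials lie in $F_T\cdot U_v$ with $U_v$ two-dimensional. Your unique-factorization argument that distinct pairs give linearly independent polynomials is also correct. The gap is the degree claim. Both ``every nonzero element of $\mathcal V(Q)$ has degree $2$'' and ``$1\notin\mathcal V(Q)$, so $\mathcal N(Q)$ is a plane'' depend on it. You correctly reduce it to showing that the quadratic parts $q_\AA$ and $q_\BB$ are not proportional. By unique factorization of $q_\AA=\ell_A\ell_{A'}$, this amounts to showing that $\AA$ and $\BB$ do not have the same multiset of directions. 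But you then only promise a case-by-case check based on the nondegenerate triangle. Your sketch (``two parallel copies'', ``at most two (or three) directions'') is not an argument: it says nothing about the directions of the third pair $\CC$, and it never produces the degenerate vertex position it claims.

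The missing observation is that the triangle is not what rules this out; adjacency is. Every line of $\AA$ is adjacent to every line of $\BB$. For instance, with $\AA=\{L_{12},L_{34}\}$ and $\BB=\{L_{13},L_{24}\}$:
\begin{itemize}
\item $L_{12}$ and $L_{13}$ share $v_1$;
\item $L_{12}$ and $L_{24}$ share $v_2$;
\item $L_{34}$ and $L_{13}$ share $v_3$;
\item $L_{34}$ and $L_{24}$ share $v_4$.
\end{itemize}
Parallel lines through a common point coincide. So if $\AA=\{A,A'\}$ and $\BB=\{B,B'\}$ have the same direction multiset, then after relabeling $A\parallel B$ and $A'\parallel B'$, which forces $A=B$ and $A'=B'$, that is, $\AA=\BB$. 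This contradicts distinctness. The argument works for every choice of two distinct pairs at once, so it gives the degree claim and $1\notin\mathcal V(Q)$ without any case analysis. The triangle condition is needed only to guarantee that two distinct pairs of opposite sides exist.
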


\begin{proof}
If  $Q$ is nondegenerate, then by
\cite[Theorem~4.25]{DragovicRadnovic}, the projective conics through the four
vertices of $Q$ form a pencil. Since the three pairs of opposite sides are distinct
members of this pencil, the polynomials arising from any two of them form a
basis of its two-dimensional vector space. Thus, if $Q$ is nondegenerate, $\mathcal V(Q)$ is independent
of the chosen two line pairs.
Suppose now that $Q$ is degenerate. In cases~(a) and~(c) of Proposition~\ref{unpack},  there are only two distinct pairs of opposite sides, so there is nothing to prove. In~(b), let $T$ be the triple side, let $v$ be the vertex off $T$, and let $F_T$ be a linear polynomial defining $T$. The space  $$U_v=\{G\in\k[X,Y]_{\leq1}:G(v)=0\}$$
 has dimension~$2$. All three  opposite-side polynomials lie in the two-dimensional space $F_TU_v$.  Any two such polynomials arising from distinct pairs of opposite sides are linear independent and hence form a basis of $F_TU_v$. Therefore, every choice of two distinct pairs of opposite sides {determines} the same pencil space in case~(b).
This proves that in all cases, $\mathcal V(Q)$ is independent of this choice of distinct pairs of opposite sides.
The definitions of $\mathcal P(Q)$ and $\mathcal N(Q)$ now show that the pencil and net are independent as well. Since $\mathcal B(Q)$ is defined as the family of line-pair members of $\mathcal N(Q)$, it is independent of the chosen line pairs as well.

{It remains to prove the assertions about dimension and degree. Choose
two distinct pairs of opposite sides $\AA$ and $\BB$. If the homogeneous
quadratic parts $(F_{\AA})_2$ and $(F_{\BB})_2$ were linear dependent, then the
two line pairs would have the same directions, counted with repetition, but since $Q$ is a quadrangle, every
line in $\AA$ shares a vertex with every line in $\BB$, and parallel  
lines through a common point coincide, this implies  $\AA=\BB$, a
contradiction. Thus $(F_{\AA})_2$ and $(F_{\BB})_2$ are linearly independent,
and hence $F_{\AA}$ and $F_{\BB}$ are linearly independent.}
Therefore, $\mathcal V(Q)$ has dimension~$2$, and hence
$\mathcal P(Q)$ is a projective line. If $0\ne H=rF_{\AA}+sF_{\BB}$, then $H_2=r(F_{\AA})_2+s(F_{\BB})_2\ne0,$
so $H$ has degree~$2$. Since no nonzero element of $\mathcal V(Q)$ is
constant, {the space obtained by adjoining the constants is three-dimensional}.
Thus $\mathcal N(Q)$ is a projective plane.
\end{proof}

For a recent treatment of line pairs in pencils of real projective conics using geometric algebra, see \cite{Loucka}.
The next theorem extends the classical description of the pencil of conics through the vertices of a nondegenerate quadrangle to include the degenerate case also. 
 For $H\in\k[X,Y]$ and $w=(a,b)\in\k^2$, we write $D_wH=a\,\partial_XH+b\,\partial_YH$ for the directional derivative of $H$ in the direction $w$, and write $D_wH(p)$ for its value at a point $p$.

  \begin{theorem} \label{big pencil}

Let $Q$ be a quadrangle with vertices $v_1,v_2,v_3,v_4$ and sides $L_{ij}$. If $H\in\k[X,Y]$ has degree~$2$, then the conic $[H]$ is in $\mathcal P(Q)$ if and only if $H$ vanishes at the four vertices and, whenever $v_i=v_j$,  either $v_i$ is singular on $[H]$ or $L_{ij}$ is tangent to $[H]$ at $v_i$.

\end{theorem}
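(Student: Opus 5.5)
We prove both directions by a dimension count. Let $W$ be the space of polynomials $H$ of degree at most $2$ that satisfy the stated conditions: $H$ vanishes at the four vertices and, whenever $v_i=v_j$, $H(v_i)=0$ together with $D_wH(v_i)=0$ for a direction vector $w$ of $L_{ij}$. The condition ``$v_i$ is singular on $[H]$ or $L_{ij}$ is tangent to $[H]$ at $v_i$'' is exactly $D_wH(v_i)=0$. Each condition is linear in $H$, so $W$ is a subspace of the six-dimensional space $\k[X,Y]_{\le 2}$. The plan is to show that $\mathcal V(Q)\subseteq W$ and that $\dim W\le 2$. Then Proposition~\ref{pencil independent} gives $W=\mathcal V(Q)$, and since every nonzero element of $\mathcal V(Q)$ has degree $2$, the theorem follows.

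\emph{Containment.} It suffices to check the two basis polynomials, and for convenience we check every opposite-side polynomial $F_{\mathsf L}=\ell_{ij}\ell_{kl}$, where $\{i,j,k,l\}=\{1,2,3,4\}$. Each vertex lies on $L_{ij}$ or on $L_{kl}$, so $F_{\mathsf L}$ vanishes at all four vertices. Now suppose $v_a=v_b$ and write $D=D_w$ with $w$ along $L_{ab}$.

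If $\{a,b\}=\{i,j\}$, then $D\ell_{ij}=0$ because $w$ is parallel to $L_{ij}$. Hence $DF_{\mathsf L}(v_a)=\ell_{ij}(v_a)\,D\ell_{kl}=0$.

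Otherwise, after relabeling, $a\in\{i,j\}$ and $b\in\{k,l\}$. Then $v_a=v_b$ lies on both lines, so $\ell_{ij}(v_a)=\ell_{kl}(v_a)=0$, and the product rule gives $DF_{\mathsf L}(v_a)=0$ because $v_a$ is singular on $[F_{\mathsf L}]$.

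This handles every degenerate case uniformly, without appealing to Proposition~\ref{unpack}.

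\emph{Upper bound.} This is the main step, and we split it into the cases of Proposition~\ref{unpack}.

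If $Q$ is nondegenerate, we make an affine change of coordinates so that three vertices are $(0,0),(1,0),(0,1)$; the fourth vertex $(p,q)$ then has $p,q\ne0$ and $p+q\ne1$. Writing $H=aX^2+bXY+cY^2+dX+eY+f$, vanishing at the first three vertices forces $f=0$, $d=-a$, $e=-c$. Vanishing at the fourth then imposes one more nontrivial linear condition, since the coefficient of $b$ is $pq\ne0$. This gives $\dim W=2$. Alternatively, one can cite the classical result used in Proposition~\ref{pencil independent}.

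In case (b), suppose $v_1,v_2,v_3$ lie on $T$ and $v_4$ lies off $T$ (the case where the vertices are distinct and three are collinear). The restriction of $H$ to $T$ is a polynomial of degree at most $2$ with three roots, so $\ell_T\mid H$. Hence $H=\ell_T G$ with $G$ of degree at most $1$. Because $v_4\notin T$, we get $G(v_4)=0$, so $H\in F_TU_v$, which has dimension $2$.

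The remaining degenerate configurations reduce to the same pattern. When there is a double vertex $v_1=v_2=p$, the tangency condition plays the role of a third root. The restriction of $H$ to $L_{12}$ has a double root at $p$, and $H$ vanishes at the other vertices. We again factor $H$ along a line containing enough roots, or we count conditions in adapted coordinates with $p=(0,0)$, $L_{12}$ the $X$-axis, and the other vertices at convenient positions. For example, for one double vertex with $q=(0,1)$ and $r=(1,1)$, the conditions are $f=0$, $d=0$, $c+e=0$, and $a+b+c+e=0$. These are four independent conditions, giving $\dim W=2$. Case (c) is handled with $p=(0,0)$ and $q=(1,0)$: the conditions at $p$ and $q$, together with the tangency directions along $L_{12}$, $L_{34}$ and along $T$ (which appears as $L_{13}$ and the others), again yield four independent conditions. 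In each of these cases the independence of the conditions relies on the nondegenerate triangle hypothesis, and verifying it case by case is the expected obstacle.
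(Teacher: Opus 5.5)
Your plan (show $\mathcal V(Q)\subseteq W$ and $\dim W\le 2$, then invoke Proposition~\ref{pencil independent}) is the paper's, and so is your containment argument. Your nondegenerate computation, the case of three distinct collinear vertices, and the triple-side case with a double vertex (where tangency supplies the third root on $T$) are all fine.

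The gap is the bound $\dim W\le 2$ in the double-side and quadruple-side cases. You defer it as ``the expected obstacle,'' and what you do write there does not establish it. In these cases no line carries three roots counted with multiplicity, so factoring is unavailable and you must count conditions.

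\emph{Case~(a).} Your computation with $p=(0,0)$, $L_{12}$ the $X$-axis, $q=(0,1)$, $r=(1,1)$ is one instance, not a normal form. Once $p$, $L_{12}$ and $q$ are normalized, the only remaining affine freedom is $(X,Y)\mapsto(\lambda X,Y)$. So $r=(1,y_0)$ still carries a free parameter.

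\emph{Case~(c).} Your list of conditions includes tangency along $T$. But the only coincident index pairs are $\{1,2\}$ and $\{3,4\}$, so no such condition belongs to $W$. In fact it fails for $F_{12}F_{34}\in\mathcal V(Q)$ (writing $F_{ij}$ for a linear equation of $L_{ij}$): for $u$ along $T$,
$$D_u(F_{12}F_{34})(v_1)=(D_uF_{12})\,F_{34}(v_1)\ne0.$$
Imposing it would force $\dim W\le 1$, contradicting your own containment.

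Both cases can be repaired by direct computation, with $H=aX^2+bXY+cY^2+dX+eY+f$ as in your proposal.
In~(a), take $r=(x_0,y_0)$ with $x_0\ne0$. The conditions $f=0$, $d=0$, $c+e=0$, $ax_0^2+bx_0y_0+cy_0^2+ey_0=0$ are independent.
In~(c), take $v_1=v_2=(0,0)$ and $v_3=v_4=(1,0)$, and let $(\alpha,\beta)$, $(\alpha',\beta')$ be directions of $L_{12}$, $L_{34}$; then $\beta,\beta'\ne0$ because these sides are not $T$. The correct four conditions are $f=0$, $a+d=0$, $\alpha d+\beta e=0$, $\alpha'(2a+d)+\beta'(b+e)=0$. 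They are independent: examine the coefficients of $b$, $e$, $a$, $f$ in turn.

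The paper avoids coordinates and case splitting altogether. A degenerate quadrangle has a side of multiplicity one, say $S=L_{12}$, so $v_3,v_4\notin S$. Restricting $W$ to $S$ lands in the one-dimensional space of quadratics in the line parameter with the two prescribed roots (a double root when $v_1=v_2$). That space is spanned by the restriction of $F_{13}F_{24}$. The kernel consists of the products $F_{12}M$ with $M$ linear and $M|_{L_{34}}$ having two roots counted with multiplicity, so it equals $\k F_{12}F_{34}$. This gives $\dim W\le 2$ for every degenerate quadrangle at once, including the triple-side ones.
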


\begin{proof} 
If $Q$ is nondegenerate, the assertion follows from \cite[Theorem~4.25]{DragovicRadnovic} since
the pairs of opposite sides generate the pencil of conics through the four
vertices. Thus we may assume  for the rest of the proof that $Q$ is degenerate.
 We will use the fact that if $H(v_i)=0$ for some $i$, and $w$ is a direction vector for $L_{ij}$, then the condition that either $v_i$ is singular on $[H]$ or $L_{ij}$ is tangent to $[H]$ at $v_i$ is equivalent to $D_wH(v_i)=0$.  
 Let $W_Q$ be the vector space of polynomials in
$\k[X,Y]_{\leq2}$ that vanish at all four vertices and satisfy
$D_wG(v_i)=0$ whenever $v_i=v_j$ and $w$ is a direction vector for
$L_{ij}$. We will show  $W_Q = \mathcal V(Q)$. 
 
 Evaluating at the vertices and taking directional derivatives are linear in $H$, so for the forward implication, since the line-pair polynomials for the pairs of opposite sides of $Q$ span $\mathcal V(Q)$, it suffices to 
 show that each such line-pair polynomial vanishes at all four vertices and satisfies the given tangency property whenever two vertices coincide.  Let $\{S,S'\}$ be a pair of opposite sides, let $F_S$ and $F_{S'}$ be linear polynomials defining its two lines, and set $F=F_SF_{S'}$. Each vertex lies on at least one of $S$ and $S'$, so $F$ vanishes at all four vertices.
If also $v_i=v_j$, then whenever $L_{ij}$  is   $S$ or $S'$, we have $F|_{L_{ij}}=0$.  Since $F$ vanishes identically on $L_{ij}$, its directional derivative along $L_{ij}$ vanishes at $v_i$, so $D_wF(v_i)=0$. Otherwise, one of $S,S'$ passes through $v_i$ and the other passes through $v_j$. Hence both factors vanish at $v_i=v_j$.  By the product rule, $$D_wF(v_i)=D_wF_S(v_i)F_{S'}(v_i)+F_S(v_i)D_wF_{S'}(v_i)=0$$ for every direction $w$. Thus $v_i$ is singular on $[F]$.  This proves $\mathcal V(Q)\subseteq W_Q$, which verifies the forward implication.

 It remains to prove $W_Q\subseteq\mathcal V(Q)$. Since $Q$ is degenerate, Proposition~\ref{unpack} shows that it has a side of multiplicity one. After relabeling, write $S=L_{12}$ for this side, and let $F_{ij}$ be a linear polynomial defining $L_{ij}$. Since $S$ occurs only once, neither $v_3$ nor $v_4$ lies on $S$, and neither $L_{13}$ nor $L_{24}$ equals $S$. Set $F=F_{13}F_{24}$. Choose a nonzero direction vector $u$ for $S$. Every point of $S$ is uniquely of the form $v_1+tu$ for some $t\in\k$, so restriction to $S$ defines the
linear map $$ \rho:W_Q\longrightarrow\k[t]_{\leq2}:G\longmapsto G(v_1+tu). $$
For every $G\in W_Q$, if $v_1\ne v_2$, then $\rho(G)$ vanishes at the two distinct parameters corresponding to $v_1$ and $v_2$ because $G(v_1)=G(v_2)=0$. If $v_1=v_2$, then both vertices correspond to $t=0$, and the chain rule and the derivative condition along $S$ {imply} $\rho(G)(0)=G(v_1)=0$ and $\rho(G)'(0)=D_uG(v_1)=0.$
Thus $0$ is a double zero of $\rho(G)$. In either case, $\rho(G)$ has two
zeros counted with multiplicity. The polynomial $\rho(F)$ is a nonzero quadratic with these same zeros, so $\operatorname{im}\rho$ is contained in the one-dimensional space
$\operatorname{span}\{\rho(F)\}$.

 We claim that $\ker\rho=\operatorname{span}\{F_{12}F_{34}\}.$ The inclusion $\operatorname{span}\{F_{12}F_{34}\}\subseteq\ker\rho$ follows because $F_{12}F_{34}$ is an opposite-side polynomial and vanishes identically on $S$.  Conversely, if $G\in\ker\rho$, then $G$ vanishes identically on $S$. Since $F_{12}$ is a linear equation for $S$, it follows that $F_{12}$ divides $G$, so $G=F_{12}M$ for some polynomial $M$ of degree at most~$1$. Since $F_{12}(v_i)\ne0$ for $i=3,4$, we have $M(v_3)=M(v_4)=0$. If $v_3=v_4=p$ and $w$ is a direction vector for $L_{34}$, the derivative condition and the product rule {imply} $0=D_wG(p)=F_{12}(p)D_wM(p),$ because $M(p)=0$. Hence $M|_{L_{34}}$ has two zeros counted with multiplicity, whether or not $v_3=v_4$. Since this restriction has degree at most~$1$, it vanishes identically. Thus $M\in\operatorname{span}\{F_{34}\}$, so $G\in\operatorname{span}\{F_{12}F_{34}\}$. This proves the reverse inclusion and hence the claim that $ \ker\rho=\operatorname{span}\{F_{12}F_{34}\}. $    Therefore, $$ \dim W_Q=\dim\ker\rho+\dim\operatorname{im}\rho\leq2. $$ By Proposition~\ref{pencil independent}, $\dim\mathcal V(Q)=2$, while the forward implication {establishes} $\mathcal V(Q)\subseteq W_Q$. Consequently, $W_Q=\mathcal V(Q)$, which proves the theorem.
\end{proof}

\begin{corollary}\label{net vertex characterization}
Let $Q$ be a quadrangle with vertices $v_1,v_2,v_3,v_4$ and sides $L_{ij}$, and let $H\in\k[X,Y]$ have degree~$2$. Then $[H]$ is in the net $\mathcal N(Q)$ if and only if $$ H(v_1)=H(v_2)=H(v_3)=H(v_4) $$
and, whenever $v_i=v_j$ for some $i<j$, the line $L_{ij}$ is tangent at $v_i$ to the level conic $H=H(v_i)$ when that conic is nonsingular there.
\end{corollary}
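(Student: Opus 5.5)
The plan is to reduce the corollary to Theorem~\ref{big pencil} by subtracting a constant. First observe that for $H$ of degree~$2$, the class $[H]$ lies in $\mathcal N(Q)$ if and only if $H=G+t$ for some nonzero $G\in\mathcal V(Q)$ and some $t\in\k$. Indeed, $[H]\in\mathcal N(Q)$ means $H=G+t$ with $G\in\mathcal V(Q)$, $t\in\k$, not both zero. Since $H$ has degree~$2$, $G\neq0$. By Proposition~\ref{pencil independent}, $G$ has degree~$2$, so $[G]\in\mathcal P(Q)$. Writing $c$ for the common value in question, the natural candidate is $G=H-c$. Then $[H]\in\mathcal N(Q)$ if and only if there is a constant $c$ with $[H-c]\in\mathcal P(Q)$. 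Note that $H-c$ still has degree~$2$, so Theorem~\ref{big pencil} applies to it.

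For the forward direction, suppose $H=G+t$ with $[G]\in\mathcal P(Q)$. Theorem~\ref{big pencil} gives $G(v_i)=0$ for all $i$, so $H(v_i)=t$ for all $i$. Moreover, whenever $v_i=v_j$, either $v_i$ is singular on $[G]$ or $L_{ij}$ is tangent to $[G]$ at $v_i$. Since $H-H(v_i)=G$, the conic $[G]$ is exactly the level conic $H=H(v_i)$. So if that level conic is nonsingular at $v_i$, the first alternative fails and $L_{ij}$ is tangent there, as claimed.

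For the converse, let $c$ be the common value $H(v_1)=\dots=H(v_4)$ and set $G=H-c$. Then $G$ has degree~$2$ and vanishes at the four vertices. Whenever $v_i=v_j$, the hypothesis says that either the level conic $[G]$ is singular at $v_i$ or $L_{ij}$ is tangent to it there. This is precisely the condition of Theorem~\ref{big pencil}, so $[G]\in\mathcal P(Q)$, and hence $[H]=[G+c]\in\mathcal N(Q)$.

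The only delicate point is bookkeeping rather than any real obstacle. One must check that "singular or tangent" in Theorem~\ref{big pencil} is the same as "tangent when nonsingular" in the corollary. One must also check that the gradient and the directional derivative of $H$ and $H-c$ agree, which justifies speaking of the level conic $H=H(v_i)$ interchangeably with $[G]$. Both are immediate, since constants have zero derivatives and the equivalence with $D_wG(v_i)=0$ was recorded in the proof of Theorem~\ref{big pencil}.
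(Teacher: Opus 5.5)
Your proof is correct and follows essentially the same route as the paper: subtract the common vertex value $c$, identify the level conic $H=H(v_i)$ with $[H-c]$, and apply Theorem~\ref{big pencil} to $H-c$ in both directions. Your extra check that $H-c$ is nonzero of degree~$2$ (via Proposition~\ref{pencil independent}), so that Theorem~\ref{big pencil} genuinely applies, is a detail the paper leaves implicit.
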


\begin{proof}
 If $[H]\in\mathcal N(Q)$, write $H=F+c$ with $F\in\mathcal V(Q)$ and $c\in\k$. Theorem~\ref{big pencil} shows that $F$ vanishes at every vertex, so all four vertex values of $H$ equal $c$. The level conic $H=c$ is $[F]$, so the remaining tangency condition follows from the same theorem.
Conversely, set  $c=H(v_1)$ and $F=H-c$. Having equal vertex values implies that $F$ vanishes at every vertex, and the level conic $H=c$ is $[F]$. At every repeated vertex this conic is either singular or, by hypothesis, tangent to $L_{ij}$. Theorem~\ref{big pencil} therefore {implies} $F\in\mathcal V(Q)$, equivalently $[H]\in\mathcal N(Q)$.
\end{proof}

\section{Bisection of conics}\label{bisection section}

In this section we give a geometric characterization of the net of a quadrangle as the set of conics bisected by the sides of the quadrangle, thus reducing membership in the net of a quadrangle to a midpoint check involving only its six sides. We recall first the notion of bisectors as developed in \cite{OWQuad, OWPencil,OWProj}.

\smallskip

{\it Bisectors of conics.} Let $C=[H]$ be a conic and let $\ell$ be a line.  We say that $\ell$ \emph{crosses $C$} if it is not contained in $C$ and the equations defining $\ell$ and $C$ have a common solution in $\overline{\k}^2$, where $\overline{\k}$ is the algebraic closure of $\k$. When this occurs, the projective closure of $\ell$ meets the projective realization of $C$ in two points of $\P^2(\overline{\k})$, counted with multiplicity. If both points are finite, $\ell$ \emph{bisects $C$} at their midpoint. If one point is at infinity, $\ell$ bisects $C$ at the point at infinity on its projective closure. In either case, that point is called the \emph{bisection point} of $\ell$ for $C$. If $\ell$ does not cross $C$, it bisects $C$ at every point of the affine line $\ell$, and each such point is a bisection point.

Let $\mathcal F$ be a family of conics. If $\ell$ crosses at least one member of $\mathcal F$, we say that $\ell$ \emph{bisects $\mathcal F$ at $m$} when $m$ is its bisection point for every member of $\mathcal F$ that it crosses. If $\ell$ crosses no member of $\mathcal F$, it bisects $\mathcal F$ at every point of the affine line $\ell$. Any point at which $\ell$ bisects $\mathcal F$ is a \emph{bisection point} of $\ell$ for $\mathcal F$. We call $\ell$ a \emph{bisector of $\mathcal F$} if it bisects $\mathcal F$ at some bisection point.

\smallskip

In \cite{OWQuad,OWPencil,OWProj}, we called a bisection point of a bisector its ``midpoint.'' We call it bisection point here to avoid confusion with  the side midpoints of a quadrangle. A bisector that crosses a member of $\mathcal F$ has a unique bisection point; if it crosses no member, its bisection points are all the points on the line.

\smallskip

{\it Bisectors of quadrangles.}  A line $\ell$ crosses a line pair $\mathsf L=\{L,L'\}$ exactly when $\ell\notin\{L,L'\}$ and is not parallel to both lines. A line $\ell$ \emph{bisects a quadrangle $Q$ at $m$} if it bisects each pair of opposite sides of $Q$ at $m$. It is a \emph{bisector of $Q$} if it bisects $Q$ at some bisection point.

\smallskip

Each side of a quadrangle is a bisector of the quadrangle, and its side midpoint is a bisection point of this bisector. 
A quadrangle has many more bisectors, as will be evident from Theorem~\ref{degs} and subsequent results.

The following algebraic criterion for bisection of a conic will be crucial in what follows.   For a nondegenerate conic, the equivalence of (1) and (3) is the algebraic form of the classical theorem that the midpoints of a family of parallel chords of a conic lie on the conjugate diameter; see \cite[p.~88]{Coxeter}.

\begin{proposition}\label{reflection}
Let $C=[H]$ be a conic, let $\ell$ be a line, and let $w$ be a direction vector for $\ell$. The following are equivalent for a  point $m\in\ell$.
\begin{itemize}

\item[$(1)$] The line $\ell$ is a bisector of $C$ with bisection point $m$.

\item[$(2)$] $H(p)=H(q)$ for every  pair of points $p,q\in\ell$ having midpoint $m$.

\item[$(3)$] $D_wH(m)=0$.
\end{itemize}
  \end{proposition}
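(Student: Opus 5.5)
The plan is to restrict $H$ to $\ell$ and reduce all three conditions to statements about a single polynomial in one variable. Parametrize $\ell$ by $t\mapsto m+tw$ and set $h(t)=H(m+tw)$, a polynomial of degree at most $2$. Write $h(t)=\alpha t^2+\beta t+\gamma$. By the chain rule, $h'(0)=D_wH(m)$, so $\beta=D_wH(m)$. Two points $p,q\in\ell$ have midpoint $m$ exactly when $p=m+tw$ and $q=m-tw$ for some $t\in\k$. Hence condition (2) says $h(t)=h(-t)$ for all $t\in\k$, that is, $2\beta t=0$ for all $t$.

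For (2)$\Leftrightarrow$(3), take $t=1$ and use $\operatorname{char}\k\neq 2$. This shows (2) is equivalent to $\beta=0$, which is (3). Note that $\k$ need not be infinite for this: one value of $t$ suffices in one direction, and the converse is the polynomial identity.

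For (1)$\Leftrightarrow$(3), I will split according to whether $\ell$ crosses $C$.

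If $\ell$ does not cross $C$, every point of $\ell$ is a bisection point, so (1) holds automatically. I must then show $\beta=0$. Not crossing means one of two things.
\begin{itemize}
\item Either $\ell\subseteq C$, so $h\equiv0$ (after homogenizing, $\ell$ is a component). This covers the exceptional conic $[1]$ only vacuously, since there $h=\gamma\neq0$ is constant, which falls under the next case.
\item Or $h$ has no root in $\overline{\k}$. A polynomial of degree at most $2$ with no root over $\overline{\k}$ must be a nonzero constant, so $\alpha=\beta=0$.
\end{itemize}
So (3) holds in either case.

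If $\ell$ crosses $C$, then $h$ is nonconstant, and the intersection of the projective closure of $\ell$ with the projective conic corresponds to the roots of the homogenization $\alpha t^2+\beta ts+\gamma s^2$ in $[t:s]$.
\begin{itemize}
\item If $\alpha\neq0$, both intersection points are finite, with parameters $t_1,t_2$ satisfying $t_1+t_2=-\beta/\alpha$. The midpoint is $m+\tfrac{t_1+t_2}{2}w$, and it equals $m$ iff $\beta=0$; again $2$ is invertible.
\item If $\alpha=0$, then $\beta\neq0$, since $h$ is nonconstant and has a root. One intersection point is at infinity, so the bisection point is the point at infinity and not $m$. Thus (1) fails, and so does (3).
\end{itemize}

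The main obstacle is bookkeeping the degenerate cases so that the case analysis matches the paper's definitions of ``crosses'' and bisection point exactly. These cases are: $\ell$ a component of $C$, a double root counted with multiplicity, the point at infinity, and the constant conic $[1]$. The most delicate of these is checking that ``not crossing'' forces $h$ to be constant.
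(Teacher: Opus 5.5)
Your proof is correct and follows the paper's argument. You use the same parametrization $t\mapsto m+tw$, the identity $H(m+tw)-H(m-tw)=2\beta t$ for (2)$\Leftrightarrow$(3), and the root-sum computation on the homogenized restriction for (1)$\Leftrightarrow$(3). The only difference is organizational: you split by whether $\ell$ crosses $C$ and then read off $(\alpha,\beta)$, whereas the paper splits into the cases $a\neq0$, $a=0\neq b$, and $a=b=0$ and reads off the crossing status, and the two case analyses are equivalent.
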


\begin{proof}
We can parameterize $\ell$ by $t\mapsto m+tw$, and since $H$ has degree at most~$2$,  we can write $H(m+tw)=at^2+bt+c,$ where $b=D_wH(m)$ and $a,c\in\k$. If the points $p,q\in\ell$ have midpoint $m$, then $p=m+tw$ and $q=m-tw$ for some $t\in\k$, and $$H(m+tw)-H(m-tw)=2bt.$$
 Since $\operatorname{char}(\k)\ne2$, condition~$(2)$ holds if and only if $b=0$, which is~$(3)$.

In homogeneous coordinates $[t:u]$ on the projective closure of $\ell$, the restriction of $H$ is
$at^2+btu+cu^2.$ If $a\ne0$, both intersection points of $\ell$ with $C$ are finite.  If their parameters are $r,r'\in\overline{\k}$, then $r+r'=-b/a$, so their midpoint is $m$ exactly when $b=0$.  If $a=0$ and $b\ne0$, then $H(m+tw)=bt+c$ has degree one in $t$, and its degree-two homogenization $u(bt+cu)$ has one finite zero and the point $[1:0]$ at infinity. Thus $\ell$ crosses $C$ and its bisection point is at infinity. If $a=b=0$, then $H|_\ell=c$ is constant. When $c=0$, the line $\ell$ is contained in $C$, and when $c\ne0$, it has no finite intersection with $C$. In either case $\ell$ does not cross $C$, so every point of $\ell$, including $m$, is a bisection point.  These three cases prove the equivalence of~$(1)$ and~$(3)$.
\end{proof}

\begin{corollary}
\label{conic butterfly property}
Let $C=[H]$ and $D=[K]$ be distinct conics in $\mathcal P(Q)$ for a quadrangle $Q$. Let $\ell$ be a line and $m \in \ell$. The following are equivalent.
\begin{itemize}
\item[$(1)$]    $\ell$ bisects both $C$ and $D$ at  $m$.
\item[$(2)$]   $\ell$ bisects $Q$ at $m$.
\item[$(3)$]  $\ell$ bisects every conic in $\mathcal N(Q)$ at $m$.
\end{itemize}
\end{corollary}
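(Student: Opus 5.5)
The plan is to turn every bisection statement into a linear condition on defining polynomials via Proposition~\ref{reflection}. Let $w$ be a direction vector for $\ell$. By the equivalence of (1) and (3) in that proposition, $\ell$ bisects a conic $[G]$ at $m$ exactly when $D_wG(m)=0$. For the family $\mathcal N(Q)$ this needs a little care: the definition says $\ell$ bisects a family at $m$ if $m$ is the bisection point for every member that $\ell$ crosses, or, when $\ell$ crosses no member, $m$ is any point of $\ell$. Members that $\ell$ does not cross are bisected at every point of $\ell$, so in all cases ``$\ell$ bisects the family $\mathcal F$ at $m$'' is equivalent to ``$\ell$ bisects each member of $\mathcal F$ at $m$.'' By Proposition~\ref{reflection}, this in turn is equivalent to $D_wG(m)=0$ for a defining polynomial $G$ of every member. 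I will record this reduction first, since it applies to all three conditions: the pair $\{C,D\}$, the three pairs of opposite sides of $Q$, and $\mathcal N(Q)$.

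With that reduction, the key observation is that $G\mapsto D_wG(m)$ is a linear functional on $\k[X,Y]_{\le 2}$ that kills constants. So the set of $G\in\mathcal V(Q)\oplus\k\cdot 1$ with $D_wG(m)=0$ is a subspace containing $\k\cdot1$. Condition (3) says this subspace is everything, which happens exactly when the functional vanishes on $\mathcal V(Q)$.
\begin{itemize}
\item Since $C\ne D$ are distinct points of the projective line $\mathcal P(Q)$, the polynomials $H$ and $K$ are linearly independent in the two-dimensional space $\mathcal V(Q)$ (Proposition~\ref{pencil independent}), so they form a basis. Hence (1) is equivalent to vanishing on $\mathcal V(Q)$, which is equivalent to (3).
\item Likewise, (2) says the functional vanishes on the line-pair polynomials of all pairs of opposite sides. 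Any two distinct such pairs form a basis of $\mathcal V(Q)$, again by Proposition~\ref{pencil independent}, so (2) is equivalent to (3).
\item The implications (3)$\Rightarrow$(1) and (3)$\Rightarrow$(2) are immediate, since $C$, $D$ and the opposite-side pairs all lie in $\mathcal N(Q)$.
\end{itemize}

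The only step needing attention is the first reduction, namely that bisecting a family at $m$ is the same as bisecting each member at $m$. This handles the degenerate situations where $\ell$ is a component of a line pair, where $\ell$ is parallel to both lines, or where the bisection point is at infinity. All of these are absorbed by criterion (3) of Proposition~\ref{reflection}, so no case analysis over Proposition~\ref{unpack} should be needed.
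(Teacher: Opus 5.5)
Your proposal is correct and follows essentially the paper's route. Both arguments use Proposition~\ref{reflection} to turn every bisection condition into the vanishing of the linear functional $G\mapsto D_wG(m)$ on $\mathcal V(Q)\oplus\k\cdot 1$, which kills constants. Both then use Proposition~\ref{pencil independent}: $H,K$, or any two distinct opposite-side polynomials, form a basis of $\mathcal V(Q)$.

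The differences are cosmetic. You organize the argument as $(1)\Leftrightarrow(3)$ and $(2)\Leftrightarrow(3)$, where the paper uses the cycle $(1)\Rightarrow(2)\Rightarrow(3)\Rightarrow(1)$. Your preliminary reduction from bisecting a family to bisecting each member is correct but not needed, since conditions (2) and (3) are already stated memberwise.
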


\begin{figure}[!ht]
\centering
\includegraphics[width=.7\textwidth]{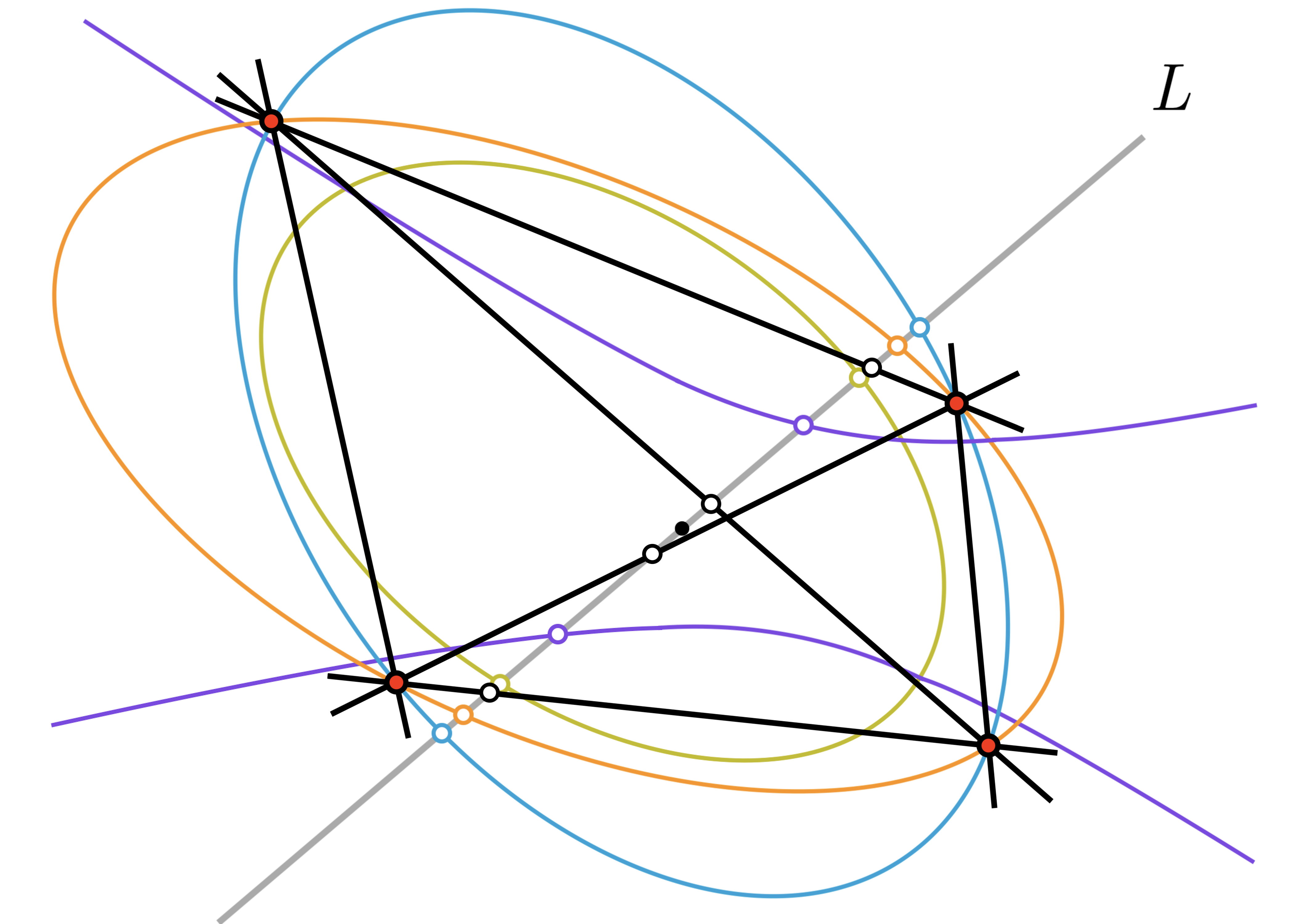}
\caption{A quadrangle and several conics from its net. Any line such as $L$ that bisects the quadrangle, or even just two conics in its net not from the same family of level curves,  must bisect all the conics in the net; see Corollary~\ref{conic butterfly property}.}
\label{conic butterfly figure}
\end{figure}

\begin{proof}
 Let $w$ be a direction vector for $\ell$. The linear functional $$\Phi_m:\mathcal V(Q)\oplus\k\mathbin{\cdot}1\longrightarrow\k:F\longmapsto D_wF(m)$$  vanishes on constants. By Proposition~\ref{reflection}, for each nonzero $F\in\mathcal V(Q)\oplus\k\mathbin{\cdot}1$, the line $\ell$ bisects the conic $[F]$ at $m$ exactly when $\Phi_m(F)=0$.

$(1)\Rightarrow(2)$: Since $C=[H]$ and $D=[K]$ are distinct members of $\mathcal P(Q)$, the polynomials $H$ and $K$ form a basis of $\mathcal V(Q)$.  By~$(1)$, $\Phi_m(H)=\Phi_m(K)=0$, so $\Phi_m$ vanishes on $\mathcal V(Q)$. In particular, it vanishes on a line-pair polynomial for every pair of opposite sides of $Q$. By Proposition~\ref{reflection}, $\ell$ bisects $Q$ at $m$.

$(2)\Rightarrow(3)$: Suppose that $\ell$ bisects $Q$ at $m$.  Line-pair polynomials for two distinct pairs of opposite sides form a basis of $\mathcal V(Q)$, and Proposition~\ref{reflection} shows that $\Phi_m$ vanishes on this basis. Hence it vanishes on $\mathcal V(Q)$ and, since it vanishes on constants, on $\mathcal V(Q)\oplus\k\mathbin{\cdot}1$. By Proposition~\ref{reflection}, $\ell$ bisects every member of $\mathcal N(Q)$ at $m$.

$(3)\Rightarrow(1)$: This follows since $C,D\in\mathcal P(Q)\subseteq\mathcal N(Q)$.
\end{proof}

\begin{remark}
 Corollary~\ref{conic butterfly property} contains a conic form of the classical Butterfly Theorem. If $C\in\mathcal P(Q)$ is nondegenerate and $m$ is the intersection of two nonparallel opposite sides of $Q$, then every line through $m$ bisects $C$ at $m$ if and only if it bisects $Q$ there; see \cite{CerinLines,CerinGianella}.
\end{remark}

\begin{corollary}\label{common tangency bisection}
Let $Q$ be a quadrangle. If a line $\ell$ is tangent at a point $m\in\k^2$ to two distinct conics in the net $\mathcal N(Q)$ that are nonsingular at $m$, then $\ell$ bisects every conic in $\mathcal N(Q)$ at $m$. Moreover, for each conic $[F]$ in the pencil $\mathcal P(Q)$, the level conic $F=F(m)$ is tangent to $\ell$ at $m$ whenever that level conic is nonsingular at $m$.
\end{corollary}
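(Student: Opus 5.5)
The plan is to reduce everything to the linear functional $\Phi_m$ from the proof of Corollary~\ref{conic butterfly property}. Let $w$ be a direction vector for $\ell$ and set $\Phi_m(F)=D_wF(m)$ on $\mathcal V(Q)\oplus\k\mathbin{\cdot}1$. For a conic $[H]$ that passes through $m$ and is nonsingular there, tangency of $\ell$ to $[H]$ at $m$ means $\nabla H(m)\ne0$ and $\nabla H(m)\cdot w=0$. In particular $D_wH(m)=0$, so $\Phi_m(H)=0$.

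First I would write the two given conics as $[H_1]$ and $[H_2]$ with $H_i=F_i+c_i$, where $F_i\in\mathcal V(Q)$ and $c_i\in\k$. Then $\Phi_m(F_i)=\Phi_m(H_i)=0$, because $\Phi_m$ kills constants. The key point is to show that $F_1$ and $F_2$ are linearly independent, so that together they span $\mathcal V(Q)$. Once that is done, $\Phi_m$ vanishes on $\mathcal V(Q)\oplus\k\mathbin{\cdot}1$. By Proposition~\ref{reflection} (or equivalently the argument of Corollary~\ref{conic butterfly property}), $\ell$ then bisects every conic in $\mathcal N(Q)$ at $m$.

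The main obstacle is this independence, and here I use the nonsingularity hypothesis. Suppose instead $F_2=\lambda F_1$. The case $F_1=0$ is impossible, since then $H_1$ would be constant and singular everywhere. So both conics are level conics of the single polynomial $F_1$. Both pass through $m$, so $F_1(m)+c_1=0$ and $\lambda F_1(m)+c_2=0$. Then $H_2=\lambda F_1+c_2=\lambda(F_1-F_1(m))$, and similarly $H_1=F_1-F_1(m)$. Hence $H_2=\lambda H_1$, so $[H_1]=[H_2]$, contradicting distinctness. This is exactly the caveat suggested by the figure caption: tangency to two members of one family of level curves is not enough, and it is the passing through $m$ that forces the two conics into different families. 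The degenerate case $\lambda=0$ makes $H_2$ constant, which is again excluded.

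For the second assertion, take $[F]\in\mathcal P(Q)$ and consider the level polynomial $G=F-F(m)$, which lies in $\mathcal V(Q)\oplus\k\mathbin{\cdot}1$. By the first part, $\Phi_m(G)=D_wF(m)=0$. If the level conic $[G]$ is nonsingular at $m$, then $\nabla G(m)\ne0$ while $\nabla G(m)\cdot w=0$. Since $G(m)=0$, the point $m$ lies on $[G]$, so $\ell$ is the tangent line there.
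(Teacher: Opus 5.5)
Your proof is correct and follows the paper's argument. The paper shows directly that $1,H,K$ form a basis of $\mathcal V(Q)\oplus\k\mathbin{\cdot}1$ because $H(m)=K(m)=0$, which is equivalent to your check that the pencil parts $F_1,F_2$ span $\mathcal V(Q)$; the second assertion is handled the same way in both.
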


\begin{figure}[!htbp]
\centering
\includegraphics[width=\textwidth]{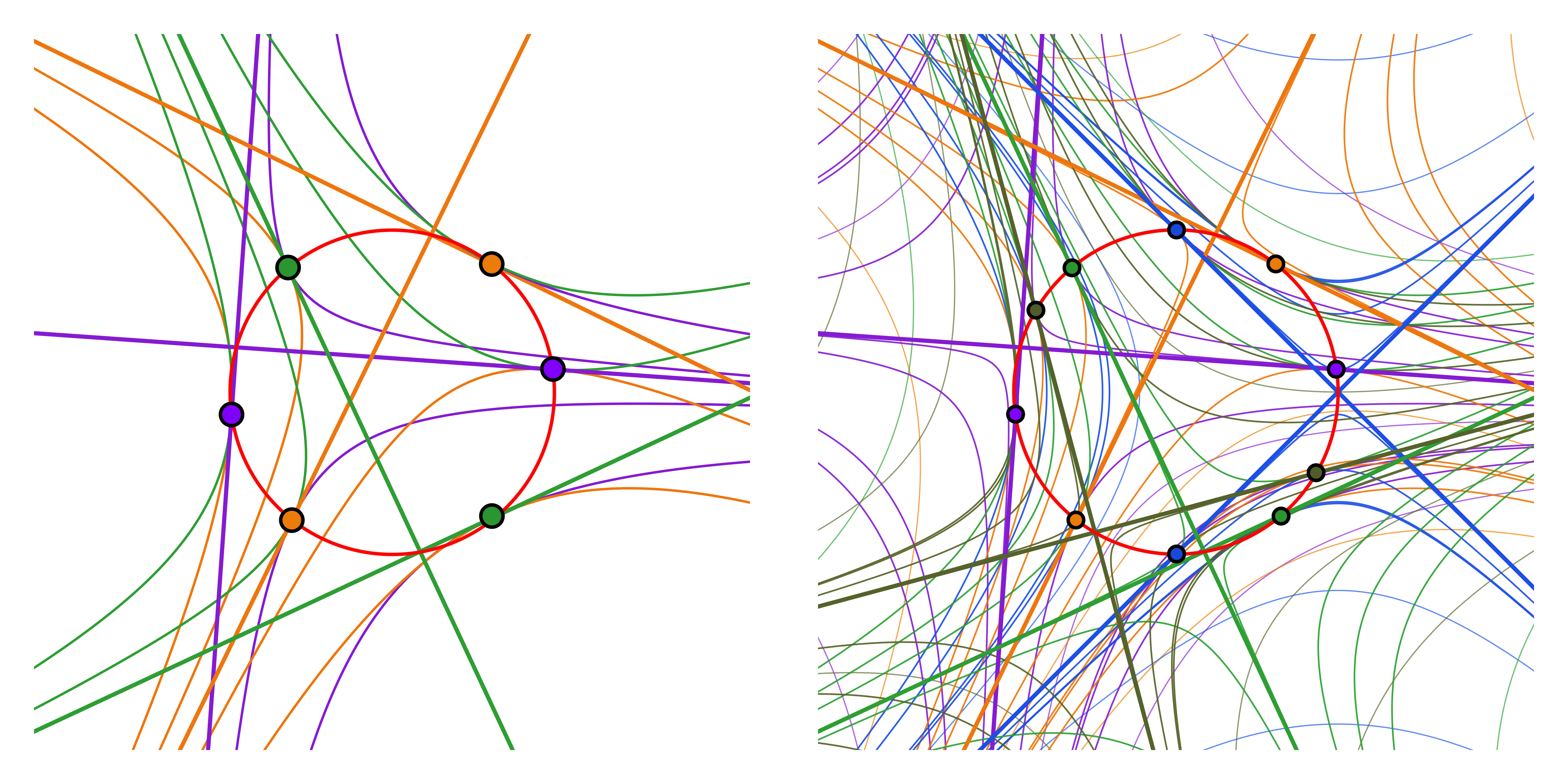}
\caption{Both figures consist of conics from the same quadrangle net, with fewer of the same conics shown at left. The red circle is the locus of bisection points (see Section 7). By  Corollary~\ref{common tangency bisection},  any line tangent at the same nonsingular point to two distinct conics in the net
 bisects every conic in the net at that point, as is the case for all the lines shown here.  Each such bisector  is tangent at its bisection point $m$ to each level conic $H=H(m)$ that is nonsingular there, where $[H]$ belongs to the net.
}
\label{halloween tangency figure}
\end{figure}

\begin{proof}
Write $C=[H]$ and $D=[K]$ for the two conics, and let $w$ be a direction vector for $\ell$. Since $C$ and $D$ are distinct, $H$ and $K$ are linearly independent. Moreover, $H(m)=K(m)=0$, so $1,H,K$ are linearly independent and hence form a basis of $\mathcal V(Q)\oplus\k\mathbin{\cdot}1$. Tangency implies $D_wH(m)=D_wK(m)=0$. Since $D_w1=0$, the linear functional $F\mapsto D_wF(m)$ vanishes on each element of the basis $1,H,K$ and hence on all of $\mathcal V(Q)\oplus\k\mathbin{\cdot}1$. By Proposition~\ref{reflection}, $\ell$ bisects every conic in $\mathcal N(Q)$ at $m$. For each $[F]\in\mathcal P(Q)$, the level conic $F=F(m)$ passes through $m$ and satisfies $D_w(F-F(m))(m)=0$, and so if the level conic is nonsingular at $m$, its tangent line at $m$ is $\ell$.
\end{proof}

%

%

The significance of the next theorem is that the conics in the net of a quadrangle are   those bisected by every side at its side midpoint. Thus the six sides, together with their side midpoints, {provide} a geometric test for membership in the net. In fact, it is even enough to test at four sides.

\begin{theorem}\label{conic net criterion}
Let $Q$ be a quadrangle and let $C=[H]$ be a conic. The following are equivalent.
\begin{enumerate}
\item[$(1)$] There are  two distinct pairs of opposite sides of $Q$ whose sides {bisect} $C$ at their side midpoints.
\item[$(2)$] Every side of $Q$ bisects $C$ at its side midpoint.
\item[$(3)$] The conic $C$ belongs to $\mathcal N(Q)$.
\end{enumerate}
Moreover, if $C\in\mathcal N(Q)$, then $C\in\mathcal P(Q)$ if and only if $H$ vanishes at a vertex of~$Q$.
\end{theorem}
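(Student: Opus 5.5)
The plan is to translate each bisection condition into a linear condition on $H$ via Proposition~\ref{reflection}, and then compare the resulting linear space with $\mathcal V(Q)\oplus\k\cdot 1$. Write $m_{ij}$ for the midpoint of $v_i,v_j$ and $w_{ij}$ for a direction vector of $L_{ij}$. By Proposition~\ref{reflection}, $L_{ij}$ bisects $C$ at $m_{ij}$ if and only if $D_{w_{ij}}H(m_{ij})=0$. If $v_i\ne v_j$, the pair $v_i,v_j$ lies on $L_{ij}$ with midpoint $m_{ij}$, so condition~(2) of that proposition shows this is equivalent to
\[
H(v_i)=H(v_j).
\]
The converse also holds here. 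For a degree $\le2$ polynomial restricted to $L_{ij}$, equality of the values at two distinct points symmetric about $m_{ij}$ forces the linear coefficient $b=D_{w_{ij}}H(m_{ij})$ to vanish, by the identity $H(m+tw)-H(m-tw)=2bt$ with $t\ne0$. If $v_i=v_j$, then $m_{ij}=v_i$, and the condition reads $D_{w_{ij}}H(v_i)=0$. This is exactly the tangency-or-singularity condition on the level conic $H=H(v_i)$ at $v_i$ appearing in Corollary~\ref{net vertex characterization}. So in all cases, "$L_{ij}$ bisects $C$ at its side midpoint" is equivalent to: $H(v_i)=H(v_j)$, together with $D_{w_{ij}}H(v_i)=0$ when $v_i=v_j$.

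\textbf{(3)$\Rightarrow$(2).} Corollary~\ref{net vertex characterization} gives equal vertex values and the derivative condition at each repeated vertex. The translation above then yields (2) directly. One must also handle $\deg H<2$. If $H$ is constant, then $C=[1]\in\mathcal N(Q)$ and no line crosses it, so everything holds trivially. Otherwise I would simply note that a member of $\mathcal N(Q)$ has the form $F+c$ with $F\in\mathcal V(Q)$, and run the argument with that representation, which needs no degree assumption.

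\textbf{(2)$\Rightarrow$(1)} is trivial.

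\textbf{(1)$\Rightarrow$(3)} is the main step. Take the two pairs of opposite sides to be $\{L_{12},L_{34}\}$ and $\{L_{13},L_{24}\}$. By the translation, these four sides give
\[
H(v_1)=H(v_2),\quad H(v_3)=H(v_4),\quad H(v_1)=H(v_3),\quad H(v_2)=H(v_4).
\]
So all four vertex values are equal, to $c$ say. Set $F=H-c$. This $F$ vanishes at all vertices, and it satisfies the derivative condition at every repeated vertex $v_i=v_j$ whose side $L_{ij}$ is among the four used. The obstacle is a repeated vertex whose side lies in the unused pair $\{L_{14},L_{23}\}$.

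To handle it, I would avoid Theorem~\ref{big pencil} and instead reuse the dimension count from its proof. Let $W'$ be the space of polynomials of degree at most $2$ that vanish at the four vertices and satisfy the derivative conditions only for the repeated vertices on the four chosen sides. I would show $\dim W'\le 2$, so that $W'=\mathcal V(Q)$.

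The concrete route is to restrict to a side $S$ among the chosen four that has side multiplicity one. By Proposition~\ref{unpack}, such a side can always be found in the chosen pairs. In case (b), for example, the two chosen pairs cannot both consist of copies of $T$ in their non-triple slots, so one of them supplies a side off $T$; a small case check over (a), (b), (c) confirms this. Then I mimic the $\rho$-argument of Theorem~\ref{big pencil}. The image of restriction to $S$ is at most one-dimensional. The kernel consists of $F_S M$, where $M$ has degree at most $1$ and vanishes on the opposite side; this uses only the conditions attached to $S$ and to its opposite side, both of which are among the chosen sides. Hence $F\in\mathcal V(Q)$, and therefore $C=[F+c]\in\mathcal N(Q)$. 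The degree-$<2$ case reduces similarly: $F$ of degree $\le2$ lying in $W'$ forces $F\in\mathcal V(Q)$, and if $F=0$ then $C=[1]$.

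\textbf{Final assertion.} Write $H=F+c$ with $F\in\mathcal V(Q)$. By Theorem~\ref{big pencil}, $F$ vanishes at every vertex, so $H(v_i)=c$ for each $i$. Thus $H$ vanishes at some vertex iff $c=0$, iff $[H]=[F]\in\mathcal P(Q)$. For the forward direction, if $[H]\in\mathcal P(Q)$ then $H$ is a scalar multiple of an element of $\mathcal V(Q)$, so $c=0$.
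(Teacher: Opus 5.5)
Your proof is correct, but in (1)$\Rightarrow$(3) it takes a detour around an obstacle that cannot occur.

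\textbf{The obstacle is vacuous.} If $v_1=v_4$, then, since no three vertices coincide, $L_{12}=L_{24}$ and $L_{34}=L_{13}$. If $v_2=v_3$, then $L_{12}=L_{13}$ and $L_{34}=L_{24}$. Either way $\{L_{12},L_{34}\}$ and $\{L_{13},L_{24}\}$ are the same line pair, contrary to hypothesis. This is the paper's key step: when $v_i=v_j$, the two pairs not containing $L_{ij}$ coincide, so two distinct pairs must include the one containing $L_{ij}$. Corollary~\ref{net vertex characterization} then applies at once. The paper only needs a separate short argument excluding $\deg H=1$, namely that two nonparallel side directions kill the gradient of a linear $H$.

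\textbf{Your route.} You instead rerun the $\rho$-count of Theorem~\ref{big pencil} on a multiplicity-one side among the chosen pairs. This is valid, and it has the small advantage of treating all $H$ of degree at most $2$ at once, so no degree-$1$ argument is needed. However, your case check for that side uses distinctness \emph{as line pairs} essentially: in case~(a), two index-distinct pairs may both equal $\AA$, whose lines are double sides. That same distinctness forces $W'=W_Q$, so you are in effect reproving Theorem~\ref{big pencil}.

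\textbf{Make the reading of ``distinct'' explicit.} With only index-distinctness the statement is false. Take $v_1=v_2=(0,0)$, $v_3=(1,0)$, $v_4=(0,1)$, and let $L_{12}$ be the line $Y=X$. The conic $[X^2+Y^2-X-Y]$ is bisected at the side midpoints by $L_{13},L_{24},L_{14},L_{23}$, but not by $L_{12}$.

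Your (3)$\Rightarrow$(2) via Corollary~\ref{net vertex characterization} and the translation is fine. The paper gets it more directly from Corollary~\ref{conic butterfly property}, because each side bisects $Q$ at its side midpoint.
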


\begin{proof}
Write the vertices, sides, and side midpoints of $Q$ as $v_i$, $L_{ij}$, and $m_{ij}$, respectively.

 $(1)\Rightarrow(3)$: Assume~$(1)$, and choose two such pairs of opposite sides $\AA$ and $\BB$. If $H$ is constant, then $C=[1]\in\mathcal N(Q)$, so we can  assume that $H$ is nonconstant. We first show that $H$ cannot have degree~$1$ and so must have degree~$2$.
Suppose that $H$ has degree~$1$. Proposition~\ref{reflection} shows that the nonzero linear functional $w\mapsto D_wH$ vanishes on the directions of all lines in $\AA$ or $\BB$. These include two nonparallel lines since if all these lines were parallel, the homogeneous quadratic parts of $F_{\AA}$ and $F_{\BB}$ would be proportional. Since $F_{\AA}$ and $F_{\BB}$ form a basis of the two-dimensional space $\mathcal V(Q)$, some nonzero element of $\mathcal V(Q)$ would then have degree at most~$1$, contrary to Proposition~\ref{pencil independent}. Direction vectors for two nonparallel lines form a basis of $\k^2$, so a linear functional vanishing on both must be zero. This contradicts the fact that $w\mapsto D_wH$ is nonzero when $H$ has degree~$1$. Hence $H$ has degree~$2$.

It remains to verify the vertex  and tangency conditions in Corollary~\ref{net vertex characterization}. For each side $L_{ij}$ in $\AA$ or $\BB$, Proposition~\ref{reflection} {implies} $H(v_i)=H(v_j)$, and it follows that the four vertex values have a common value $\lambda$.
Suppose $v_i=v_j$, and write $\{i,j,k,l\}=\{1,2,3,4\}$. Since no three vertices of a quadrangle coincide, $v_k$ and $v_l$ are distinct from $v_i$. Hence $L_{ik}=L_{jk}$ and $L_{il}=L_{jl}$, so the two pairs of opposite sides not containing $L_{ij}$ are equal. Since $\AA$ and $\BB$ are distinct, one of the chosen pairs contains $L_{ij}$. Proposition~\ref{reflection} therefore shows that the derivative of $H|_{L_{ij}}$ vanishes at $m_{ij}=v_i$. When the level conic $H=\lambda$ is nonsingular at $v_i$, this says  that $L_{ij}$ is tangent to it there. Corollary~\ref{net vertex characterization} now implies $C\in\mathcal N(Q)$, which proves $(1)\Rightarrow(3)$.

$(3)\Rightarrow(2)$: Suppose   $C\in\mathcal N(Q)$. Each side $L_{ij}$ bisects $Q$  at $m_{ij}$. Corollary~\ref{conic butterfly property} therefore shows that $L_{ij}$ bisects every member of $\mathcal N(Q)$ at $m_{ij}$, and in particular it bisects $C$ there.

$(2)\Rightarrow(1)$: This is clear.

It remains to prove the final assertion.  Suppose  $C\in\mathcal N(Q)$, and write $H=K+\lambda$ with $K\in\mathcal V(Q)$ and $\lambda\in\k$. Then $H(v_i)=\lambda$ for every $i$.  Proposition~\ref{pencil independent} shows that no nonzero constant belongs to $\mathcal V(Q)$, so  $C=[H]\in\mathcal P(Q)$ exactly when $\lambda=0$, equivalently, when $H$ vanishes at one, and hence every, vertex  of $Q$.
\end{proof}

\begin{definition} 
Two quadrangles are \emph{entangled} if they have the same net.
\end{definition}

Using Theorem~\ref{conic net criterion}, we can {state} our first geometric characterization of entanglement. See Figure~\ref{entangled quads} for an illustration of the corollary.

\begin{corollary}\label{entanglement bisection}
 Two quadrangles $Q_1$ and $Q_2$ are entangled if and only if every side of $Q_1$ bisects $Q_2$ at its side midpoint. In particular, this condition is symmetric in $Q_1$ and $Q_2$.
\end{corollary}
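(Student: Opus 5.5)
The plan is to reduce everything to Theorem~\ref{conic net criterion} together with Corollary~\ref{conic butterfly property}. I will use the following reading of the condition: a side $L$ of $Q_1$ with side midpoint $m$ \emph{bisects $Q_2$ at $m$} when it bisects each pair of opposite sides of $Q_2$ at $m$. By Corollary~\ref{conic butterfly property}, this is the same as saying that $L$ bisects every conic in $\mathcal N(Q_2)$ at $m$.

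For the forward direction, suppose $\mathcal N(Q_1)=\mathcal N(Q_2)$ and take a side $L$ of $Q_1$ with side midpoint $m$. Each side of $Q_1$ bisects $Q_1$ at its side midpoint, so Corollary~\ref{conic butterfly property} shows that $L$ bisects every conic of $\mathcal N(Q_1)=\mathcal N(Q_2)$ at $m$. In particular $L$ bisects every opposite-side pair of $Q_2$ at $m$, since these pairs lie in $\mathcal P(Q_2)\subseteq\mathcal N(Q_2)$. So $L$ bisects $Q_2$ at $m$.

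For the converse, suppose every side of $Q_1$ bisects $Q_2$ at its side midpoint. Fix a conic $C\in\mathcal N(Q_2)$. By Corollary~\ref{conic butterfly property}, each side of $Q_1$ bisects $C$ at its side midpoint. This is condition~(2) of Theorem~\ref{conic net criterion} for $Q_1$, so $C\in\mathcal N(Q_1)$. Hence $\mathcal N(Q_2)\subseteq\mathcal N(Q_1)$. Proposition~\ref{pencil independent} shows that both nets are projective planes, that is, projectivizations of three-dimensional subspaces of $\k[X,Y]_{\le 2}$. The inclusion of nets gives an inclusion of these subspaces, and equal dimension forces equality, so $\mathcal N(Q_1)=\mathcal N(Q_2)$. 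Symmetry is then automatic: the condition is equivalent to equality of nets, which is symmetric, so every side of $Q_2$ also bisects $Q_1$ at its side midpoint.

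The only delicate step is the transfer from "bisects $Q_2$" to "bisects every conic in $\mathcal N(Q_2)$". This needs the convention that bisecting a family at $m$ is encoded by the linear condition $D_wF(m)=0$, including the cases where the line lies inside an opposite-side pair or crosses none of them. Corollary~\ref{conic butterfly property} and Proposition~\ref{reflection} already handle this through the linear functional $\Phi_m$, so I do not expect a real obstacle beyond checking that the conventions line up.
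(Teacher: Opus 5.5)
Your proof is correct and follows essentially the paper's argument. Both directions reduce to Theorem~\ref{conic net criterion}, whose implication $(3)\Rightarrow(2)$ is itself Corollary~\ref{conic butterfly property}, and the converse finishes with the dimension count on the three-dimensional spaces $\mathcal V(Q_i)\oplus\k\mathbin{\cdot}1$. The only cosmetic difference is in the converse: the paper applies the theorem just to two opposite-side pairs $\AA,\BB$ of $Q_2$ and uses $\mathcal V(Q_2)\oplus\k\mathbin{\cdot}1=\operatorname{span}\{1,F_{\AA},F_{\BB}\}$, instead of first transferring bisection to all of $\mathcal N(Q_2)$ via Corollary~\ref{conic butterfly property}.
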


\begin{proof}
Write  $\mathcal U_i=\mathcal V(Q_i)\oplus\k\mathbin{\cdot}1$ and $\mathcal N_i=\mathcal N(Q_i)$ for $i=1,2$. Suppose first that every side of $Q_1$ bisects $Q_2$ at its side midpoint. Choose  two distinct pairs of opposite sides $\AA$ and $\BB$ of $Q_2$, and let $F_{\AA}$ and $F_{\BB}$ be line-pair polynomials defining them. Since every side of $Q_1$ bisects each of the conics $[F_{\AA}]$ and $[F_{\BB}]$ at its side midpoint, Theorem~\ref{conic net criterion} {implies} $F_{\AA},F_{\BB}\in\mathcal U_1$. By Proposition~\ref{pencil independent},
$$\mathcal U_2=\operatorname{span}\{1,F_{\AA},F_{\BB}\} \subseteq\mathcal U_1.$$
Both spaces have dimension~$3$, so they are equal and the quadrangles are entangled.

Conversely, suppose the quadrangles are entangled, so that $\mathcal N_1=\mathcal N_2$. Each pair of opposite sides of $Q_2$ defines a conic in $\mathcal N_2$, and hence in $\mathcal N_1$. Theorem~\ref{conic net criterion} shows that every side of $Q_1$ bisects each of these pairs at its side midpoint. Thus every side of $Q_1$ bisects $Q_2$ at its side midpoint.
\end{proof}

\section{The bisector field of a quadrangle}\label{bisector field section}

In this section, we show that the line pairs in the net $\mathcal N(Q)$ form a bisector field $\mathcal B(Q)$ whose component lines are the bisectors of $Q$. To establish this connection, we relate quadrangles without a triple side to complete quadrilaterals having the same bisectors and bisector field, and treat triple-side quadrangles separately.

\begin{definition}
A \emph{bisector field} is a family $\mathcal F$ of line pairs satisfying three conditions. Every component line of a pair in $\mathcal F$ bisects {the family} $\mathcal F$. The family $\mathcal F$ is not one in which every pair is a translation of every other pair, all its component lines pass through one common finite point, or all its component lines are parallel. Finally, $\mathcal F$ is maximal among families having the first property.
\end{definition}

See Figures~\ref{cardioid}, \ref{triangle field image} and \ref{trapezoid}  for an example of a bisector field. The following proposition {identifies ${\mathcal B}(Q)$ as a bisector field}.

\begin{proposition} \label{pencil} For a quadrangle $Q$,
$\mathcal B(Q)$ is a bisector field.
\end{proposition}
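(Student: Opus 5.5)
We have to verify the three conditions of a bisector field for $\mathcal B(Q)$: self-bisection, nondegeneracy, and maximality.

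\emph{Self-bisection.} Let $\mathsf L=\{L,L'\}\in\mathcal B(Q)$ with defining polynomial $F_{\mathsf L}=rF_\AA+sF_\BB+t$. Its center, when $L$ and $L'$ meet in a finite point $m$, satisfies $\nabla F_{\mathsf L}(m)=0$. By Proposition~\ref{reflection}, $L$ then bisects $[F_{\mathsf L}]$ at $m$. I also want $L$ to bisect a second member of the net at $m$. For this I would use the level conic $F_{\mathsf L}-F_{\mathsf L}(p)$ through points $p\in L$. The cleaner route is the functional $\Phi_m\colon F\mapsto D_wF(m)$ on $\mathcal U=\mathcal V(Q)\oplus\k\cdot1$, with $w$ a direction of $L$. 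It kills $1$ and $F_{\mathsf L}$, so it kills a two-dimensional subspace of $\mathcal U$.

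That alone does not give all of $\mathcal U$, so I will argue as follows. $F_{\mathsf L}$ vanishes identically on $L$, so $D_wF_{\mathsf L}\equiv0$ along $L$. The functional $F\mapsto D_wF|_L$ is linear on $\mathcal U$ with values in polynomials of degree at most $1$ on $L$. Its zeros along $L$ give the bisection point for each member, and that point must be constant in order to get a bisector.

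The key idea is to apply Corollary~\ref{conic butterfly property}. It suffices to show that $L$ bisects two distinct members of $\mathcal P(Q)$ at a common point. Since $\mathcal U$ is three-dimensional and $\Phi_m$ kills $1$ and $F_{\mathsf L}$, it kills $\operatorname{span}\{1,F_{\mathsf L}\}$. Projecting to $\mathcal V(Q)$ along constants gives a one-dimensional subspace $\k K$, and $\Phi_m(K)=0$. We need one more member.

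I would find it by choosing $p\in L\setminus\{m\}$ and using condition~(2) of Proposition~\ref{reflection}. For $F\in\mathcal U$, the map $t\mapsto F(m+tw)$ is quadratic in $t$. Its linear coefficient is $\Phi_m(F)$, and its quadratic coefficient $a(F)$ depends only on the quadratic part. The obstacle is that $\Phi_m$ might not vanish on all of $\mathcal U$.

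Here I would use the nondegeneracy of the pencil together with dimension counting. The map $\mathcal U\to\k[t]_{\le2}$, $F\mapsto F(m+tw)$, sends constants to constants, and $F_{\mathsf L}$ to $0$. So it factors through $\mathcal U/\k F_{\mathsf L}$, which is two-dimensional and contains the constants. Its image is therefore spanned by $1$ and the image of a single $K$. Hence the bisection point of $L$ is the same for every member of $\mathcal N(Q)$ that $L$ crosses, namely the critical point of $K(m+tw)$. When $L$ meets $L'$ this point is $m$; when $L\parallel L'$ it is whatever point it is, possibly at infinity. In every case $L$ bisects $\mathcal N(Q)$, and hence bisects $\mathcal B(Q)$. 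This dimension argument is the heart of the proof, and it is where I expect the subtleties: the case $L=L'$, and the case where $L$ crosses nothing.

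\emph{Nondegeneracy.} The opposite-side pairs lie in $\mathcal B(Q)$, and they contain the three sides forming a nondegenerate triangle. So the component lines are neither all parallel nor all concurrent. The pairs are also not all translates of each other, because two pairs of opposite sides have non-proportional quadratic parts (Proposition~\ref{pencil independent}).

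\emph{Maximality.} Suppose a family $\mathcal F\supseteq\mathcal B(Q)$ has every component line bisecting $\mathcal F$, and let $\mathsf M\in\mathcal F$. Each side of $Q$ is a component of some pair in $\mathcal B(Q)\subseteq\mathcal F$. It therefore bisects $\mathcal F$, and in particular the opposite-side pair containing it, so its bisection point is its side midpoint (when it crosses something). Thus every side bisects $[F_{\mathsf M}]$ at its side midpoint. Theorem~\ref{conic net criterion} then gives $[F_{\mathsf M}]\in\mathcal N(Q)$, hence $\mathsf M\in\mathcal B(Q)$. The one care point is a side crossing no member. This is impossible, since a side crosses the other opposite-side pairs: it is not parallel to both lines of a pair it meets at a vertex unless that pair shares it, and the triangle condition rules this out for at least one pair.
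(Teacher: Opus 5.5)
Your route differs from the paper's. The paper identifies $\mathcal B(Q)$ with the asymptotic pencil of $\mathcal P(Q)$ and cites \cite[Theorem~6.3]{OWPencil}; you verify the three axioms directly, which is a reasonable alternative.

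\textbf{What works.} Your self-bisection argument is correct. Since $F_{\mathsf L}$ vanishes on $L$, restriction to $L$ maps $\mathcal V(Q)\oplus\k\cdot 1$ into $\operatorname{span}\{1,k(t)\}$ for a single $k$, so every member of the net that $L$ crosses has the same bisection point. Delete the remark that this point is $L\cap L'$, because it is false. For $\{L_{12},L_{34}\}$, the bisection point of $L_{12}$ is the side midpoint $m_{12}$, not the diagonal point $L_{12}\cap L_{34}$. Your abandoned $\Phi_m$ attempt rested on the same wrong expectation. The nondegeneracy step is fine.

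\textbf{The gap: maximality for triple-side quadrangles.} You claim that every side crosses some pair of opposite sides. This fails for the triple side $T$, which belongs to all three pairs $\{T,L_i\}$. In fact $\mathcal V(Q)=F_TU_v$ (notation of the proof of Proposition~\ref{pencil independent}) vanishes on $T$. So every member of $\mathcal N(Q)$ is constant on $T$, and $T$ crosses no member of $\mathcal B(Q)$ at all.

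Consequently, the hypothesis that $T$ bisects $\mathcal F$ does not tie its bisection point for a new pair $\mathsf M$ to the side midpoints on $T$. You cannot invoke Theorem~\ref{conic net criterion} either, since every pair of opposite sides contains $T$.

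The remaining sides do not suffice in general. Suppose $v_1=v_2=p$ and $v_3=q$ lie on $T$. Then the non-triple sides are just the two lines $pv$ and $qv$. Their two midpoint conditions cut out a four-dimensional space of polynomials, strictly larger than the net, and it contains line pairs outside $\mathcal B(Q)$.

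\textbf{A repair.} Use other component lines of $\mathcal B(Q)$.
\begin{itemize}
\item Each line $N$ through $v$ not parallel to $T$ lies in $\{T,N\}\in\mathcal B(Q)$. It crosses $\{T,N'\}$ for any other such $N'$, so it bisects $\mathcal F$ at the midpoint of $v$ and $N\cap T$.
\item Choose coordinates with $T\colon Y=0$ and $v=(0,2)$, and write $F_{\mathsf M}=aX^2+bXY+cY^2+rX+sY+t$. For $N$ through $(u,0)$, Proposition~\ref{reflection} gives $au^2+ru-2(2c+s)=0$ for every $u\in\k$.
\item Since $|\k|\geq 3$, this forces $a=r=0$ and $s=-2c$. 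That is, $F_{\mathsf M}\in\operatorname{span}\{1,XY,Y^2-2Y\}=\mathcal V(Q)\oplus\k\cdot 1$.
\end{itemize}
Equivalently, $F_{\mathsf M}$ is then constant on $T$, so $T$ bisects $\mathsf M$ at every point, and your appeal to Theorem~\ref{conic net criterion} goes through. With this case added, your argument gives a self-contained proof from Section~\ref{bisection section}, without appealing to \cite{OWPencil}.
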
 

\begin{proof}
A line pair $\mathsf L$ is a {degeneration} of a conic $C$ if $C=[F_{\mathsf L}+c]$ for some line-pair polynomial $F_{\mathsf L}$ defining $\mathsf L$ and some $c\in\k$. If the lines in $\mathsf L$ are nonparallel, then $\mathsf L$ is the asymptote pair of the hyperbola $C$ and is its unique degeneration. If they are parallel, the degenerations are  the pairs of parallel lines having the same midline\footnote{If two lines in a line pair are parallel and have equations $f=a$ and $f=b$, their \emph{midline} is the line $f=(a+b)/2$.} as $\mathsf L$, including the midline paired with itself \cite[Proposition~2.2]{OWPencil}. By \cite[Definitions~2.1 and~3.1 and Proposition~3.2]{OWPencil},  the set of degenerations of the members in $\mathcal P(Q)$  (called the asymptotic pencil in \cite{OWPencil}) is   the set of line pairs in $\mathcal N(Q)$. This set contains the three pairs of opposite sides of $Q$. Since $Q$ is a quadrangle, the lines in the pairs include three nonconcurrent, pairwise nonparallel lines, which excludes each of the three elementary families in the definition. The proposition now follows from \cite[Theorem~6.3]{OWPencil}.
\end{proof}

Accordingly, we call $\mathcal B(Q)$ the \emph{bisector field of $Q$}.
In the next section, Corollary~\ref{bisector field} will show that   $\mathcal B(Q)$ is the family of bisector pairs of $Q$. For this purpose, we use a bridge to the complete quadrilaterals studied in \cite{OWQuad}, where a \emph{complete quadrilateral} consists of four distinct lines arranged into two pairs of opposite sides such that not all four lines are concurrent and every pair of adjacent sides meets. Its four vertices and two diagonals are then determined, and adjoining the diagonals produces a quadrangle.  If the four vertices of the quadrilateral are distinct, the proof of \cite[Proposition~3.4]{OWQuad} shows that the quadrilateral and the completed quadrangle have the same bisectors, with the same {bisection points}. If two vertices of the quadrilateral coincide, the same conclusion follows directly since its diagonal pair repeats one pair of opposite sides.

Part of the reason for bridging to the quadrilateral case is to make use of the following lemma, which is collected from some of the main results in \cite{OWQuad,OWPencil}. 

 \begin{lemma}\label{quadrilateral bisector pairing}
Let $Q_0$ be a complete quadrilateral. The lines occurring in its bisector field are  its bisectors, and every bisector belongs to a unique pair in the field.
\end{lemma}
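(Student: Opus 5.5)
Since the lemma is stated as collected from \cite{OWQuad,OWPencil}, the plan is to assemble it from those sources rather than reprove it. First I will make sure the bisector field of $Q_0$ is well defined. Take $\AA$ and $\BB$ to be the two pairs of opposite sides of $Q_0$, and let $\mathcal F$ be the line pairs among the degenerations of the conics $[rF_\AA+sF_\BB]$. By \cite[Proposition~3.2]{OWPencil}, $\mathcal F$ is the set of line pairs in $\mathbb P(\operatorname{span}\{F_\AA,F_\BB,1\})$. The quadrilateral hypotheses exclude the elementary families: the four lines are distinct, not all concurrent, and adjacent sides meet. Hence \cite[Theorem~6.3]{OWPencil} shows that $\mathcal F$ is a bisector field. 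This is the same argument as in Proposition~\ref{pencil}.

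Second, I will show that every component line of a pair in $\mathcal F$ is a bisector of $Q_0$. By the definition of a bisector field, each component line $\ell$ bisects $\mathcal F$ at some point $m$. The pairs $\AA$ and $\BB$ lie in $\mathcal F$, so $\ell$ bisects both at $m$, which makes $\ell$ a bisector of $Q_0$.

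Third, for the converse I will start from a bisector $\ell$ of $Q_0$ with bisection point $m$ and a direction $w$. Proposition~\ref{reflection} gives $D_wF_\AA(m)=D_wF_\BB(m)=0$. The same functional argument as in Corollary~\ref{conic butterfly property} then shows that $\ell$ bisects every member of the net at $m$.

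Next I need a pair in $\mathcal F$ containing $\ell$. The net is a projective plane. Requiring a member $[H]$ to vanish identically on $\ell$ imposes at most three linear conditions, namely the coefficients of $H(m+tw)$ in $t$. The $t$-coefficient condition already holds automatically, so the conditions cut out a nonempty linear subsystem. Any member vanishing on $\ell$ has the form $F_\ell M$ with $\deg M\le 1$. I will argue that $M$ is nonconstant, since otherwise the net would contain a degree-$1$ element, which contradicts Proposition~\ref{pencil independent}. This means $\{\ell,M\}$ is a pair in $\mathcal F$.

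Fourth, for uniqueness, suppose two distinct pairs $\{\ell,M\}$ and $\{\ell,M'\}$ both lie in $\mathcal F$. Then $F_\ell M$, $F_\ell M'$ and $1$ would have to span the whole $3$-dimensional space, or else a combination would collapse. In the collapsing case a combination $F_\ell(aM+bM')$ lies in the span of $1$, which forces $M'$ to be proportional to $M$ modulo constants, so $M'=M+c$. I then have to rule out an $F_\ell(M+c)$ family unless the pairs coincide. I expect this to be the main obstacle, and it is exactly where I will rely on the pairing theorem of \cite{OWQuad}, which asserts that each bisector of a quadrilateral belongs to a unique pair. If I cannot close it directly, I will cite that theorem for both the existence of the pair and its uniqueness.
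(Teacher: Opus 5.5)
Your strategy agrees with the paper's, whose proof is only a citation of \cite[Lemma~6.1, Theorem~6.4, Corollary~6.7]{OWQuad} and \cite[Theorem~6.3]{OWPencil}. The difference is that you prove part of the lemma by hand, more elementarily than the citation does. Your second step is correct: component lines are bisectors because $\AA,\BB\in\mathcal F$. Your existence argument is also correct. On the $3$-dimensional space $U=\operatorname{span}\{1,F_\AA,F_\BB\}$ the $t$-coefficient of $H(m+tw)$ vanishes identically, so killing the other two coefficients leaves a nonzero $H=F_\ell M$. Then $M$ is nonconstant because $U$ has no element of degree one.

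Two small repairs are needed there. First, observe that the bisection point $m$ is finite before invoking Proposition~\ref{reflection}; this holds because no line of $\AA$ is parallel to a line of $\BB$, so $\ell$ meets some pair of opposite sides in two finite points. Second, prove the absence of degree-one elements directly for $Q_0$, since Proposition~\ref{pencil independent} is stated for quadrangles. This is easy: the quadratic parts of $F_\AA$ and $F_\BB$ are independent because each line of $\AA$ meets each line of $\BB$.

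Your uniqueness paragraph, however, is aimed at the wrong case. The ``collapsing case'' cannot occur, since $F_\ell(aM+bM')$ is constant only if $aM+bM'=0$. Your claim that a collapse forces $M'=M+c$ is therefore not right. For distinct pairs $\{\ell,M\}\neq\{\ell,M'\}$, the polynomials $1$, $F_\ell M$, $F_\ell M'$ are automatically linearly independent. The case to exclude is exactly the one in which they span $U$.

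That case is easy to rule out. Every element of $\k\oplus F_\ell\operatorname{span}\{M,M'\}$ is constant on $\ell$, so $F_\AA|_\ell$ and $F_\BB|_\ell$ would be constant. This happens only if $\ell$ is a line of $\AA$ or is parallel to both lines of $\AA$, and likewise for $\BB$. Each combination makes a line of $\AA$ equal or parallel to a line of $\BB$, which is impossible since these are distinct lines that meet. For a triple-side quadrangle the net really has the form $\k\oplus F_T U_v$, with $U_v$ the affine linear polynomials vanishing at the vertex off $T$, which is why $T$ lies in many pairs there.

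With this fix, the lemma follows from your argument using only the facts from \cite{OWPencil} invoked in your first step. If you instead fall back on \cite{OWQuad} for uniqueness, you also need \cite[Theorem~6.4]{OWQuad} to know that the bisector field treated there is your $\mathcal F$.
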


\begin{proof}
This follows from \cite[Lemma~6.1, Theorem~6.4, and Corollary~6.7]{OWQuad} and \cite[Theorem~6.3]{OWPencil}.
\end{proof}

 Theorem~\ref{weird} shows  a quadrangle that is not a triple-side quadrangle  can be traded for a complete quadrilateral for purposes of working with a net. The triple-side case behaves differently and must be handled directly. The following lemma {explicitly describes} both its bisectors and its bisector field; see Figure~\ref{triangle field image}.

\begin{lemma} \label{triangle bf}
Let $Q$ be a triple-side quadrangle, let $v$ be the vertex of $Q$ that is not on the triple side $T$  of $Q$, and let $R$ be the line through $v$ parallel to $T$.
\begin{itemize}
\item[$(1)$]   The bisectors of $Q$ are  the lines through $v$ and the lines parallel to  $T$.  Every point of $T$ is a {bisection point} of $T$. If $\ell\parallel T$ and $\ell\ne T$, then the point at infinity on $\ell$ is its unique {bisection point}. If $\ell$ passes through $v$ and is not parallel to $T$, then its unique {bisection point} is the midpoint of $v$ and $\ell\cap T$.
\item[$(2)$] The nonparallel members of $\mathcal B(Q)$ are   the pairs $\{T,M\}$, where $M$ ranges over the lines through $v$ that are not parallel to $T$. Its remaining members are   the pairs of lines parallel to $T$ having the same midline as $\{T,R\}$; these include $\{T,R\}$ and the double midline.
\end{itemize}
\end{lemma}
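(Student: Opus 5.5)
The plan is to put coordinates on the triple-side quadrangle and compute everything from the pencil space identified in the proof of Proposition~\ref{pencil independent}. After an affine change of coordinates, which preserves midpoints, parallelism and bisection, we may take $T$ to be $Y=0$ and $v=(0,1)$, so that $R$ is $Y=1$. By the proof of Proposition~\ref{pencil independent}, case~(b), $\mathcal V(Q)=F_TU_v=\{Y(aX+b(Y-1)) : a,b\in\k\}$. Hence
\[
\mathcal N(Q)=\{[aXY+bY^2-bY+c]\}.
\]

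For part~(1), Corollary~\ref{conic butterfly property} says $\ell$ bisects $Q$ at $m$ exactly when $D_wF(m)=0$ for all $F$ in a basis of $\mathcal V(Q)$. We take the basis $XY$ and $Y^2-Y$. With $w=(w_1,w_2)$ and $m=(x,y)$, the two conditions are
\[
w_1y+w_2x=0,\qquad w_2(2y-1)=0.
\]
We then split into cases.
\begin{itemize}
\item If $w_2=0$, so $\ell\parallel T$, the conditions reduce to $y=0$. For $\ell=T$ every point of $T$ satisfies this, consistent with $T$ lying in every pair of opposite sides, so it crosses none of them. For $\ell\parallel T$ with $\ell\neq T$ there is no finite solution. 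We then check via Proposition~\ref{reflection}, in the case $a=0$, $b\neq0$ of its proof, that $\ell$ meets each opposite-side pair once finitely and once at infinity. So its bisection point is at infinity, and it is a bisector.
\item If $w_2\ne0$, the conditions force $y=1/2$ and $x=-w_1/(2w_2)$. The line through $m$ with direction $w$ meets $Y=1$ at $x+w_1/(2w_2)=0$, that is, at $v$. So $\ell$ passes through $v$, and $m$ is the midpoint of $v$ and $\ell\cap T=(-w_1/w_2,0)$.
\end{itemize}
Conversely, each such line does satisfy the conditions, which proves~(1).

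For part~(2), we decide when $aXY+bY^2-bY+c$ factors over $\k$ as a product of two nonconstant linear polynomials.
\begin{itemize}
\item If $a\neq0$, the quadratic part $Y(aX+bY)$ has nonparallel factors. We write the polynomial as $(Y+\alpha)(aX+bY+\beta)$ and match coefficients: the $X$ coefficient gives $\alpha a=0$, so $\alpha=0$, and then $c=0$ and $\beta=-b$. This yields $\{T,M\}$ with $M: aX+b(Y-1)=0$, which is exactly a line through $v$ not parallel to $T$.
\item If $a=0$ and $b\neq0$, we get $b(Y^2-Y)+c$. This factors over $\k$ precisely when its roots $y_1,y_2$ lie in $\k$. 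Then $y_1+y_2=1$, which gives the pairs of horizontal lines with midline $Y=1/2$, the midline of $\{T,R\}$. These include $\{T,R\}$ at $c=0$ and the double line $Y=1/2$ at $c=b/4$.
\item If $a=b=0$ the conic is constant, so it is not a line pair.
\end{itemize}
Every pair of horizontal lines with midline $Y=1/2$ arises this way by taking $c=-by_1y_2$.

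Each step is routine. The only delicate point is the bookkeeping in~(1) for lines parallel to $T$: distinguishing $T$, which crosses no member of $Q$'s opposite-side pairs, from the other parallels, whose bisection point is at infinity. We handle this by returning to the three cases in the proof of Proposition~\ref{reflection} rather than relying on the derivative criterion alone.
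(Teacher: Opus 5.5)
You have a gap in part~(1), where you conclude that every bisector not parallel to $T$ passes through $v$. Proposition~\ref{reflection} and Corollary~\ref{conic butterfly property} concern only finite points $m\in\ell$. Your $w_2\ne0$ case therefore proves only that a line not parallel to $T$ \emph{having a finite bisection point} passes through $v$. A bisector may have its bisection point at infinity, and you never exclude this for lines not parallel to $T$. This is the same bookkeeping you handled carefully for lines parallel to $T$.

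The lines at risk are those parallel to a non-triple side $L_i$ but not through $v$. Such a line meets $\{T,L_i\}$ at $\ell\cap T$ and at infinity, so its bisection point for that pair is infinite. Your derivative computation says nothing about whether it bisects $Q$ there.

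The missing step is the one the paper's proof supplies ("If $\ell$ is parallel to some $L_i$, choose $L_j\ne L_i$\ldots"):
\begin{enumerate}
\item At least two non-triple sides are distinct lines through $v$, since otherwise $Q$ would have only two distinct sides. They therefore have different directions.
\item A line $\ell$ with $v\notin\ell$ and not parallel to $T$ is therefore nonparallel to one of them, say $L_j$.
\item Then $\ell\notin\{T,L_j\}$, and $\ell$ meets $\{T,L_j\}$ in two finite points. In your coordinates, the quadratic part $Y(a_jX+b_jY)$ of its polynomial is nonzero at $w$.
\item So any bisection point of $\ell$ for $Q$ is finite, and your computation then forces $v\in\ell$, a contradiction.
\end{enumerate}

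With that added, the argument is complete. There is also a sign slip in part~(2): the pair $\{Y=y_1,Y=y_2\}$ with $y_1+y_2=1$ comes from $c=by_1y_2$, not $c=-by_1y_2$. Your own value $c=b/4$ for the double midline confirms this.

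Your route differs from the paper's mainly in part~(2). The paper obtains the line pairs of $\mathcal N(Q)$ as degenerations of the pencil members $\{T,M\}$, using the description of degenerations imported from \cite{OWPencil} in the proof of Proposition~\ref{pencil}. Your direct factorization of $aXY+bY^2-bY+c$ is self-contained and avoids that citation. In part~(1), the paper argues synthetically with intersection points and reflections about $m$, while your derivative equations do the same work in coordinates.
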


\begin{figure}[ht] 
     \begin{center}
\includegraphics[width=.95\linewidth]{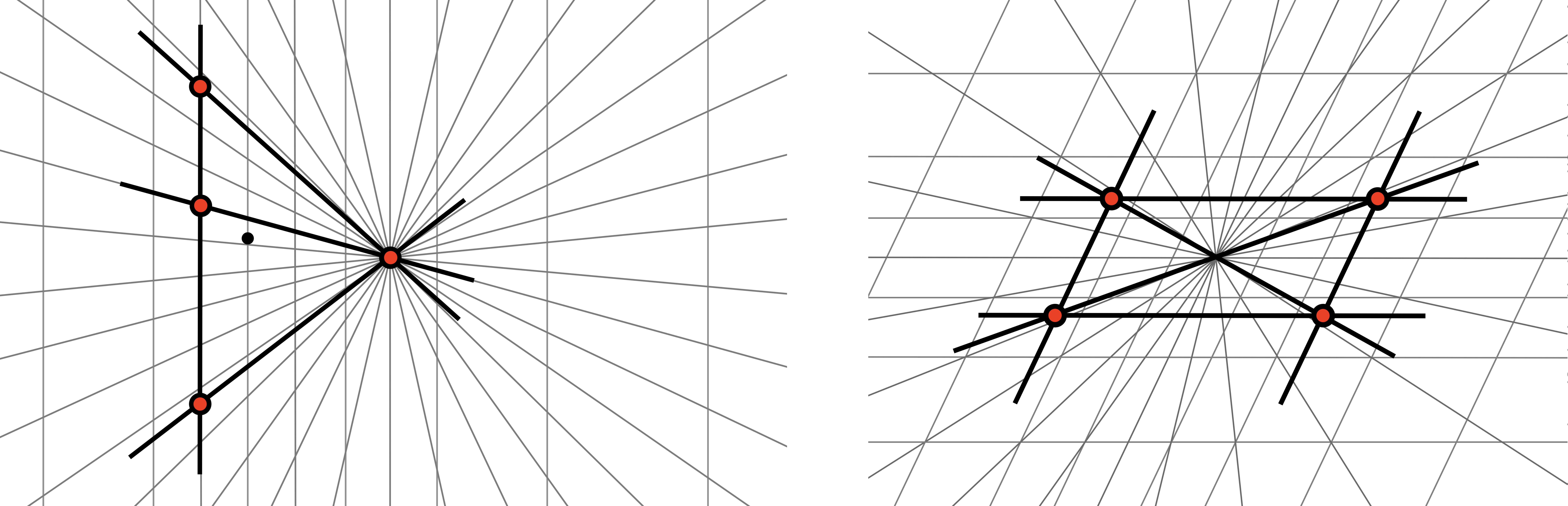}
 \end{center}
\caption{Bisectors (in gray) of two special quadrangles over $\mathbb R$. On the left is a triple-side quadrangle. The red points are vertices and the small point is the centroid. On the right is a parallelogram. }
 \label{triangle field image}
 \label{parallelogram figure}
\end{figure}

\begin{proof}
{
Let $L_1,L_2,L_3$ be the non-triple sides of $Q$. Each passes through $v$, and the pairs of opposite sides are $\{T,L_i\}$, $1\leq i\leq3$. At least two of the lines $L_i$ are distinct, since otherwise  $Q$ would have only two distinct sides and would not be a quadrangle. The line $T$ crosses no pair of opposite sides, so  every point of $T$ is a {bisection point} of $T$. If $\ell\parallel T$ and $\ell\ne T$, then $\ell$ meets $T$ at the point at infinity on $\ell$ and  meets every $L_i$. Hence the point at infinity on $\ell$ is its unique {bisection point}. If $\ell$ passes through $v$ and is not parallel to $T$, then at least one $L_i$ is distinct from $\ell$. Every pair of opposite sides crossed by $\ell$ meets it at $v$ and $\ell\cap T$. Thus $\ell$ has the midpoint of these two points as its unique {bisection point}.

Conversely, suppose that a bisector $\ell$ is neither parallel to $T$ nor passes through~$v$, and put $p=\ell\cap T$. If $\ell$ is parallel to some $L_i$, choose $L_j\ne L_i$. The pair $\{T,L_i\}$ {determines} an infinite {bisection point}, whereas $\{T,L_j\}$ {determines} a finite {bisection point}, a contradiction. Hence $\ell$ meets every $L_i$. If $m$ is the common {bisection point} of $\ell$, then every $\ell\cap L_i$ is the reflection of $p$ about $m$, so these intersection points coincide. Two distinct lines among the $L_i$ meet only at $v$, and hence $v\in\ell$, again a contradiction.

It remains to prove~(2). The line-pair members of $\mathcal P(Q)$ are the pairs $\{T,M\}$ with $M$ through $v$. If $M$ is not parallel to $T$, the description of degenerations in the proof of Proposition~\ref{pencil} shows that $\{T,M\}$ contributes only itself to the bisector field. These are  the nonparallel members in~(2). The only parallel member of $\mathcal P(Q)$ is $\{T,R\}$, and it contributes precisely the pairs of lines parallel to $T$ having the same midline as $\{T,R\}$. This family includes $\{T,R\}$ itself and the double midline. This proves~(2).
}
\end{proof}

The bisector field of a parallelogram $Q$ also has a particularly simple description, illustrated in Figure~\ref{parallelogram figure}. For each pair of parallel opposite sides, it contains every pair of parallel lines having the same midline, as well as the midline paired with itself. Its remaining pairs are the pairs of diagonals of the parallelograms whose pairs of opposite sides arise in this way. Consequently, the bisectors are  the lines through the centroid together with the lines parallel to the sides of the parallelogram. See also \cite[Example~6.3]{OWQuad}.

Describing the bisectors of quadrangles gets more complicated for quadrangles that are not parallelograms and have no triple side. For example, over the field of real numbers, the bisectors can appear as lines tangent to a parabola or to a rational cuspidal curve, as in Figure~\ref{deltoid}; see \cite{OWProj}.
 \begin{figure}[ht] 
     \begin{center}
\includegraphics[width=\textwidth]{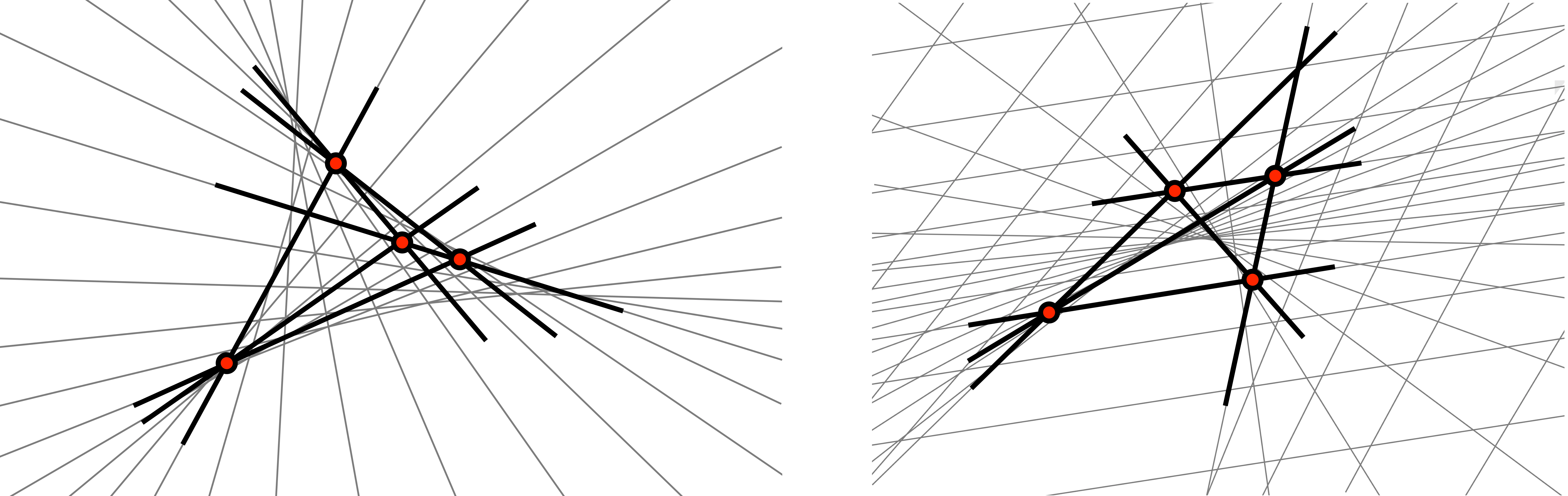}
 \end{center}
\caption{Bisectors of two real quadrangles. On the left, the bisectors envelop the Steiner deltoid. On the right, the bisector field of a {nonparallelogram trapezoid} consists of tangents to a parabola together with a pencil of parallel lines.}
 \label{deltoid} \label{trapezoid}
\end{figure}

  \begin{theorem} \label{weird} The following are equivalent for a quadrangle $Q$.
\begin{itemize}
\item[$(1)$] $Q$ has no triple side.
\item[$(2)$] Each bisector of $Q$ has a unique finite {bisection point}.
\item[$(3)$] {There is a complete quadrilateral $Q_0$ having the same bisectors and bisector field as $Q$, with the same bisection points for every common bisector.}
\end{itemize}
 
\end{theorem}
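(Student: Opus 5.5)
I will prove the cycle $(1)\Rightarrow(3)\Rightarrow(2)\Rightarrow(1)$, with the construction of $Q_0$ as the main step.

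\emph{Proof of $(2)\Rightarrow(1)$, by contraposition.} Suppose $Q$ has a triple side $T$. Lemma~\ref{triangle bf}(1) shows that every point of $T$ is a bisection point of $T$. So $T$ is a bisector without a unique finite bisection point, and (2) fails.

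\emph{Proof of $(1)\Rightarrow(3)$.} Assume $Q$ has no triple side. By Proposition~\ref{unpack}, $Q$ is nondegenerate, a double-side quadrangle, or a quadruple-side quadrangle. I will pick two pairs of opposite sides $\AA=\{A,A'\}$ and $\BB=\{B,B'\}$ whose four lines are distinct, and let $Q_0$ be the complete quadrilateral with these as its opposite pairs.
\begin{itemize}
\item In the nondegenerate case, any two pairs work.
\item In case (a), take $\AA=\CC$ with distinct lines and $\BB$ with no line of $\AA$. If the two lines of $\BB$ were equal, we would be in case (c), so they are distinct.
\item In case (c), the two distinct pairs are $\{T,T\}$ and $\{L_{12},L_{34}\}$, which have only three distinct lines. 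This case needs separate handling: in Case 1 of the proof of Proposition~\ref{unpack}, $L_{12}$ and $L_{34}$ pass through the distinct points $p$ and $q$. I will first try to realize the same pencil space by a quadrilateral with a double vertex, for instance by replacing $\{T,T\}$ using a suitable member of $\mathcal V(Q)$. Failing that, I will verify the bisector data directly from Corollary~\ref{conic butterfly property}.
\end{itemize}
Next I check that $Q_0$ is a genuine complete quadrilateral. Adjacent sides meet because every line of $\AA$ shares a vertex with every line of $\BB$. The four lines are not all concurrent because three sides of $Q$ form a nondegenerate triangle.

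The vertices of $Q_0$ are the points $A\cap B$ and so on, and these are exactly the vertices of $Q$. Hence the quadrangle completed from $Q_0$ is $Q$ itself, or, when a vertex is repeated, has the same opposite-side pairs up to repetition. By the remark preceding Lemma~\ref{quadrilateral bisector pairing}, $Q_0$ and $Q$ then have the same bisectors with the same bisection points. Both bisector fields are the line pairs of the net spanned by $1,F_\AA,F_\BB$, by Proposition~\ref{pencil independent} and the description via \cite{OWPencil}, so they coincide.

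\emph{Proof of $(3)\Rightarrow(2)$.} Take a bisector $\ell$ of $Q$; it is also a bisector of $Q_0$, with the same bisection points. I need a unique finite bisection point for bisectors of a complete quadrilateral. I expect this to come from \cite{OWQuad}, via Lemma~\ref{quadrilateral bisector pairing} and the results cited there. If that route fails, I will argue directly. The four lines of $Q_0$ are distinct, so $\ell$ crosses at least one of its two pairs. Then $\ell$ has a unique bisection point, and I must rule out a point at infinity. An infinite bisection point for one pair forces $\ell$ to be parallel to a line of that pair. A finite bisection point for the other pair then conflicts, unless $\ell$ is parallel to lines of both pairs, which is impossible since adjacent sides meet.

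This last step is the one I expect to be the main obstacle, together with the degenerate case (c) in the construction. I would lean on \cite{OWQuad} for the finiteness of bisection points of complete quadrilaterals, for example that a line parallel to one side meets the opposite pair so as to give a finite bisection point.
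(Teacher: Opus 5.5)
Your arguments for $(2)\Rightarrow(1)$ and $(3)\Rightarrow(2)$ are fine (your direct finiteness argument for complete quadrilaterals works). So is your construction of $Q_0$ in the nondegenerate and double-side cases.

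\textbf{The gap.} The genuine gap is the quadruple-side case~(c) of $(1)\Rightarrow(3)$. Because you reach (2) only through (3), it also leaves $(1)\Rightarrow(2)$ unproved for such $Q$.

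Your first suggestion cannot succeed. Write $v_1=v_2=p$, $v_3=v_4=q$ and $T=pq$. The only line-pair members of $\mathcal P(Q)$ are $\{T,T\}$ and $\{L_{12},L_{34}\}$. Indeed, suppose $rF_T^2+sF_{12}F_{34}=\lambda_1\lambda_2$ with $s\ne0$. Restricting to $T$ shows that one factor vanishes at $p$ and the other at $q$. The tangency conditions of Theorem~\ref{big pencil} then force the factors to define $L_{12}$ and $L_{34}$. As you noticed, these two pairs involve only three distinct lines. So no complete quadrilateral, with or without a double vertex, has both of its pairs of opposite sides in the pencil of $Q$. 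Your fallback of checking bisector data via Corollary~\ref{conic butterfly property} tells you something about $Q$. It does not produce the quadrilateral whose existence (3) asserts.

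\textbf{The missing idea.} Only the net needs to be preserved, not the pencil. Take $\BB=\{B,B'\}$ to be two distinct lines parallel to $T$ having $T$ as midline, so that $F_\BB=F_T^2-c$ with $c\in\k^\times$.
\begin{itemize}
\item Together with $\AA=\{L_{12},L_{34}\}$ these are four distinct, nonconcurrent lines in which adjacent sides meet, so they form a complete quadrilateral $Q_0$.
\item $\operatorname{span}\{F_\AA,F_\BB,1\}=\mathcal V(Q)\oplus\k\mathbin{\cdot}1$. This gives equality of bisector fields via the results of \cite{OWQuad,OWPencil} you already invoke.
\item Since $D_w1=0$, Proposition~\ref{reflection} shows that a line bisects $Q$ at a finite point $m$ exactly when it bisects $Q_0$ at $m$.
\end{itemize}

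\textbf{What is still needed.} The quadrangle obtained by adjoining the diagonals of this $Q_0$ has $B$ and $B'$ among its sides, so it is not $Q$. Hence the remark before Lemma~\ref{quadrilateral bisector pairing} is not available to you here. To upgrade agreement at finite points to ``same bisectors with the same bisection points'', you must also know that every bisector of $Q$, and every bisector of $Q_0$, has a unique finite bisection point.
\begin{itemize}
\item For $Q_0$, your direct argument supplies this.
\item For $Q$, you need the analogous argument. Absent a triple side, some pair of opposite sides has neither line parallel to $\ell$, since otherwise three pairwise adjacent sides parallel to $\ell$ would coincide.
\end{itemize}
This is why the paper proves $(1)\Rightarrow(2)$ directly before constructing $Q_0$, rather than deriving it from (3).
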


\begin{proof}
{
$(1)\Rightarrow(2)$: Let $\ell$ be a bisector of a quadrangle $Q$ with no triple side. {We claim that some pair of opposite sides of $Q$ consists of two sides neither of which is parallel to $\ell$.} Otherwise, from each of the three pairs of opposite sides choose a side parallel to $\ell$, possibly equal to $\ell$. The three chosen sides lie in distinct pairs of opposite sides and hence are pairwise adjacent. Since adjacent sides meet and all three are parallel to $\ell$, they coincide. The quadrangle $Q$ would then have a triple side, a contradiction. Therefore, $Q$ has a pair $\{L,L'\}$ of opposite sides such that neither $L$ nor $L'$ {is parallel to $\ell$.}  Both intersections $\ell\cap L$ and $\ell\cap L'$ are finite, so $\ell$ crosses this pair. Since $\ell$ bisects $Q$, the midpoint of these two points is its unique {bisection point}. Thus~$(2)$ holds.

$(2)\Rightarrow(1)$: If $Q$ is a triple-side quadrangle with triple side $T$, Lemma~\ref{triangle bf} shows that a line parallel to $T$ and distinct from $T$ has an infinite {bisection point}, while every point of $T$ is a {bisection point} of $T$. Thus if~(1) fails, so does $(2)$.

 $(1)\Rightarrow(3)$: Suppose that $Q$ has no triple side. We construct a quadrilateral $Q_0$ and its two pairs of opposite sides $\AA$ and $\BB$ in each of the three possible cases.
If $Q$ is nondegenerate, let $Q_0$ be a quadrilateral whose completed quadrangle is $Q$, and let $\AA$ and $\BB$ be its two pairs of opposite sides.

Suppose instead that $Q$ is degenerate. By Proposition~\ref{unpack}, $Q$ satisfies case~(a) or case~(c) of that proposition. In case~(a), two of the pairs of opposite sides of $Q$ are the same pair $\AA$, while the third is a distinct pair $\BB$. The two lines in $\AA$ and the two lines in $\BB$ are distinct. Every line in $\AA$ is adjacent to every line in $\BB$, so each such pair meets. Three of the four lines pass through the double vertex, while the fourth does not, so the four lines are not concurrent. Thus $\AA$ and $\BB$ are the pairs of opposite sides of a quadrilateral $Q_0$.
Its diagonal pair is $\AA$, {so $Q$ is obtained by adjoining the diagonals of $Q_0$}.

In case~(c), let $\ell$ be the quadruple side and let $\AA$ be the remaining pair of opposite sides. Choose a pair $\BB=\{B,B'\}$ of distinct parallel lines parallel to $\ell$ and having $\ell$ as their midline. The two lines in $\AA$ are distinct and meet $\ell$, whereas the lines in $\BB$ are distinct and parallel to $\ell$. Hence these four lines are distinct, every line in $\AA$ meets every line in $\BB$, and the four lines are not concurrent. Thus $\AA$ and $\BB$ are the pairs of opposite sides of a quadrilateral $Q_0$. If $F_\ell=0$ defines $\ell$, then a line-pair polynomial for $\BB$ has the form $F_\ell^2-c$ for some $c\in\k^\times$.

  In the nondegenerate, double-side, and quadruple-side cases, line-pair polynomials $F_{\AA}$ and $F_{\BB}$ for the pairs of opposite sides of $Q_0$ satisfy
 \begin{equation}\label{auxiliary net span}
\mathcal V(Q)\oplus\k\mathbin{\cdot}1
=\operatorname{span}\{F_{\AA},F_{\BB},1\}.
\end{equation}
Outside case~(c), this follows from Proposition~\ref{pencil independent}. In case~(c), it follows since $F_{\BB}=F_\ell^2-c$ and
$$\operatorname{span}\{F_{\AA},F_\ell^2-c,1\}=\operatorname{span}\{F_{\AA},F_\ell^2,1\}.$$
By \cite[Theorem~6.4]{OWQuad} and \cite[Theorem~6.3]{OWPencil}, the bisector field of $Q_0$ is therefore $\mathcal B(Q)$. Let $m$ be a  point on a line $L$, and let $w$ be a direction vector for $L$. Proposition~\ref{reflection} shows that $L$ bisects $Q$ at $m$ exactly when {the functional $H\mapsto D_wH(m)$} vanishes on $\mathcal V(Q)$, while it bisects $Q_0$ there exactly when $D_wF_{\AA}(m)=D_wF_{\BB}(m)=0$. Since $D_w(1)=0$,  equality~$(\ref{auxiliary net span})$ makes these conditions equivalent. Every bisector of $Q_0$ has a unique finite {bisection point} by \cite[Proposition~3.2(1)]{OWQuad}, and the implication $(1)\Rightarrow(2)$ {establishes} the same conclusion for $Q$. Hence $Q$ and $Q_0$ have the same bisectors, with the same {bisection points}, {as required in~$(3)$}.

\smallskip

{$(3)\Rightarrow(2)$: By \cite[Proposition~3.2(1)]{OWQuad}, every bisector of $Q_0$ has a unique finite bisection point. Since $Q$ and $Q_0$ have the same bisectors and bisection points, the same holds for $Q$.}
}
\end{proof}

\begin{corollary}\label{component bisectors}
The lines in the pairs in $\mathcal B(Q)$ are   the bisectors of $Q$.
\end{corollary}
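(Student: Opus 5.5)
The plan is to split according to whether $Q$ has a triple side, which is exactly the dichotomy of Theorem~\ref{weird}. In each case we show two inclusions: every component line of a pair in $\mathcal B(Q)$ is a bisector of $Q$, and every bisector of $Q$ occurs as a component of some pair in $\mathcal B(Q)$.

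Suppose first that $Q$ has no triple side. Theorem~\ref{weird}(3) provides a complete quadrilateral $Q_0$ with the same bisectors as $Q$ and the same bisector field, namely $\mathcal B(Q)$. Lemma~\ref{quadrilateral bisector pairing} says the lines occurring in the bisector field of $Q_0$ are exactly the bisectors of $Q_0$. Transporting this through the equalities from Theorem~\ref{weird}(3), the lines occurring in $\mathcal B(Q)$ are exactly the bisectors of $Q$. Nothing further is needed in this case.

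Now suppose $Q$ is a triple-side quadrangle, with triple side $T$ and off-side vertex $v$. Here we compare the two explicit descriptions in Lemma~\ref{triangle bf}.
\begin{enumerate}
\item[(i)] By part~(1) of that lemma, the bisectors are the lines through $v$ together with the lines parallel to $T$.
\item[(ii)] By part~(2), the component lines of pairs in $\mathcal B(Q)$ fall into two groups. The first is $T$ and every line $M$ through $v$ not parallel to $T$, coming from the nonparallel pairs $\{T,M\}$. The second is every line parallel to $T$ that lies in a parallel pair with the same midline as $\{T,R\}$.
\end{enumerate}
Every line in (ii) is a bisector by (i), since each is either through $v$ or parallel to $T$. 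For the converse, a line through $v$ is either nonparallel to $T$, and so occurs in some $\{T,M\}$, or it equals $R$, which occurs in $\{T,R\}$. A line $\ell$ parallel to $T$ is the reflection of some line $\ell'$ parallel to $T$ across the common midline of $\{T,R\}$. The pair $\{\ell,\ell'\}$ then has that same midline, so it lies in $\mathcal B(Q)$.

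The one step needing care is this last assertion: every line parallel to $T$ occurs in some parallel pair with the prescribed midline. This uses that reflection across the midline is available (characteristic not $2$), and that the midline itself appears as a double line. Everything else is bookkeeping from the cited results.
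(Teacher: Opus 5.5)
Your proposal is correct and follows the paper's proof. The no-triple-side case goes through Theorem~\ref{weird}(3) and Lemma~\ref{quadrilateral bisector pairing}, and the triple-side case compares parts~(1) and~(2) of Lemma~\ref{triangle bf}. Your reflection argument across the midline of $\{T,R\}$ just spells out what the paper asserts in one line: every line through $v$ and every line parallel to $T$ occurs as a component of some pair in $\mathcal B(Q)$.
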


\begin{proof}
If $Q$ has no triple side, then {Theorem~\ref{weird} shows there is a complete quadrilateral $Q_0$ having the same bisectors and bisector field as $Q$.}  {The assertion for $Q$ follows from Lemma~\ref{quadrilateral bisector pairing}}. If instead $Q$ has a triple side, Lemma~\ref{triangle bf}(2) shows that the component lines of $\mathcal B(Q)$ are exactly the lines through the vertex off the triple side and the lines parallel to the triple side. By Lemma~\ref{triangle bf}(1), these are  the bisectors of $Q$.
\end{proof}
 
{{In the next section, we identify directly which bisectors occur together in {the} line pairs {of $\mathcal B(Q)$}.} Also, while Theorem~\ref{weird} suggests that it would simplify matters to exclude the exceptional triple-side case from the definition of a quadrangle, it is an important boundary case when working with moduli spaces of quadrangles, as in \cite{OWManifold} and related work.}

\section{Bisector pairs of quadrangles}\label{bisector pairs section}

 In this section, we 
 formalize the idea of a bisector pair  of a quadrangle $Q$, and show that the bisector field $\mathcal B(Q)$ of $Q$ is the set of its bisector pairs, thus giving a simpler reinterpretation of the bisector field in terms of line pairs bisected by the sides of $Q$.

\begin{definition} \label{bp def} 
A line pair is a {\it bisector pair} of a quadrangle $Q$ if the lines in the pair are bisectors of $Q$ and every side of $Q$ bisects the pair at its side midpoint.
\end{definition}
  \begin{figure}[ht] 
     \begin{center}
\includegraphics[width=.75\textwidth]{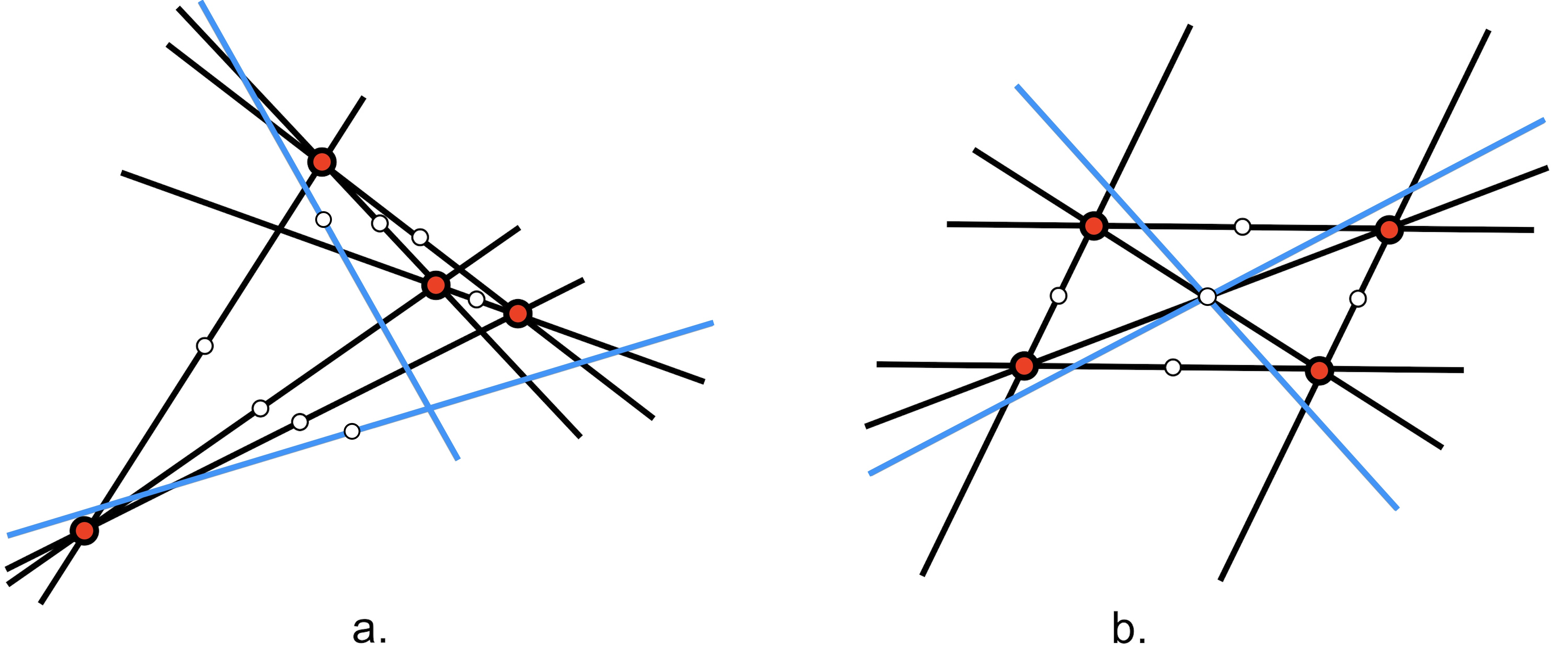}
 \end{center}
\caption{Bisector pairs of two quadrangles. Bisection points are white, and the bisector pairs blue. Every side bisects the blue pair at its side midpoint. By Theorem~\ref{tricky}, this implies that the blue lines must in turn bisect the quadrangle.}
 \label{bisector pair figure}
\end{figure}
Figure~\ref{bisector pair figure} illustrates the definition. The pairs of opposite sides of a quadrangle are trivially bisector pairs. By definition, a bisector pair must be bisected by every side of $Q$ at its side midpoint. Thus, if two sides $L_{ij}$ and $L_{i'j'}$ of a quadrangle are the same line, that line must bisect the pair at the midpoint of $v_i$ and $v_j$ and also at the midpoint of $v_{i'}$ and $v_{j'}$. For example, if a line occurs three times as a triple side, then it must bisect the line pair at each of the three corresponding side midpoints.
 
The next theorem {describes} a bisector pair {in three equivalent ways}. Its main import is that bisection of a pair of lines by the sides of $Q$ at their side midpoints forces the   lines in the pair to be bisectors of $Q$. In short, bisected pairs are bisector pairs.

\begin{theorem}\label{line-pair net members}\label{degs}\label{tricky}
The following are equivalent for a quadrangle $Q$ and a line pair~${\mathsf L}$.
\begin{enumerate}
\item[$(1)$] ${\mathsf L}$ is a bisector pair of $Q$.
\item[$(2)$] ${\mathsf L}$ is bisected by every side of $Q$  at its side midpoint.
\item[$(3)$] ${\mathsf L}\in\mathcal B(Q)$; equivalently, ${\mathsf L}$ is a line-pair member of $\mathcal N(Q)$.
\end{enumerate}
\end{theorem}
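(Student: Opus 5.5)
The plan is to prove the cycle $(1)\Rightarrow(2)\Rightarrow(3)\Rightarrow(1)$.

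The implication $(1)\Rightarrow(2)$ is immediate from Definition~\ref{bp def}, since a bisector pair is by definition bisected by every side of $Q$ at its side midpoint.

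For $(2)\Rightarrow(3)$, regard ${\mathsf L}$ as the conic $[F_{\mathsf L}]$ defined by a line-pair polynomial. Hypothesis~(2) is exactly condition~(2) of Theorem~\ref{conic net criterion} for this conic. That theorem therefore gives $[F_{\mathsf L}]\in\mathcal N(Q)$. Since $F_{\mathsf L}$ is a product of two nonconstant linear polynomials over $\k$, the definition of $\mathcal B(Q)$ then gives ${\mathsf L}\in\mathcal B(Q)$. One point needs checking: the paper's convention for a line pair bisected by a side allows the side to coincide with a component line, or to be parallel to both lines. In those cases the side does not cross the conic, and Proposition~\ref{reflection} counts every point of the side, including its midpoint, as a bisection point. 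So the notion of bisection in~(2) agrees with the conic-bisection notion used in Theorem~\ref{conic net criterion}, and no extra case analysis is needed.

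For $(3)\Rightarrow(1)$, assume ${\mathsf L}\in\mathcal B(Q)$. The component lines of ${\mathsf L}$ are bisectors of $Q$ by Corollary~\ref{component bisectors}. Since $[F_{\mathsf L}]\in\mathcal N(Q)$, the implication $(3)\Rightarrow(2)$ of Theorem~\ref{conic net criterion} shows that every side of $Q$ bisects $[F_{\mathsf L}]$ at its side midpoint. These two facts together are exactly the definition of a bisector pair.

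The only substantive step is the claim that the component lines are bisectors of $Q$. This is where the triple-side case and the bridge to quadrilaterals enter, and it has already been handled by Corollary~\ref{component bisectors}, through Theorem~\ref{weird} and Lemma~\ref{triangle bf}. So the main thing I would verify carefully is the compatibility of the bisection conventions in the step $(2)\Rightarrow(3)$. I expect that check to go through cleanly by Proposition~\ref{reflection}, because both conventions reduce to the same condition $D_wF_{\mathsf L}(m_{ij})=0$.
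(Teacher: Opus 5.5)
Your proposal is correct and is essentially the paper's own proof: (1)$\Rightarrow$(2) holds by definition, (2)$\Leftrightarrow$(3) follows from Theorem~\ref{conic net criterion} applied to the conic $[F_{\mathsf L}]$, and (3)$\Rightarrow$(1) combines that theorem with Corollary~\ref{component bisectors}. Your extra check that the line-pair bisection convention matches conic bisection through Proposition~\ref{reflection} is sound and does not change the argument.
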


\begin{proof}
Let $F$ be a line-pair polynomial defining ${\mathsf L}$. Condition~(1) implies~(2) by definition, and Theorem~\ref{conic net criterion} shows~(2) holds if and only if   $[F]\in\mathcal N(Q)$, which is equivalent to ${\mathsf L}\in\mathcal B(Q)$, so it is condition~(3). To see that (3) implies (1), suppose that ${\mathsf L}\in\mathcal B(Q)$. Then ${\mathsf L}$ is a line-pair member of $\mathcal N(Q)$, so Theorem~\ref{conic net criterion} {implies}~(2). Corollary~\ref{component bisectors} also shows that both   lines in ${\mathsf L}$ are bisectors of $Q$. Thus ${\mathsf L}$ is a bisector pair of $Q$.
\end{proof}

{The quick  proof of Theorem~\ref{tricky} hides the difficulty of the implication $(2)\Rightarrow(1)$. The hypothesis of (2) says that every side of $Q$ bisects ${\mathsf L}$ at its side midpoint, whereas the conclusion in (1) says that the   lines in ${\mathsf L}$ bisect $Q$. This symmetry ultimately rests on the description in Section~5 of the line pairs in the net as a bisector field and on Corollary~\ref{component bisectors}. These facts use substantial results from \cite{OWQuad,OWPencil}, and we do not know a simple direct proof.}
 In any case,
Theorem~\ref{tricky} allows us to  reinterpret the bisector field of a quadrangle without reference to its pencil or net of conics. 


\begin{corollary} \label{bisector field} \label{reflexive} For a quadrangle $Q$,   $\mathcal B(Q)$ is the family of bisector pairs of~$Q$. In particular, every bisector of $Q$ bisects every bisector pair of $Q$.   
\end{corollary}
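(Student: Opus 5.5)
The first assertion follows directly from the equivalence $(1)\Leftrightarrow(3)$ of Theorem~\ref{tricky}. A line pair $\mathsf L$ lies in $\mathcal B(Q)$ exactly when it is a bisector pair of $Q$. Hence $\mathcal B(Q)$, which by definition is the set of line-pair members of $\mathcal N(Q)$, coincides with the family of bisector pairs. Nothing further is needed for this part beyond quoting the theorem.

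For the ``in particular'' statement, let $\ell$ be a bisector of $Q$, say with bisection point $m$, and let $\mathsf L$ be a bisector pair of $Q$.

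First, by the definition of a bisector of $Q$, the line $\ell$ bisects every pair of opposite sides of $Q$ at $m$. Corollary~\ref{conic butterfly property}, implication $(2)\Rightarrow(3)$, then shows that $\ell$ bisects every conic in $\mathcal N(Q)$ at $m$.

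Second, by the first assertion, $\mathsf L\in\mathcal B(Q)\subseteq\mathcal N(Q)$. So $\ell$ bisects $[F_{\mathsf L}]$, hence the pair $\mathsf L$, at $m$.

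Alternatively, Proposition~\ref{pencil} and Corollary~\ref{component bisectors} give the same conclusion. The bisectors of $Q$ are the component lines of the bisector field $\mathcal B(Q)$, and the definition of a bisector field says each component line bisects the whole family. I would use the Corollary~\ref{conic butterfly property} route, since it is self-contained.

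One point needs a remark: in the degenerate case where $\ell$ crosses no pair of opposite sides, every point of $\ell$ is a bisection point. Proposition~\ref{reflection} still applies pointwise, since $D_wF(m)=0$ on $\mathcal V(Q)$ for each such $m$, so the argument above goes through for each choice of $m$.

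I expect no real obstacle. The only care needed is in matching the notion ``bisects $\mathcal F$'' for families with the conic-by-conic criterion, and Proposition~\ref{reflection} handles that.
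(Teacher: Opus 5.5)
You handle the first assertion exactly as the paper does, through Theorem~\ref{tricky}. For the ``in particular'' clause, the paper goes through the bisector field. Corollary~\ref{component bisectors} makes every bisector a component line of some pair in $\mathcal B(Q)$, and Proposition~\ref{pencil} with the definition of a bisector field says each such line bisects the whole family. Your preferred route feeds the bisector straight into Corollary~\ref{conic butterfly property}. That is more elementary: it needs only the easy inclusion of bisector pairs in $\mathcal N(Q)$, which comes from Theorem~\ref{conic net criterion}, and none of the results imported from \cite{OWQuad,OWPencil}. As written, however, it misses a case.

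Corollary~\ref{conic butterfly property} and Proposition~\ref{reflection} concern an affine point $m\in\ell$, since the test is $D_wF(m)=0$. A bisector of $Q$ need not have a finite bisection point. If $Q$ has a triple side $T$, Lemma~\ref{triangle bf}(1) shows that every line $\ell\parallel T$ with $\ell\ne T$ is a bisector whose unique bisection point is the point at infinity on $\ell$. For such $\ell$ there is no $m\in\ell$ to which Corollary~\ref{conic butterfly property} applies. Your closing remark covers a different degeneracy: a line crossing no pair of opposite sides, which here is only $T$ itself. By Theorem~\ref{weird} this is the only missing case.

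The gap is easy to close in either of two ways.
\begin{enumerate}
\item Use the alternative you mention, which is exactly the paper's argument; it handles infinite bisection points through the definition of a bisector field.
\item Run your functional argument at infinity. In the proof of Proposition~\ref{reflection}, the $t^2$-coefficient of $H(m+tw)$ is $H_2(w)$, where $H_2$ is the quadratic part of $H$. Moreover, $H_2(w)=0$ holds exactly when $\ell$ either bisects $[H]$ at infinity or does not cross it. The functional $H\mapsto H_2(w)$ is linear and kills constants. It vanishes on the line-pair polynomials of two distinct pairs of opposite sides, hence on $\mathcal V(Q)\oplus\k\mathbin{\cdot}1$. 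So every member of $\mathcal N(Q)$ that $\ell$ crosses, in particular every bisector pair it crosses, is bisected at the point at infinity on $\ell$. One can also read this off directly from Lemma~\ref{triangle bf}(2).
\end{enumerate}
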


\begin{proof}
By Definition~\ref{bf def}, $\mathcal B(Q)$ consists  of the line pairs in ${\mathcal N}(Q)$, and Theorem~\ref{tricky} shows that the same is true of the family of bisector pairs of $Q$. By Corollary~\ref{component bisectors}, every bisector of $Q$ is a line of one of these pairs, so the last assertion now follows from the definition of a bisector field.
\end{proof}

By definition, two quadrangles are entangled if they have the same net. In fact, it is enough that the quadrangles have the same bisector field:

\begin{theorem}\label{nice}
For a quadrangle $Q$, the net $\mathcal N(Q)$ is determined by $\mathcal B(Q)$. Consequently, two quadrangles $Q_1$ and $Q_2$ are entangled if and only if $\mathcal B(Q_1)=\mathcal B(Q_2)$.
\end{theorem}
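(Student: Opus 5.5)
The plan is to show that the three-dimensional space $\mathcal U=\mathcal V(Q)\oplus\k\mathbin{\cdot}1$ is the span of the constants together with the line-pair polynomials of the members of $\mathcal B(Q)$. Then $\mathcal N(Q)=\P(\mathcal U)$ can be read off from $\mathcal B(Q)$ alone. The line-pair polynomial of a pair is determined up to a nonzero scalar, so this span depends only on the family $\mathcal B(Q)$ of pairs. Once this is proved, the consequence is immediate. If $\mathcal B(Q_1)=\mathcal B(Q_2)$, then the recovered spans agree, so $\mathcal N(Q_1)=\mathcal N(Q_2)$. Conversely, if the nets agree, their line-pair members agree by Definition~\ref{bf def}.

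Let $W=\operatorname{span}\bigl(\{1\}\cup\{F_{\mathsf L}:\mathsf L\in\mathcal B(Q)\}\bigr)$. Every $\mathsf L\in\mathcal B(Q)$ is a member of $\mathcal N(Q)$, so $F_{\mathsf L}\in\mathcal U$ up to scalar, and hence $W\subseteq\mathcal U$. For the reverse inclusion I use that the three pairs of opposite sides of $Q$ are line-pair members of $\mathcal P(Q)\subseteq\mathcal N(Q)$, so they lie in $\mathcal B(Q)$. By Proposition~\ref{pencil independent}, the line-pair polynomials $F_\AA,F_\BB$ of two distinct pairs of opposite sides are linearly independent of degree $2$. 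Together with $1$ they therefore span a three-dimensional subspace of $W$. Since $\dim\mathcal U=3$, we get $W=\mathcal U$. Theorem~\ref{tricky} could be used to phrase the same recovery as ``the span of $1$ and all bisector pairs,'' but that is not needed.

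The main obstacle is to make sure that ``determined by $\mathcal B(Q)$'' uses only the family of pairs as an abstract set, and not knowledge of which pairs are the opposite sides of $Q$. The argument above handles this, because it spans over all of $\mathcal B(Q)$ rather than over distinguished members. The one thing to check is that $W$ cannot exceed $\mathcal U$, and that holds because every member lies in $\mathcal U$. No degenerate-case analysis is needed, since Proposition~\ref{pencil independent} already covers degenerate quadrangles.
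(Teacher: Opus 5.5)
Your proposal is correct and follows essentially the same route as the paper. Both arguments show that $\mathcal V(Q)\oplus\k\mathbin{\cdot}1$ is spanned by $1$ together with the line-pair polynomials of the members of $\mathcal B(Q)$, since two distinct pairs of opposite sides lie in $\mathcal B(Q)$ and together with $1$ give a basis; the converse then follows directly from the definition of $\mathcal B(Q)$.
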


\begin{proof} Since entangled quadrangles $Q_1$ and $Q_2$ have the
same net, $\mathcal B(Q_1)=\mathcal B(Q_2)$. Conversely, suppose $\mathcal B(Q_1)=\mathcal B(Q_2)$.  For a quadrangle $Q$, choose two distinct pairs of opposite sides $\AA$ and $\BB$, and let $F_{\AA}$ and $F_{\BB}$ be line-pair polynomials defining them.  {By Definition~\ref{bf def} and Proposition~\ref{pencil independent},}
$$\mathcal N(Q)=\mathbb P\bigl(\operatorname{span}\{1,F_{\AA},F_{\BB}\}\bigr).$$
 The pairs $\AA$ and $\BB$ belong to $\mathcal B(Q)$, while every member of $\mathcal B(Q)$ belongs to $\mathcal N(Q)$. Hence $\mathcal N(Q)$ is   the projective span of $[1]$ and the points represented by the members of $\mathcal B(Q)$. {By this description,} $\mathcal N(Q_1)=\mathcal N(Q_2)$. Thus $\mathcal B(Q)$ determines $\mathcal N(Q)$, and quadrangles with the same bisector field are entangled.
\end{proof}

{The next theorem shows that, except for triple-side quadrangles, every bisector has a unique partner in a bisector pair.} We also characterize trapezoids and parallelograms by the existence, respectively, of a parallel bisector pair and of two distinct bisectors with a common finite {bisection point}.

\begin{theorem} \label{pairing structure} \label{special pair characterizations}
\label{mid centroid} \label{unique pair} \label{old}
Let $Q$ be a quadrangle.
\begin{enumerate}
\item[$(1)$]  Each bisector of $Q$ belongs to a unique bisector pair of $Q$ if and only if $Q$ has no triple side. In this case,  the centroid of $Q$ is the midpoint of the {bisection points} in every bisector pair.

\item[$(2)$] \label{parallel pair} {The quadrangle $Q$ is a trapezoid if and only if $Q$ has no triple side and $Q$ has a bisector pair whose lines are parallel. In this case, the lines in each parallel bisector pair are parallel to a pair of parallel opposite sides of $Q$, and every line parallel to a pair of parallel opposite sides of $Q$ is a bisector of $Q$.}

\item[$(3)$] \label{parallelogram pairing} The quadrangle $Q$ is a parallelogram if and only if it has two distinct bisectors with a common finite {bisection point}. This common {bisection point} is the centroid of $Q$.
\end{enumerate}
\end{theorem}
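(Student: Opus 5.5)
Throughout I use the functional $\Phi_m^w(F)=D_wF(m)$ on $\mathcal U=\mathcal V(Q)\oplus\k\cdot 1$. By Proposition~\ref{reflection} and Corollary~\ref{conic butterfly property}, a line $\ell$ through $m$ with direction $w$ bisects $Q$ at $m$ exactly when $\Phi_m^w$ vanishes on $\mathcal V(Q)$.

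\emph{Uniqueness of the partner in part (1).} Suppose $Q$ has no triple side, and let $\ell$ be a bisector. Theorem~\ref{weird} gives a complete quadrilateral $Q_0$ with the same bisectors and the same bisector field. Lemma~\ref{quadrilateral bisector pairing} then shows that $\ell$ lies in a unique pair of that field. By Theorem~\ref{tricky}, the field consists exactly of the bisector pairs of $Q$.

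\emph{Failure of uniqueness with a triple side.} Conversely, if $Q$ has a triple side $T$, Lemma~\ref{triangle bf}(2) shows that $T$ lies in $\{T,M\}$ for every line $M$ through $v$ not parallel to $T$. Hence uniqueness fails.

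\emph{The centroid claim in part (1).} Let $\{\ell,\ell'\}$ be a bisector pair with bisection points $m,m'$ and line-pair polynomial $F=f f'$. Write $F=G+c$ with $G\in\mathcal V(Q)$. I want to show the centroid $g$ of $Q$ is the midpoint of $m$ and $m'$. My plan is to compute $\nabla F$ and compare with a centroid identity.

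First, for any quadratic $G$ vanishing at the four vertices, the identity $G(v)=G(g)+\nabla G(g)\cdot(v-g)+q(v-g)$, summed over the vertices, gives $\sum_i q(v_i-g)=-4G(g)$. Instead of using this, I expect it is cleaner to show directly that $\nabla G(g)$ is determined linearly. One check is that every nonparallel line-pair member is centered at the midpoint of its bisection points. Indeed, for $F=ff'$ with $\ell,\ell'$ meeting at $p$, the point $m$ lies on $\ell$ and bisects $\ell'$ along $\ell$, which forces $m$ and $m'$ to be related to $p$.

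I would instead try the following more uniform route. Translate so that $g=0$. Restrict a pencil polynomial $G$ to $\ell$, parametrized by $m+tw$; the linear coefficient $D_wG(m)=0$ is linear in $m$. Then verify that $m\mapsto$ (the reflected point $-m$) satisfies the analogous equation for $\ell'$, using that $\nabla G$ is affine with constant term $\nabla G(0)$.

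The key lemma I need is that $\nabla G(g)$ lies in a controlled position for every $G\in\mathcal V(Q)$. I expect this to be the main obstacle. I would first verify it on opposite-side pairs, since the centroid is the midpoint of the two side midpoints $m_{ij},m_{kl}$, and these are the bisection points of that pair. Then I would extend by linearity over the two-dimensional space $\mathcal V(Q)$ together with the constants, via the transfer to $Q_0$ and \cite{OWQuad}, where the midpoint-of-bisection-points property for quadrilateral bisector pairs is known.

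\emph{Part (2).} For the forward direction, if $\AA$ is a pair of parallel opposite sides and there is no triple side, then $\AA$ itself is a parallel bisector pair. A triple-side trapezoid also needs attention: its triple side $T$ can be parallel to the opposite side through $v$, but the statement requires no triple side, so this case must be excluded. I need to check Definition~\ref{trap def} here; the statement's equivalence suggests that triple-side quadrangles can have parallel pairs $\{T,R\}$, so I must treat this carefully.

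For the converse, a parallel member of $\mathcal N(Q)$ has quadratic part $f_2^2$, a square in the pencil's quadratic span. The quadratic parts of $F_{\AA},F_{\BB}$ span a pencil of binary quadratic forms containing the three opposite-side direction products. Whichever square appears corresponds to a parallel direction, and I expect it forces an opposite-side pair with that common direction. The lines parallel to it are then bisectors by the description of degenerations in the proof of Proposition~\ref{pencil}.

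\emph{Part (3).} If two bisectors share a finite bisection point $m$, then two independent directions $w,w'$ give $D_wG(m)=D_{w'}G(m)=0$, so $\nabla G(m)=0$ for all $G\in\mathcal V(Q)$. Hence all pencil members share the center $m$, and each is symmetric about $m$. The vertex set is therefore preserved by point reflection in $m$, which forces a parallelogram, with $m$ the centroid. The converse follows from the explicit description of parallelogram bisectors: every line through the centroid is a bisector.
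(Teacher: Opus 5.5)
Your uniqueness argument in (1), the triple-side failure of uniqueness, and the last assertion of (2) match the paper. The rest of the proposal has genuine gaps.

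\textbf{The centroid assertion in (1).} This is never proved. Checking it on the three opposite-side pairs and then extending ``by linearity over $\mathcal V(Q)$'' cannot work. The assertion concerns only the line-pair members of $\mathcal N(Q)$, and these do not form a linear subspace: a combination of two line-pair polynomials is typically not a line pair at all. So the opposite-side pairs tell you nothing about the other members of $\mathcal B(Q)$.

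The workable part of your plan is the transfer to $Q_0$. By \cite[Definition~6.2]{OWQuad}, the centroid of $Q_0$ is the midpoint of the bisection points of every pair in its field, and $Q_0$ has the same field and bisection points as $Q$. But this only produces the centroid of $Q_0$, and you never show it equals the centroid of $Q$. That is immediate in the nondegenerate and double-side cases, but not in the quadruple-side case. There $Q_0$ has vertices $A\cap B$, $A\cap B'$, $A'\cap B$, $A'\cap B'$, while $Q$ has the double vertices $A\cap\ell$ and $A'\cap\ell$. The paper supplies the missing step: since $\ell$ is the midline of $\{B,B'\}$, the point $A\cap\ell$ is the midpoint of $A\cap B$ and $A\cap B'$, and likewise for $A'$, so the two centroids agree.

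\textbf{Part (2).} The forward direction requires showing that a trapezoid has no triple side. Your suggestion that the triple side ``can be parallel to the opposite side through $v$'' is false. Every side opposite $T$ joins $v\notin T$ to a vertex on $T$, so it meets $T$ and is not parallel to it. Hence a triple-side quadrangle is never a trapezoid. The parallel pair $\{T,R\}$ is a bisector pair, not a pair of opposite sides, which is exactly why ``no triple side'' must be assumed in the converse.

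The converse itself is left at ``I expect it forces an opposite-side pair.'' A square quadratic part alone proves nothing, since it is also the quadratic part of parabolas in the pencil. One direct repair is the following.
\begin{itemize}
\item[$\circ$] If $(f-a)(f-b)\in\mathcal V(Q)\oplus\k\mathbin{\cdot}1$, subtracting a constant gives $(f-\alpha)(f-\beta)\in\mathcal V(Q)$.
\item[$\circ$] This polynomial vanishes at the vertices, so $\alpha,\beta\in\k$ and every vertex lies on $f=\alpha$ or $f=\beta$.
\item[$\circ$] The tangency condition of Theorem~\ref{big pencil} at repeated vertices, together with the absence of a triple side, then puts $v_i,v_j$ on one line and $v_k,v_l$ on the other, with $L_{ij}$ and $L_{kl}$ equal to these lines. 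When $\alpha=\beta$, it gives the quadruple-side case instead.
\end{itemize}
The paper obtains this instead from \cite[Proposition~3.2(3)]{OWQuad} via $Q_0$, with a separate check in the quadruple-side case.

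\textbf{Part (3).} Your reflection argument is a genuinely more elementary route than the paper's appeal to \cite[Proposition~3.5]{OWQuad}. It is sound for nondegenerate $Q$, where the base locus of $\mathcal P(Q)$ is the vertex set. For degenerate $Q$, however, the step ``the vertex set is preserved by reflection in $m$, which forces a parallelogram'' breaks down. The two double vertices of a quadruple-side quadrangle are swapped by reflection in their midpoint. For a triple-side quadrangle the base locus is $T\cup\{v\}$, not the vertex set.

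These cases must be excluded separately. One way is to apply $\nabla G(m)=0$ to each opposite-side polynomial $G=F_LF_{L'}$. This forces $m=L\cap L'$ when $L$ and $L'$ are not parallel, and puts $m$ on their midline when they are. A short case check then rules out every degenerate quadrangle. In the nondegenerate case it yields two parallel pairs, because the diagonal points are distinct.
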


\begin{proof} (1) Suppose that $Q$ has no triple side, and let $Q_0$ be the {complete quadrilateral} constructed in the proof of Theorem~\ref{weird}. Then $Q$ and $Q_0$ have the same bisectors and bisector field, with the same {bisection points}.  By Corollary~\ref{bisector field} and Lemma~\ref{quadrilateral bisector pairing}, the bisector pairs of $Q$ are the pairs in the bisector field of $Q_0$. The lemma also shows that every bisector of $Q_0$, and hence every bisector of $Q$, belongs to a unique such pair.

 It remains to prove the assertion about the centroid. We first show that $Q$ and $Q_0$ have the same centroid. Except in case~(c) of Proposition~\ref{unpack}, they have the same four vertices, counted with multiplicity, so this is immediate. In case~(c), write $\AA=\{A,A'\}$ for the remaining pair of opposite sides of $Q$. The pair $\BB=\{B,B'\}$ consists of parallel lines having the quadruple side $\ell$ as their midline. Hence $A\cap\ell$ is the midpoint of the two vertices $A\cap B$ and $A\cap B'$ of $Q_0$, and similarly $A'\cap\ell$ is the midpoint of $A'\cap B$ and $A'\cap B'$. Thus the two double vertices $A\cap\ell$ and $A'\cap\ell$ of $Q$ are the midpoints of the opposite sides $A$ and $A'$ of $Q_0$. The centroid of $Q_0$ is the midpoint of these two side midpoints, while the centroid of $Q$ is the midpoint of its two double vertices. Therefore $Q$ and $Q_0$ have the same centroid in this case as well. Finally, by \cite[Definition~6.2]{OWQuad}, the centroid of $Q_0$ is the midpoint of the {bisection points} of every pair in its bisector field. Since the bisector pairs of $Q$ are the pairs in the bisector field of $Q_0$ and have the same {bisection points}, {the midpoint of the two bisection points in each pair is the centroid of $Q$}.

Conversely, suppose that $Q$ is a triple-side quadrangle with triple side $T$. At least two non-triple sides $L_1,L_2$ are distinct, since otherwise $Q$ would not be a quadrangle. The pairs of opposite sides $\{T,L_1\}$ and $\{T,L_2\}$ are distinct bisector pairs containing $T$. Thus $T$ does not belong to a unique bisector pair.

(2) Suppose first that $Q$ is a trapezoid, and let $\{P,P'\}$ be any pair of parallel opposite sides. A triple-side quadrangle is not a trapezoid, since each pair of opposite sides contains the triple side and a non-triple side meeting it, so $Q$ has no triple side. The pair $\{P,P'\}$ belongs to $\mathcal B(Q)$ and hence is a bisector pair by Corollary~\ref{bisector field}. Let $M$ be any line parallel to $P$, and choose $M'$ so that $\{M,M'\}$ has the same midline as $\{P,P'\}$. The description of parallel degenerations in the proof of Proposition~\ref{pencil} shows that $\{M,M'\}\in\mathcal B(Q)$. Corollary~\ref{component bisectors} therefore shows that $M$ is a bisector of $Q$. Since $\{P,P'\}$ was arbitrary, this proves the forward implication and the final assertion.

Conversely, suppose that $Q$ has no triple side and let $\{M,M'\}$ be a parallel bisector pair. Take $Q_0$ to be the {complete quadrilateral} constructed in the proof of Theorem~\ref{weird}. If $M=M'$, the description of parallel degenerations in the proof of Proposition~\ref{pencil} yields a pair of distinct parallel lines in $\mathcal B(Q)$ with the same direction, so we may assume that $M\ne M'$. {The quadrangle $Q$ and quadrilateral $Q_0$ have the same bisectors,} so $M$ and $M'$ are distinct parallel bisectors of $Q_0$. By \cite[Proposition~3.2(3)]{OWQuad}, they are parallel to a pair of sides or diagonals of $Q_0$.

In the nondegenerate and double-side cases, the two pairs of opposite sides and the diagonal pair of $Q_0$ are the three pairs of opposite sides of $Q$, allowing repetitions. Thus $M$ and $M'$ are parallel to a pair of parallel opposite sides of $Q$. In the quadruple-side case, $Q$ is already a trapezoid, and the only parallel pair among the sides and diagonals of $Q_0$ is the pair $\BB=\{B,B'\}$ constructed parallel to the quadruple side $\ell$ since the other side pair $\AA=\{A,A'\}$ is nonparallel because $A$, $A'$, and $\ell$ form a nondegenerate triangle, while parallel diagonals would force the distinct side midpoints $A\cap\ell$ and $A'\cap\ell$ to coincide. Hence $M$ and $M'$ are parallel to $\ell$. This proves the converse and the assertion about every parallel bisector pair.

 (3) If $Q$ is a parallelogram,    $Q$ is nondegenerate by Proposition~\ref{unpack}, so its two   diagonals occur among its six sides and are distinct bisectors whose common {bisection point} is the centroid.
Conversely, suppose that two distinct bisectors $\ell$ and $\ell'$ have a common finite {bisection point} $m$. Lemma~\ref{triangle bf} shows that $Q$ has no triple side, so let $Q_0$ be the {complete quadrilateral} constructed in the proof of Theorem~\ref{weird}. The lines $\ell$ and $\ell'$ are distinct bisectors of $Q_0$ with common {bisection point} $m$. By \cite[Proposition~3.5]{OWQuad}, the vertices of $Q_0$ are the vertices of a parallelogram and $m$ is its centroid. Among the sides and diagonals of $Q_0$ are two parallel pairs in distinct directions, and these pairs belong to its bisector field by \cite[Theorem~6.4]{OWQuad}. Thus they are parallel bisector pairs of $Q$. By (2), $Q$ has pairs of parallel opposite sides in these two directions, so $Q$ is a parallelogram. Finally, by (1), $Q$ and $Q_0$ have the same centroid, and hence that $m$ is the centroid of $Q$. This proves~(3).
\end{proof}

\section{\texorpdfstring{{The locus of a quadrangle}}{The locus of a quadrangle}}\label{loci section}

{In this section, we associate a  conic to a quadrangle whose points can be described two ways. They are the centers of the central conics in the pencil of the quadrangle and the finite bisection points of the bisectors of the quadrangle.   For an application of this conic to moduli spaces of quadrangles, see \cite{OWManifold}, where an invariant associated to this conic is used as an organizing parameter for the moduli space.} One of our main purposes in this section is to determine to what extent the locus determines the entanglement class of a quadrangle, which we do in Theorem~\ref{locus determines}.

\begin{proposition}\label{center locus definition} \label{center locus equation definition}
\label{bisector locus definition}\label{center locus properties}
{Let $F,G\in\k[X,Y]$ be quadratic polynomials whose homogeneous degree-two parts are linearly independent, and set
$\mathcal P=\mathbb P(\operatorname{span}\{F,G\}).$
Then
$$J(F,G)=(\partial_XF)(\partial_YG)-(\partial_YF)(\partial_XG)$$
is a nonzero quadratic polynomial, the conic $[J(F,G)]$ is independent of the chosen basis of the pencil $\mathcal P$, and its affine $\k$-points are  the centers of the central conics in $\mathcal P$.}
\end{proposition}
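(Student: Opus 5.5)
The plan is to handle the three assertions in turn: basis independence by bilinearity, nonvanishing by looking at the top-degree part, and the description of the $\k$-points by solving a $2\times2$ linear system.

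\emph{Basis independence.} The map $(F,G)\mapsto J(F,G)$ is bilinear and alternating, since $J(G,F)=-J(F,G)$ and $J(F,F)=0$. If $F'=aF+bG$ and $G'=cF+dG$, then expanding gives
$$J(F',G')=(ad-bc)\,J(F,G).$$
Any other basis of $\operatorname{span}\{F,G\}$ has $ad-bc\ne0$, so it defines the same class $[J(F,G)]$, provided $J(F,G)\ne0$.

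\emph{Nonvanishing and degree.} $J(F,G)$ has degree at most~$2$, since each partial derivative has degree at most~$1$. Its degree-two part is $J(F_2,G_2)$, where $F_2,G_2$ are the quadratic parts. The quadratic forms correspond to symmetric matrices $A,B$ with $\nabla F_2=2Ax$ and $\nabla G_2=2Bx$. Thus $J(F_2,G_2)=4\det[Ax\mid Bx]$, a quadratic form determined by the pair $(A,B)$ up to the alternating property.

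I would check by direct coordinates that $J(F_2,G_2)=0$ forces $F_2,G_2$ to be dependent. Write $F_2=a_1X^2+b_1XY+c_1Y^2$ and similarly for $G_2$. The $X^2$, $XY$, $Y^2$ coefficients of $J$ are, up to units,
$$2(a_1b_2-a_2b_1),\qquad 4(a_1c_2-a_2c_1),\qquad 2(b_1c_2-b_2c_1),$$
using $\operatorname{char}\k\ne2$. These are exactly the $2\times2$ minors of the coefficient matrix of $F_2,G_2$. So they all vanish iff $F_2,G_2$ are dependent, which is excluded by hypothesis. Hence $J(F,G)$ is a nonzero quadratic polynomial.

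\emph{The $\k$-points.} This is the main step, and it is pure linear algebra. Fix $m\in\k^2$. The point $m$ is a center of some nonconstant member $[rF+sG]$ exactly when there is $(r,s)\ne(0,0)$ with
$$r\nabla F(m)+s\nabla G(m)=0.$$
Note that every nonzero $rF+sG$ is automatically nonconstant, because its quadratic part is nonzero. Such $(r,s)$ exists iff the vectors $\nabla F(m)$ and $\nabla G(m)$ are linearly dependent. That happens iff the $2\times2$ determinant with these columns vanishes, i.e. iff $J(F,G)(m)=0$.

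Therefore $V(J(F,G))$ is exactly the set of points that are centers of central conics in $\mathcal P$. The only subtlety to state carefully is the converse direction, namely that a center of a member of $\mathcal P$ is a zero of $J$. This is immediate from the same dependence argument.
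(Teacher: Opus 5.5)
Your proposal is correct and follows the paper's proof essentially step for step. Both arguments use the same three ingredients: the explicit computation of the quadratic part of $J$ as the $2\times2$ minors of the coefficient matrix (using $\operatorname{char}\k\ne2$), the identity $J(aF+bG,cF+dG)=(ad-bc)J(F,G)$, and the observation that $m$ is a center of a member of $\mathcal P$ exactly when $\nabla F(m)$ and $\nabla G(m)$ are dependent, i.e.\ when $J(F,G)(m)=0$. Your remark that every nonzero member is nonconstant, so that the notion of center applies, is a point the paper leaves implicit.
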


\begin{proof} 
Write $\mathcal P=\mathbb P(\operatorname{span}\{F,G\})$, and write the homogeneous quadratic parts $F_2$ and $G_2$ of $F$ and $G$ as $$F_2=\alpha X^2+\beta XY+\gamma Y^2 \: {\text{ and }} \: G_2=\alpha'X^2+\beta'XY+\gamma'Y^2.$$ A direct calculation shows the homogeneous quadratic part of $J(F,G)$ is
\begin{eqnarray}
\label{jacobian homogeneous part}
J(F,G)_2=2(\alpha\beta'-\beta\alpha')X^2+4(\alpha\gamma'-\gamma\alpha')XY +2(\beta\gamma'-\gamma\beta')Y^2.
\end{eqnarray}
Since $\operatorname{char}(\k)\ne2$, this   vanishes exactly when the coefficient vectors $(\alpha,\beta,\gamma)$ and $(\alpha',\beta',\gamma')$ are scalar multiples of each other, but they are independent by hypothesis, so $J(F,G)$ has degree~$2$.

More generally,   when $H=aF+bG, K=dF+eG,$
and  $ae-bd\ne0$, we have $$J(H,K)=\det(a\nabla F+b\nabla G,d\nabla F+e\nabla G) =(ae-bd)J(F,G).$$
The polynomials $H=aF+bG$ and $K=dF+eG$ range over all   bases of $\operatorname{span}\{F,G\}$. Since $ae-bd\ne0$, the identity above shows that $J(H,K)$ is a nonzero scalar multiple of $J(F,G)$. Thus $[J(H,K)]=[J(F,G)]$, proving that {the conic $[J(F,G)]$} is independent of the chosen basis.  Set $W=\operatorname{span}\{F,G\}$, and for $m\in\k^2$ define
$$E_m:W\longrightarrow\k^2:H\longmapsto\nabla H(m).$$ In the basis $F,G$ of $W$ and the standard basis of $\k^2$, the determinant of $E_m$ is $J(F,G)(m)$. Moreover, $m$ is the center of a member of $\mathcal P$ exactly when $\ker E_m\ne0$. Since $W$ has dimension~$2$, this is equivalent to $\operatorname{rank}E_m<2$, and hence to $J(F,G)(m)=0$.
\end{proof}
 
{The idea of using the Jacobian to describe the centers of conics in a pencil goes back at least to Casey \cite[p.~171]{Casey}. We now show that, for the pencil of a quadrangle, the same conic {has as its points} the finite bisection points of its bisectors.}
{For nondegenerate real quadrangles inscribed in a circle, Sliep{\accent"14 c}evi\'{c} \cite[Theorem~4]{Sliepcevic} identifies the finite bisection points, called butterfly points there, with the centers of the central conics in the pencil.}

\begin{figure}[!htbp]
\centering
\includegraphics[width=\textwidth]{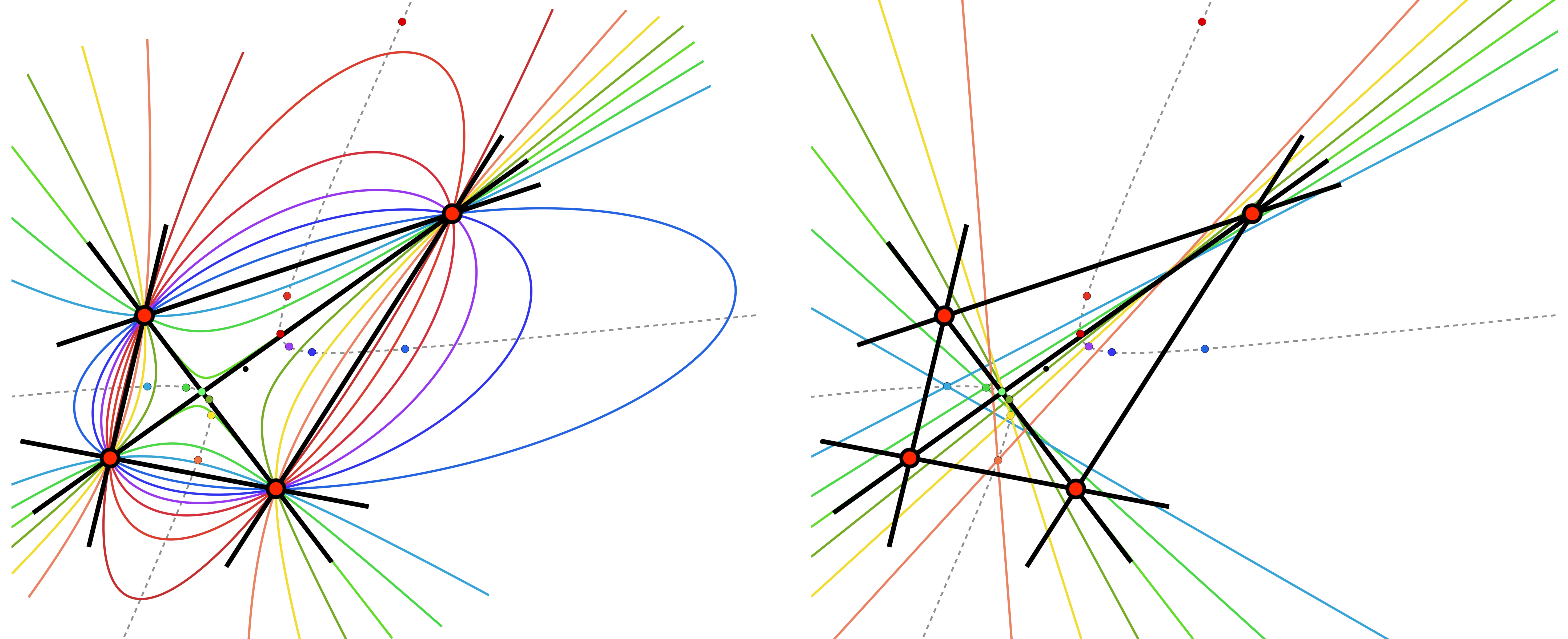}
\caption{Conics in the pencil of a quadrangle (left) and bisector pairs in its net (right), which are the reducible conics in the net. The colored points are  conic centers in both figures. The dashed curve is the common locus of conic centers and finite bisection points of the quadrangle's bisectors, as in Theorem~\ref{bisector locus}.}
\label{conics and bisectors figure}
\end{figure}

\begin{theorem} \label{bisector locus}
{Let $Q$ be a quadrangle, and let $F,G$ be a basis of $\mathcal V(Q)$. Then the finite bisection points of the bisectors of $Q$ are  the points of $V(J(F,G))$, and hence   the centers of the central conics in $\mathcal P(Q)$.}
\end{theorem}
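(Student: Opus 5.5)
The plan is to reduce the statement to linear algebra at a single point $m\in\k^2$, and then to identify the resulting condition with the vanishing of the determinant that already appeared in the proof of Proposition~\ref{center locus definition}. Fix a point $m\in\k^2$. I will show that $m$ is a finite bisection point of some bisector of $Q$ if and only if there is a nonzero vector $w\in\k^2$ with
$$D_wF(m)=D_wG(m)=0.$$

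The first step is to translate bisection of $Q$ at $m$ into a derivative condition. Let $\ell$ be a line through $m$ with direction vector $w$. By definition, $\ell$ bisects $Q$ at $m$ when it bisects each pair of opposite sides at $m$. By Proposition~\ref{reflection}, this holds exactly when $D_wF_{\mathsf S}(m)=0$ for a line-pair polynomial $F_{\mathsf S}$ of each such pair. Here it matters that Proposition~\ref{reflection} also covers the case where $\ell$ does not cross the pair: then every point of $\ell$, and in particular $m$, is a bisection point. The condition is linear in the polynomial, and by Proposition~\ref{pencil independent} any two distinct opposite-side polynomials span $\mathcal V(Q)$, as in the proof of Corollary~\ref{conic butterfly property}. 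So $\ell$ bisects $Q$ at $m$ if and only if the functional $H\mapsto D_wH(m)$ vanishes on $\mathcal V(Q)$. Since $F,G$ is a basis of $\mathcal V(Q)$, this is equivalent to $D_wF(m)=D_wG(m)=0$.

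The second step is the determinant computation. The pair of equations $D_wF(m)=D_wG(m)=0$ says that $w$ lies in the kernel of the $2\times2$ matrix with rows $\nabla F(m)$ and $\nabla G(m)$. A nonzero such $w$ exists if and only if the determinant of this matrix vanishes, and that determinant is exactly $J(F,G)(m)$. Conversely, any nonzero $w$ in the kernel determines the line $\ell=m+\k w$, which then bisects $Q$ at the finite point $m$. Hence the finite bisection points of bisectors of $Q$ are precisely the points of $V(J(F,G))$.

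The final clause follows from Proposition~\ref{center locus definition}. To apply it, I need the homogeneous quadratic parts of $F$ and $G$ to be linearly independent. This was shown in the proof of Proposition~\ref{pencil independent}: every nonzero element of $\mathcal V(Q)$ has degree~$2$, so no nontrivial combination of $F$ and $G$ can have vanishing quadratic part.

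I expect no serious obstacle. The only care needed is in the first step, where the non-crossing case must be included so that the notion of finite bisection point matches condition~$(3)$ of Proposition~\ref{reflection}; that proposition already handles this case.
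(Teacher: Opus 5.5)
Your proposal is correct and follows essentially the same route as the paper. Both reduce bisection of $Q$ at $m$ in direction $w$ to $D_wF(m)=D_wG(m)=0$ using Proposition~\ref{reflection} and the spanning argument of Corollary~\ref{conic butterfly property}, and both identify the existence of such a nonzero $w$ with $J(F,G)(m)=0$; the paper phrases this last step as $\operatorname{rank}E_m<2$ for the evaluation map $E_m$ from the proof of Proposition~\ref{center locus definition}, which is your kernel-of-the-gradient-matrix condition.
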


\begin{proof}
 Let $W=\mathcal V(Q)$, and let $F,G$ be a basis of $W$.  By Proposition~\ref{pencil independent}, every nonzero element of $W$ has degree~$2$, so the homogeneous quadratic parts of $F$ and $G$ are linearly independent. Therefore, $F$ and $G$ satisfy the hypotheses of {Proposition~\ref{center locus properties}}. Define $E_m$ as in the proof of Proposition~\ref{center locus properties}. If $\ell$ is the line through $m$ with nonzero direction vector $w$, then Proposition~\ref{reflection} and Corollary~\ref{conic butterfly property} show that $\ell$ bisects $Q$ at $m$ exactly when $\nabla H(m)\mathbin{\cdot}w=0$ for every $H\in W$.
Such a vector $w$ exists exactly when $\operatorname{im}E_m$ is a proper subspace of $\k^2$, i.e., when $\operatorname{rank}E_m<2$. By Proposition~\ref{center locus properties}, this is the condition that {$m\in V(J(F,G))$, which is also the condition that $m$ be the center of a member of $\mathcal P(Q)$}.
\end{proof}

{The basis-independent conic}
$C_Q=[J(F,G)],$
{where $F,G$ is any basis of $\mathcal V(Q)$, is the \emph{locus conic} of $Q$, and $C_Q(\k)$ is the \emph{locus} of $Q$. By Theorem~\ref{bisector locus}, this locus is   the set of finite bisection points of the bisectors of $Q$.} For a nondegenerate quadrangle $Q$, the \emph{nine-point conic} of $Q$ is the affine conic whose projective realization passes through its six not-necessarily-distinct side midpoints and the three   intersections of its pairs of opposite sides. It is classically known that this conic is $C_Q$; see \cite[Corollary~5.3]{OWQuad} for a proof over fields of characteristic different from~$2$, and \cite{Del,Vac} for the historical development from Steiner through Trudi to Beltrami.  Thus the locus conic and the nine-point conic of a nondegenerate quadrangle are the same conic.

For the next theorem, an \emph{ellipse} is a nondegenerate conic with no $\k$-rational point at infinity, while a \emph{hyperbola} has two distinct $\k$-rational points at infinity and may be degenerate.
 
\begin{theorem}\label{locus}\label{locus equation}
The locus conic $C_Q$ of a quadrangle $Q$ is a degenerate hyperbola if and only if $Q$ is a trapezoid; it is a pair of parallel lines if and only if $Q$ is a triple-side quadrangle; otherwise it is a nondegenerate ellipse or a nondegenerate hyperbola. The centroid of $Q$ is a center of $C_Q$.

\end{theorem}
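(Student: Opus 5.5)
We work in explicit coordinates chosen to suit each case of Proposition~\ref{unpack}. Two kinds of data control the shape of $C_Q$. The points at infinity of $C_Q$ are the zeros of $J(F,G)_2$. The degeneracy of $C_Q$ is read off its $3\times3$ matrix. By \eqref{jacobian homogeneous part}, $J(F,G)_2$ is, up to the factor $2$, the form $(\alpha\beta'-\beta\alpha')X^2+2(\alpha\gamma'-\gamma\alpha')XY+(\beta\gamma'-\gamma\beta')Y^2$. Its roots $[x:y]$ are exactly the directions in which some nonzero member $rF_2+sG_2$ of the pencil of quadratic parts has a double root. These are the directions $w$ for which the $2\times2$ system $(F_2,G_2)$ is degenerate along $w$. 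So the first step is to identify the points at infinity of $C_Q$ with the directions of the parallel line pairs (possibly doubled lines) among the quadratic parts of $\mathcal V(Q)$. Such a parallel member comes from a parallel pair of opposite sides of $Q$, or, by the degeneration description in the proof of Proposition~\ref{pencil}, from a parallel member of $\mathcal B(Q)$. Combining this with Theorem~\ref{special pair characterizations}(2) gives a dichotomy for a quadrangle with no triple side. If $Q$ is a trapezoid, $C_Q$ has a rational point at infinity in the direction of each parallel pair. If $Q$ is not a trapezoid, the quadratic form has no repeated rational factor of that type.

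For the centroid statement, we translate the centroid $g$ to the origin and show $J(F,G)(g+x)=J(F,G)(g-x)$, i.e.\ that $J$ has no linear terms in these coordinates. The cleanest route is Theorem~\ref{special pair characterizations}(1) together with Theorem~\ref{bisector locus}. When $Q$ has no triple side, the bisection points of each bisector pair are symmetric about the centroid, and both lie on $C_Q$. Hence the $\k$-locus is symmetric about $g$. Over $\overline{\k}$ (base change preserves everything), the locus is Zariski dense in the conic, so $C_Q$ is centrally symmetric about $g$ and $g$ is a center. For a triple-side quadrangle we compute directly, which we do below.

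For the classification we split by Proposition~\ref{unpack}. In the triple-side case we take $T:Y=0$ and $v=(0,1)$. Then $\mathcal V(Q)=Y\cdot U_v$ with basis $F=XY$, $G=Y(Y-1)$, and
\[
J=Y\cdot(2Y-1)-X\cdot 0\cdot\ldots=Y(2Y-1)
\]
after a short computation. This is a pair of parallel lines, $Y=0$ and $Y=1/2$. Their midline $Y=1/4$ passes through the centroid, whose $Y$-coordinate is $1/4$. Conversely, if $Q$ has no triple side, Theorem~\ref{weird}(2) gives each bisector a unique finite bisection point. We must rule out $C_Q$ being a parallel pair; this is the main obstacle. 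The plan is to show that if $C_Q$ were a parallel pair or a double line, then some line would consist of bisection points. The natural target is to find a bisector with a whole line of bisection points, which contradicts Theorem~\ref{weird}(2). Alternatively we argue via symmetry: the centroid is a center, and each point $m$ of the locus has a partner $2g-m$. We expect it is easier simply to normalize coordinates in the remaining cases and compute the discriminant of the $3\times3$ matrix. For nondegenerate $Q$ we use vertices $(0,0),(1,0),(0,1),(a,b)$. Then $\det$ vanishes exactly when $(a,b)$ makes $Q$ a trapezoid, via factors like $a$, $b$, $a+b-1$ times the parallel conditions. We do the analogous computation in the double-side and quadruple-side normal forms. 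This handles both directions of the trapezoid equivalence simultaneously. Degenerate versus nondegenerate then combines with the count of rational points at infinity to give ellipse or hyperbola in the remaining cases.
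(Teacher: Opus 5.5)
\textbf{There is a genuine gap}, and it sits at the step you yourself flag as the main obstacle. Nothing in your plan shows that, for a quadrangle without a triple side, the quadratic part $J_2$ of $J(F,G)$ has nonzero discriminant. The statement needs exactly this beyond degeneracy:
\begin{enumerate}
\item[(i)] it rules out a pair of parallel lines or a double line, which you need for ``parallel pair $\Rightarrow$ triple side'' and for ``trapezoid $\Rightarrow$ degenerate \emph{hyperbola}'';
\item[(ii)] it rules out a parabola, which you need for ``otherwise an ellipse or a hyperbola''.
\end{enumerate}
Your proposed tools do not supply it.
\begin{itemize}
\item The $3\times3$ determinant only detects degeneracy. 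It vanishes for intersecting and parallel line pairs alike.
\item The heuristic through Theorem~\ref{weird}(2) fails. That result concerns a single bisector, whereas a line contained in the locus consists of bisection points of many different bisectors. This happens for every trapezoid, whose locus contains two lines.
\item The ``count of rational points at infinity'' cannot come from your first paragraph, because the identification made there is false. A $\k$-root of $J_2$ gives a member of $\mathcal P(Q)$ whose quadratic part is a square. That member can be a parabola rather than a parallel line pair, so it need not come from $\mathcal B(Q)$. For example, over $\mathbb R$ the vertices $(0,0),(1,0),(0,1),(2,3)$ form a non-trapezoid. Yet $J_2$ is proportional to $3X^2-6XY+Y^2$, which has two real roots (the axis directions of the two parabolas through the vertices), and $C_Q$ is a hyperbola.
\end{itemize}
Only the trapezoid half of your dichotomy is correct and usable: a parallel pair of opposite sides does give a $\k$-root of $J_2$.

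\textbf{The repair fits your computational plan.}
\begin{itemize}
\item Nondegenerate case. Take vertices $(0,0),(1,0),(0,1),(a,b)$, with $F,G$ line-pair polynomials of $\{L_{12},L_{34}\}$ and $\{L_{13},L_{24}\}$. Up to a nonzero scalar, $J=-2b(b-1)X^2+4abXY-2a(a-1)Y^2+(\text{terms of degree}\le 1)$. So $\operatorname{disc}(J_2)$ is a nonzero multiple of $ab(a+b-1)$, which is nonzero exactly because $Q$ is nondegenerate. Your determinant does come out as a nonzero constant times $ab(a+b-1)(a-1)(b-1)(a+b)$, as you expected.
\item Double-side case. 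Take $v_1=v_2=(0,0)$, $v_3=(1,0)$, $v_4=(0,1)$ and $L_{12}\colon Y=tX$ with $t\neq0$. Then $J=2tX^2+2Y^2-tX-Y$. The discriminant of its quadratic part is a nonzero multiple of $t$, and its determinant is a nonzero multiple of $t(t+1)$. The value $t=-1$ is exactly the trapezoid case.
\item Quadruple-side case. This quadrangle is unique up to affine maps and gives $J=-2Y(2X+Y-1)$.
\end{itemize}
Once $\operatorname{disc}(J_2)\neq0$ is known, the conclusions follow. The parallel direction of a trapezoid is one $\k$-root of $J_2$, so the other root is a distinct $\k$-root and $C_Q$ is a degenerate hyperbola. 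A nondegenerate $C_Q$ has zero or two $\k$-points at infinity.

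\textbf{The rest of your plan is sound.} With this addition, your route is a valid, self-contained alternative to the paper's. The paper instead passes to the complete quadrilateral $Q_0$ and reads everything off the standard form $-2\mu\bigl(Y^2-2kY-\mu(X^2-2hX)\bigr)$ from \cite{OWQuad}. There the quadratic part $Y^2-\mu X^2$ with $\mu\neq0$ settles the discriminant issue automatically.

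Your triple-side computation is correct. Your symmetry argument for the centroid works, but make its last step precise. Equal zero sets over $\overline{\k}$ give $J(2g-x)=cJ(x)$ by the Nullstellensatz, with the double-line case checked separately. Comparing quadratic parts then forces $c=1$, hence $\nabla J(g)=0$.
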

  
\begin{proof}
Suppose first that $Q$ has no triple side.  Let $Q_0$ be the {complete quadrilateral} constructed in the proof of Theorem~\ref{weird}, let $\AA$ and $\BB$ be its pairs of opposite sides, and let $F_{\AA}$ and $F_{\BB}$ be line-pair polynomials for them. {{By the construction in that proof,}
$$\mathcal V(Q)\oplus\k\mathbin{\cdot}1 =\operatorname{span}\{1,F_{\AA},F_{\BB}\}.$$
Since adding constants does not change gradients, the definition of $C_Q$ and Proposition~\ref{center locus properties} {imply} $C_Q=[J(F_{\AA},F_{\BB})].$
This is also the conic associated in \cite{OWQuad} with the bisection points of $Q_0$. {To verify that $C_Q$ is the conic defined in \cite[Theorem~5.1]{OWQuad}, use the reduction to standard form in the proof of \cite[Theorem~6.4]{OWQuad}. This reduction preserves the pencil up to affine change of coordinates. {The pencil generators in equation~(4) there are} $$F=\mu X^2+Y^2-4\mu hX-4kY+v,\quad G=\mu XY,$$
where $\mu\in\k^\times$, $v\in\k$, and $(h,k)$ is the centroid in these coordinates. Direct differentiation {shows that}
$$J(F,G)=-2\mu\bigl(Y^2-2kY-\mu(X^2-2hX)\bigr).$$ The expression in parentheses is the defining polynomial in the proof of \cite[Theorem~5.1]{OWQuad}. Since $\mu\ne0$, $J(F,G)$ defines that same conic. To pass back to the original coordinates, write an affine change of coordinates as $\phi(x)=Mx+b$, with $M$ invertible. {By the chain rule,} $\nabla(H\circ\phi)(x)=M^{\mathsf T}\nabla H(\phi(x))$ for every polynomial $H$. {On applying} this to $F$ and $G$ and taking determinants{, we have}
$$J(F\circ\phi,G\circ\phi)=\det(M)\,\bigl(J(F,G)\circ\phi\bigr).$$
Thus computing the Jacobian before or after changing coordinates produces the same conic. Together with basis independence, this shows that $J(F_{\AA},F_{\BB})$ is a nonzero scalar multiple of the defining polynomial for the conic associated to $Q_0$ in \cite[Theorem~5.1]{OWQuad}, with both polynomials expressed in the original coordinates.}}

The proof of Theorem~\ref{pairing structure}(1) also shows that $Q$ and $Q_0$ have the same centroid.  By \cite[Theorem~5.1]{OWQuad}, this conic is centered at the centroid of $Q_0$, so the centroid of $Q$ is a center of their common locus conic. This conic is either a nondegenerate ellipse or a hyperbola according to its points at infinity. To determine when the common locus conic is degenerate, first note that Theorem~\ref{pairing structure}(2) allows a repeated parallel bisector pair to be replaced by a pair of distinct parallel bisectors. By \cite[Proposition~3.2(3) and Theorem~5.2]{OWQuad}, the field contains such a pair exactly when $Q_0$ has parallel sides or diagonals, equivalently when the locus conic is degenerate. Theorem~\ref{pairing structure}(2) shows this is equivalent to $Q$ being a trapezoid. Thus the locus conic of a quadrangle with no triple side is a degenerate hyperbola exactly when the quadrangle is a trapezoid. Otherwise it is a nondegenerate ellipse or a nondegenerate hyperbola.
  
If $Q$ is a triple-side quadrangle, let $T$ be its triple side and $v$ the vertex off $T$. Each pair of opposite sides consists of $T$ and a non-triple side that meets $T$, so $Q$ is not a trapezoid. By Lemma~\ref{triangle bf}, each point on $T$ is a {bisection point} for $T$ as a bisector and hence is on the locus of $Q$, while the other lines parallel to $T$ have no {finite bisection points}. A bisector through $v$ that is not parallel to $T$ has as {its bisection point} the midpoint of $v$ and its intersection with $T$. These points all lie on  the line halfway between $v$ and $T$. Thus the locus of $Q$ is the union of this line and $T$, which are distinct because $v\notin T$. A defining polynomial of the locus conic $C_Q$ vanishes identically on both lines since each contains at least three $\k$-points. The product of their equations therefore divides this degree-two polynomial, so the locus conic of a triple-side quadrangle is a pair of distinct parallel lines.  Since the other three vertices lie on $T$, the centroid of $Q$ lies midway between these two lines and hence is a center of $C_Q$.
\end{proof} 

In general, two quadratic polynomials that are not scalar multiples of each other can have the same zero set in $\k^2$. The next theorem shows that such behavior does not occur for the locus conic of a quadrangle, and so the locus of $Q$ determines~$C_Q$. 
 
\begin{theorem}\label{locus determines equation}
The locus conic  of a quadrangle $Q$ is the unique conic $C$ such that $C(\k)$ is the locus of $Q$.
\end{theorem}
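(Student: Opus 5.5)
To prove Theorem~\ref{locus determines equation}, we will show that if $C=[K]$ is any conic with $C(\k)=C_Q(\k)$, then $K$ is a scalar multiple of $J=J(F,G)$. The tool is the standard fact that a nonzero polynomial of degree at most~$2$ vanishing on a line at three or more $\k$-points (or on an irreducible conic at five or more $\k$-points) is divisible by that line's (conic's) equation. The plan is therefore to exhibit enough $\k$-points on $C_Q$ and then split into cases according to Theorem~\ref{locus}. We will use $|\k|\ge 3$, since $\operatorname{char}\k\ne2$, and produce the needed points from explicit bisectors.

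First, the degenerate cases. If $Q$ is a triple-side quadrangle, the proof of Theorem~\ref{locus} shows that the locus is the union of two distinct parallel lines $T$ and $T'$, and each contains at least three $\k$-points. Then $K$ vanishes on both, so $F_TF_{T'}$ divides $K$. Since $\deg K\le 2$, we get $K=cF_TF_{T'}$, which is a multiple of $J$. If $Q$ is a trapezoid, $C_Q$ is a degenerate hyperbola, hence a pair of distinct nonparallel lines over $\k$ (its points at infinity are $\k$-rational and distinct). We must check that each line carries at least three $\k$-points of the locus, and here we use bisectors: by Theorem~\ref{pairing structure}(2), every line parallel to a pair of parallel opposite sides is a bisector. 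Its finite bisection point moves along one of the lines as the parallel line varies, giving as many points as $\k$ has elements, and at least three. The other component line we identify as the reflection of this one through the centroid, using Theorem~\ref{pairing structure}(1) and the fact that the centroid is the midpoint of the bisection points in each bisector pair. It then contains equally many points. The divisibility argument then gives $K=cJ$.

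The main case is when $C_Q$ is a nondegenerate ellipse or hyperbola. Here we need at least five $\k$-points on $C_Q$, with $C_Q$ absolutely irreducible. Then $K$ and $J$ share five points, no four collinear on a nondegenerate conic, and the conic through five such points is unique, so $[K]=[J]$. To get the points: for every $\k$-rational direction $w$, the conic $[H]$ in $\mathcal P(Q)$ chosen with $D_wH$ having a suitable common zero yields a bisector in direction $w$. In the nontrapezoid case, Theorem~\ref{weird} shows it has a unique finite bisection point. Distinct directions give distinct bisectors, and a given point can be the bisection point of at most one bisector by Theorem~\ref{pairing structure}(3), unless $Q$ is a parallelogram, and a parallelogram is a trapezoid, which we have already handled. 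This gives an injection from $\P^1(\k)$ into $C_Q(\k)$, so there are at least $|\k|+1\ge 4$ points.

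The obstacle is $\k=\mathbb F_3$, where this count yields only four points. We would then add the points where $C_Q$ meets further bisectors, or the six side midpoints and diagonal points of the nine-point conic. Alternatively, a smooth conic with a $\k$-point has exactly $|\k|+1$ points, so two distinct such conics share at most four. In that case we instead argue with the pencil spanned by $K$ and $J$, showing that $K$ must be singular or a multiple of $J$, and then rule out the singular case by the point count on lines.
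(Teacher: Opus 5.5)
Your degenerate cases are fine. In the trapezoid case you do not even need bisectors: $J$ is a product of two $\k$-rational linear factors, so each component line lies in $V(J)$ and contributes $|\k|\ge 3$ points.

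The gap is in the main case. There you claim that every $\k$-rational direction $w$ carries a bisector, which would give an injection $\P^1(\k)\to C_Q(\k)$. This is false. Write $\nabla F(m)=2A_Fm+b_F$ and $\nabla G(m)=2A_Gm+b_G$ with $A_F,A_G$ symmetric. The linear system $D_wF(m)=D_wG(m)=0$ in $m$ has coefficient determinant $4\det(A_Fw,A_Gw)=J(F,G)_2(w)$. So the system is singular exactly when $w$ is a $\k$-rational point at infinity of $C_Q$.

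For a nontrapezoid, the system is then inconsistent. Otherwise some $0\ne H\in\mathcal V(Q)$ would satisfy $D_wH\equiv0$. Since $H$ vanishes at the vertices, it splits over $\k$, so $[H]$ would be a parallel pair in $\mathcal B(Q)$, contrary to Theorem~\ref{pairing structure}(2). Hence the bisection-point map is a bijection from $\P^1(\k)$ \emph{minus} the rational points at infinity of $C_Q$ onto $C_Q(\k)$. This gives $|\k|+1$ points for an ellipse but only $|\k|-1$ for a hyperbola.

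This hides a genuine exceptional case that you never treat: a hyperbolic locus over $\mathbb F_5$ with exactly four points. Take $v_1=v_2=(0,0)$, $v_3=(2,0)$, $v_4=(0,2)$, and $L_{12}\colon Y=X$. Then $J(XY,(Y-X)(X+Y-2))=2(X^2-X+Y^2-Y)$, whose $\k$-points are only the four side midpoints $(0,0),(1,0),(0,1),(1,1)$, and no bisector has direction $(1,2)$.

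Your five-point argument has nothing to work with here, and your pencil fallback does not close the case by counting. The conics through these four points are $a(X^2-X)+c(Y^2-Y)$, and over $\mathbb F_5$ this pencil contains two nonsingular members besides $[J]$, namely $a=2,3$ with $c=1$. So $K$ need not be singular or proportional to $J$ for general reasons. One must check explicitly that these two conics have extra zeros, $(2,3)$ and $(3,2)$ respectively. Over $\mathbb F_3$ your pencil idea does work, since the pencil has only four members and three of them are line pairs; but ``adding further points'' is impossible there, because $C_Q(\k)$ has only four points.

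The paper isolates these cases cleanly. A nondegenerate $Q$ already puts at least five distinct side midpoints on $C_Q$. So a nondegenerate $C_Q$ with at most four points forces $\k$ finite and $Q$ to have a single double vertex. After normalizing coordinates as above, the count $|C_Q(\k)|=s\pm1=4$ leaves only the ellipse over $\mathbb F_3$ and the hyperbola over $\mathbb F_5$. Both are settled by the explicit check on the pencil $a(X^2-X)+c(Y^2-Y)$.
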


\begin{proof}
Let $Q$ be a quadrangle, let $F,G$ be a basis of $\mathcal V(Q)$, and set $J=J(F,G)$. By Proposition~\ref{center locus properties} and Theorem~\ref{bisector locus}, the zero set of $J$ is the locus of $Q$ and $C_Q=[J]$. Let $0\ne H\in\k[X,Y]$ have degree at most~$2$ and the same zero set as $J$. We show $[H] =[J]$. If $J$ is degenerate, then its zero set is the union of two distinct lines defined over $\k$, and since $|\k|\geq3$, the restriction of $H$ to either line has more zeroes than its degree and therefore vanishes identically. Thus both line equations divide $H$, so since $H$ is nonzero and has degree at most~$2$,  $[H] = [J].$

Now suppose that $J$ is nondegenerate. Write $H^h$ and $J^h$ for the degree-two homogenizations of $H$ and $J$. If $V(J)$ has more than four points, then the projective conics defined by $H^h$ and $J^h$ have more than four common points in $\P^2(\overline{\k})$. Bézout's theorem then implies $H^h$ and $J^h$ have a common component. Since $J^h$ is irreducible, the two homogenizations are proportional, with proportionality scalar in $\k$, and hence $[H]=[J]$. We may therefore assume that $V(J)$ has at most four points.

Every side midpoint of $Q$ belongs to $V(J)$ by Theorem~\ref{bisector locus}, so $V(J)$ is nonempty. Hence $\k$ must be finite, since a nondegenerate conic over an infinite field with a rational point has infinitely many rational points. Moreover, $Q$ cannot have four distinct vertices, since it would then have at least five distinct side midpoints. 
 Thus $Q$ has a repeated vertex. Since its locus conic is nondegenerate, Theorem~\ref{locus} shows that $Q$ has no triple side and is not a trapezoid. Thus Proposition~\ref{unpack}   implies that the vertices consist of one double vertex $p$ and two further points $q,r$ not collinear with $p$.

After an affine change of coordinates, we may assume $p=(0,0)$, $q=(2,0)$, and $r=(0,2)$. The four distinct side midpoints are then $(0,0), (1,0), (0,1), (1,1).$ Since $V(J)$ has four elements, the set of these four points is $V(J)$. Let  $s=|\k|$. A nondegenerate projective conic with a $\k$-point has ${s}+1$ projective $\k$-points, so Theorem~\ref{locus} shows that $|V(J)|$ is $s+1$ in the elliptic case and ${s}-1$ in the hyperbolic case. Since $|V(J)|=4$, the only possibilities are an ellipse over $\mathbb F_3$ and a hyperbola over $\mathbb F_5$.

Evaluation at the four specific points in $V(J)$ shows that every polynomial of degree at most~$2$ vanishing there has the form $$K=a(X^2-X)+c(Y^2-Y).$$
If the zero set of $K$ consists exactly of these four points, then $a,c\ne0$. After rescaling, assume $c=1$. Over $\mathbb F_3$, if $a=-1$, then $(2,2)$ is an additional zero, so $a=1$. Over $\mathbb F_5$, {$(2,3),(3,2),(3,3)$ are additional zeroes for $a=2,3,4$, respectively}, so again $a=1$.  Applying this conclusion to both $H$ and $J$, each is proportional to $X^2-X+Y^2-Y$. Hence $[H]=[J]$.
\end{proof}

{For real quadrangles, families sharing a nine-point conic were studied in the cyclic case by Sliep{\accent"14 c}evi\'{c} \cite[Theorem~5]{Sliepcevic} and, more generally, by {\accent"14 C}erin and Gianella \cite[Theorem~5]{CerinGianella}. Sliep{\accent"14 c}evi\'{c}'s construction fixes a pair of opposite sides and varies the radius of a circle with fixed center. The resulting quadrangles are (in our terminology) entangled since their pencils are generated by the fixed line pair and circle equations differing only by constants, {so their nets coincide after adjoining constants}.}

To conclude, we collect the characterizations of entanglement established so far, along with an additional characterization showing that the locus and one nonparallel bisector pair suffice to reconstruct the net of a quadrangle, and hence the entire bisector field.

\begin{theorem}\label{locus determines}
The following are equivalent for two quadrangles $Q_1$ and $Q_2$.
\begin{enumerate}
\item[$(1)$] The quadrangles are entangled.
\item[$(2)$] Every side of $Q_1$ bisects $Q_2$ at its side midpoint.
\item[$(3)$] Every side of $Q_2$ bisects $Q_1$ at its side midpoint.
\item[$(4)$] $\mathcal B(Q_1)=\mathcal B(Q_2)$.
\item[$(5)$] The quadrangles have the same locus, and they share a bisector pair whose lines are not parallel.
\end{enumerate}
\end{theorem}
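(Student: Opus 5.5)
The plan is to obtain most of the equivalences from earlier results and to prove the new equivalence with~(5) through a Jacobian computation in adapted coordinates.

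\textbf{Equivalences already available.} Corollary~\ref{entanglement bisection} gives $(1)\Leftrightarrow(2)$. Applying it with $Q_1$ and $Q_2$ interchanged gives $(1)\Leftrightarrow(3)$. Theorem~\ref{nice} gives $(1)\Leftrightarrow(4)$. So the real work is $(1)\Leftrightarrow(5)$.

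\textbf{$(1)\Rightarrow(5)$.} Set $\mathcal U_i=\mathcal V(Q_i)\oplus\k\cdot 1$.
\begin{itemize}
\item First, $C_{Q_i}$ is determined by $\mathcal U_i$. Since constants have zero gradient, any $H,K\in\mathcal U_i$ satisfy $J(H,K)=J(H',K')$, where $H',K'$ are the components in $\mathcal V(Q_i)$. Hence $C_{Q_i}=[J(H,K)]$ for any $H,K\in\mathcal U_i$ with $J(H,K)\neq0$, by the basis-independence in Proposition~\ref{center locus properties}.
\item So $\mathcal U_1=\mathcal U_2$ forces equal locus conics, and in particular equal loci.
\item For the shared pair, every pair of opposite sides of $Q_1$ lies in $\mathcal B(Q_1)=\mathcal B(Q_2)$. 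It remains to see that $Q_1$ has a nonparallel pair of opposite sides. In the triple-side case, every pair $\{T,L_i\}$ is nonparallel. Otherwise, suppose all three pairs were parallel. The nondegenerate triangle forces three distinct directions, so the quadrangle is nondegenerate with $L_{12}\parallel L_{34}$, $L_{13}\parallel L_{24}$ and $L_{14}\parallel L_{23}$. This is the Fano configuration: the first two conditions make $v_1v_2v_4v_3$ a parallelogram, and then the diagonals cannot be parallel unless $2=0$.
\end{itemize}
I expect this small case check to be routine but fiddly.

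\textbf{$(5)\Rightarrow(1)$, the main step.}
\begin{itemize}
\item By Theorem~\ref{locus determines equation}, equal loci give $C_{Q_1}=C_{Q_2}$.
\item Let $\mathsf L$ be the shared nonparallel bisector pair. Its lines meet in a point; after an affine change of coordinates, which is harmless by the identity $J(F\circ\phi,G\circ\phi)=\det(M)\,(J(F,G)\circ\phi)$, take that point to be the origin and $F_{\mathsf L}=XY$.
\item Since $[XY]\in\mathcal N(Q_i)$, we can write $\mathcal U_i=\operatorname{span}\{1,XY,G_i\}$ for some $G_i$.
\item Because $XY+c_i\in\mathcal V(Q_i)$ for a suitable constant $c_i$, and $J$ is unchanged by adding constants, we get $C_{Q_i}=[J(XY,G_i)]$. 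In particular $J(XY,G_i)\neq0$.
\item So $J(XY,G_2)=\lambda J(XY,G_1)$ for some $\lambda\neq0$.
\end{itemize}

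\textbf{The key computation.} Compute $J(XY,G)=YG_Y-XG_X$ on monomials:
\[
J(XY,X^iY^j)=(j-i)X^iY^j .
\]
For $i+j\le2$ the factor $j-i$ lies in $\{0,\pm1,\pm2\}$ and vanishes only for $1$ and $XY$. Since $\operatorname{char}\k\neq2$, this shows that $G\mapsto J(XY,G)$, on polynomials of degree at most $2$, has kernel exactly $\operatorname{span}\{1,XY\}$. Therefore $G_2-\lambda G_1\in\operatorname{span}\{1,XY\}$, which gives $\mathcal U_2\subseteq\mathcal U_1$. Equality then follows from dimension~$3$, which proves entanglement.

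The step I expect to be the main obstacle is this kernel computation. It is exactly where the nonparallel hypothesis enters: for a parallel pair, normalized to $F=X^2$, the kernel of $G\mapsto J(F,G)$ would also contain $X$, so the argument would fail.
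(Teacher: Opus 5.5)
Your proposal is correct and follows essentially the same route as the paper. It gets (1)--(4) from Corollary~\ref{entanglement bisection} and Theorem~\ref{nice}. For (1)$\Rightarrow$(5) it uses basis-independence of the Jacobian modulo constants together with the existence of a nonparallel pair of opposite sides; your parallelogram-diagonal determinant $2\det(u,w)$ is the paper's coordinate check in different dress. For (5)$\Rightarrow$(1) it normalizes the shared pair to $XY$ and uses $\ker\bigl(G\mapsto J(XY,G)\bigr)=\operatorname{span}\{1,XY\}$.

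The differences are cosmetic: you keep a scalar $\lambda$ where the paper rescales $G_2$. You should also cite Theorem~\ref{line-pair net members} explicitly wherever you pass between membership in $\mathcal B(Q_i)$ and being a bisector pair.
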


\begin{proof}
The equivalence of~(1), (2), and~(3) follows from Corollary~\ref{entanglement bisection} and the symmetry of equality of nets, while Theorem~\ref{nice} shows~(1) and~(4) are equivalent. It remains to prove the equivalence of~(1) and~(5). Suppose first that~(1) holds. Choose bases $F_i,G_i$ of $\mathcal V(Q_i)$. Since the quadrangles have the same net, the two bases differ, modulo constants, by an invertible linear change of basis. Since constants do not affect gradients,  the calculation in the proof of Proposition~\ref{center locus properties} shows that $J(F_1,G_1)$ and $J(F_2,G_2)$ are nonzero scalar multiples. {Thus the quadrangles have the same locus conic and hence the same locus.} By the equivalence of~(1) and~(4), $\mathcal B(Q_1)=\mathcal B(Q_2)$.

We next show that $Q_1$ has a nonparallel pair of opposite sides. If $Q_1$ is degenerate, Proposition~\ref{unpack} {ensures that such a pair exists}. Suppose that $Q_1$ is nondegenerate. After an affine change of coordinates, we can assume $v_1=(0,0)$, $v_2=(1,0)$, and $v_3=(0,1)$. If either $\{L_{12},L_{34}\}$ or $\{L_{13},L_{24}\}$ is nonparallel, there is nothing to prove. Otherwise, the fact that  $L_{12}$ and $L_{34}$ are parallel implies $v_4 \in (0,1)+\k(1,0)$, while $L_{13}$ and $L_{24}$ being parallel implies  $v_4 \in (1,0)+\k(0,1)$. Hence $v_4=(1,1)$. The directions of $L_{14}$ and $L_{23}$ are then $(1,1)$ and $(-1,1)$. Thus $\{L_{14},L_{23}\}$ is nonparallel. Let $\AA$ be the resulting pair of nonparallel opposite sides. Since $\AA$ is a pair of opposite sides of $Q_1$, it belongs to $\mathcal B(Q_1)=\mathcal B(Q_2)$. Theorem~\ref{line-pair net members} therefore shows that $\AA$ is a bisector pair of both quadrangles. Thus~(5) holds.

Conversely, suppose~(5) holds. Let $\AA$ be a shared nonparallel bisector pair, and choose a line-pair polynomial $F$ for $\AA$. By Theorem~\ref{locus determines equation}, the two quadrangles $Q_1$ and $Q_2$ have the same locus conic. Choose polynomials $G_i$ such that
$\mathcal V(Q_i)+\k=\operatorname{span}\{1,F,G_i\}$
for $i=1,2$. There are scalars $\rho_{F,i},\rho_{G,i}\in\k$ such that $F-\rho_{F,i}$ and $G_i-\rho_{G,i}$ form a basis of $\mathcal V(Q_i)$. Constants do not affect the Jacobian, so {Proposition~\ref{center locus properties} shows that both $[J(F,G_1)]$ and $[J(F,G_2)]$ are the common locus conic}. After rescaling $G_2$, we may therefore assume $J(F,G_1)=J(F,G_2).$
After an affine change of coordinates that sends the lines in $\AA$ to the coordinate axes, we can assume $F=XY$. If $$H=aX^2+bXY+cY^2+rX+sY+t,$$ then $$J(XY,H)=2cY^2-2aX^2+sY-rX.$$
Consequently, $J(XY,H)=0$ if and only if $H\in\operatorname{span}\{1,XY\}.$
By linearity of the Jacobian   in its second argument, $J(F,G_1-G_2)=J(F,G_1)-J(F,G_2)=0.$
Thus $G_1-G_2\in\operatorname{span}\{1,F\}$. Hence $\mathcal V(Q_1)+\k=\mathcal V(Q_2)+\k$, and so $\mathcal N(Q_1)=\mathcal N(Q_2)$.  \end{proof}

\begin{example}
The requirement in Theorem~\ref{locus determines} that the shared pair be nonparallel cannot be omitted. For example, over $\mathbb R$, let $$\begin{aligned} Q_{\rm p}&=\{(-1,-1),(1,-1),(-1,1),(1,1)\},\\ Q_{\rm t}&=\{(-2,-1),(2,-1),(-1,1),(1,1)\}. \end{aligned}$$
The first four-point configuration is a parallelogram and the second is a nonparallelogram trapezoid, and they share the pair of parallel opposite sides defined by $F=Y^2-1$. For the respective pairs of opposite sides, take $$G_{\rm p}=X^2-1 \quad\text{and}\quad G_{\rm t}=(2X-Y+3)(2X+Y-3)=4X^2-Y^2+6Y-9.$$
{Their locus conics are represented by} $J(F,G_{\rm p})=-4XY$ and $J(F,G_{\rm t})=-16XY,$
so the two quadrangles have the same locus conic and hence the same locus. They are not entangled, however, since the fact that $G_{\rm t}$ has the nonzero linear term $6Y$ implies
$\operatorname{span}\{1,F,G_{\rm p}\} =\operatorname{span}\{1,X^2,Y^2\} \ne\operatorname{span}\{1,F,G_{\rm t}\}.$
\end{example}

{\it Funding.} The first author was supported by the National Science Foundation under grant DMS-2231414.

\smallskip

{\it Competing interests.} The authors declare no competing interests.
  
 \smallskip
 
 {\it Data availability.} No datasets were generated or analyzed during the current study.

\smallskip

{\it Technology and AI use.} During the preparation of this manuscript, the authors used ChatGPT   for checking and simplifying arguments. All mathematical   proofs and results however are the authors' own. Maple was used to assist with and check calculations.


\vspace{-.1in}


 

  \begin{thebibliography}{99}
  
  
  
  
  
  
  
  
  
  
  
\bibitem{BambergPenttila} J.~Bamberg and T.~Penttila, {\it Analytic Projective Geometry}, Cambridge University Press, Cambridge, 2023.

\bibitem{Casey} J.~Casey, {\it A Treatise on the Analytical Geometry of the Point, Line, Circle, and Conic Sections}, 2nd ed., revised and enlarged, Hodges, Figgis, \& Co., Dublin; Longmans, Green, \& Co., London, 1893.


\bibitem{CerinLines} Z.~{\accent"14 C}erin, Lines with the butterfly property, Math. Commun. 8 (2003), 35--41.


\bibitem{CerinGianella} Z.~{\accent"14 C}erin and G.~M.~Gianella, On improvements of the Butterfly theorem, Far East J. Math. Sci. 20(1) (2006), 69--85.

\bibitem{Coxeter} H.~S.~M.~Coxeter, {\it Projective Geometry}, 2nd ed., University of Toronto Press, Toronto, 1974.


\bibitem{Del} A.~Del Centina, Brianchon and Poncelet's joint memoir, the nine-point circle, and beyond, Arch. Hist. Exact Sci., 76(4) (2022), 363--390.

\bibitem{DragovicRadnovic} V.~Dragovi\'c and M.~Radnovi\'c, \emph{Poncelet Porisms and Beyond: Integrable Billiards, Hyperelliptic Jacobians and Pencils of Quadrics}, Frontiers in Mathematics, Birkh\"auser, Basel, 2011.

\bibitem{GonLop1} A.~Gonz\'alez and J.~L.~L\'opez-L\'opez, Compactness of spaces of convex and simple quadrilaterals, Bull. Aust. Math. Soc. 94(3) (2016), 507--521.

\bibitem{GonLop2} A.~Gonz\'alez and J.~L.~L\'opez-L\'opez, Spaces of special quadrilaterals, Bull. Aust. Math. Soc. 100(1) (2019), 155--167.

\bibitem{Kendall} D.~G.~Kendall, Shape manifolds, Procrustean metrics, and complex projective spaces, Bull. Lond. Math. Soc. 16(2) (1984), 81--121.



\bibitem{Loucka} P.~Lou{\accent"14 c}ka, Construction of special conics in pencils of conics using geometric algebra for conics, Adv. Appl. Clifford Algebr. 36 (2026), Article 20.


\bibitem{OWQuad} B. Olberding and E. A. Walker, Bisector fields of quadrilaterals, Beitr. Algebra Geom. 66 (2025), 135--157.


\bibitem{OWPencil} B. Olberding and E. A. Walker, Bisector fields and pencils of conics, Aequat. Math. 99 (2025), 351--376.

\bibitem{OWProj} B. Olberding and E. A. Walker, Geometry of lines asymptotic to a pencil of conics, Aequat. Math. 99 (2025),  2423--2456.
 


\bibitem{OWManifold} B. Olberding and E. A. Walker, The Nash manifold of four-point configurations  modulo similarity subgroups, submitted.

\bibitem{Sliepcevic} {A.~Sliep{\accent"14 c}evi\'{c}, A new generalization of the Butterfly theorem, J. Geom. Graph. 6(1) (2002), 61--68.}

\bibitem{Vac} M. Vaccaro, Historical origins of the nine-point conic---the contribution of Eugenio Beltrami, Hist. Math. 51 (2020), 26--48.

\end{thebibliography}
\end{document}